\documentclass[psamsfonts,reqno]{amsart}

%-------Packages---------
\usepackage{amsmath}
\usepackage{amsfonts}
\usepackage{amssymb}
\usepackage{amssymb,amsfonts,calligra,verbatim,hyperref}
\usepackage[all,arc]{xy}
\usepackage{enumerate}
\usepackage{mathrsfs}
\usepackage{tikz-cd}
\usepackage{mathtools}
\usepackage{comment}

\usepackage{color}   %May be necessary if you want to color links
\usepackage{hyperref}
\hypersetup{
    colorlinks=true, %set true if you want colored links
    linktoc=all,     %set to all if you want both sections and subsections linked
    linkcolor=blue,  %choose some color if you want links to stand out
}

%--------Theorem Environments--------
%theoremstyle{plain} --- default
\newtheorem{thm}{Theorem}[section]

\newtheorem{prop}[thm]{Proposition}
\newtheorem{lem}[thm]{Lemma}

\theoremstyle{definition}
\newtheorem{defn}[thm]{Definition}

\theoremstyle{remark}
\newtheorem{rem}[thm]{Remark}

\newcommand{\spec}{\operatorname{Spec}}
\DeclareMathOperator{\Hom}{\mathscr{H}\text{\kern -3pt {\calligra\large om}}\,}

\begin{document}

\makeatletter
\let\c@equation\c@thm
\makeatother
\numberwithin{equation}{section}

%--------Meta Data: Fill in your info------
\title{Log Fundamental Group scheme}

\author{Aritra Sen}

\date{}

\begin{abstract}

In this article, we study the log-scheme theoretic version of the Nori fundamental group scheme. Similar to the classical Nori fundamental group scheme, the log Nori fundamental group scheme classifies torsors on log flat topology. We also compute the nilpotent pro-p quotient of the the log Nori fundamental group scheme with log at $\{0, 1, \infty\}$.

\end{abstract}

\maketitle

\tableofcontents

\section{Introduction}
Let $X$ be a connected, reduced scheme over a field $k$ and $x \in X(k)$ be rational point. Then Nori proved the following result in \cite{nori1982fundamental}.
\begin{thm}
\label{NoriFundamentalgrp}
 There exists a profinite group scheme, $\pi^{N}(X,x)$, the Nori fundamental group scheme, such that for any finite group scheme over $k$ there is a functorial isomorphism between $\operatorname{Hom_{grp.schm}}(\pi^{N}(X,x), G)$ and $\operatorname{Tor}(G,(X,x))$, where $\operatorname{Tor}(G,(X,x))$ is the set of isomorphism classes of $(T,\alpha)$ where $T$ is a $G$-torsor and $\alpha$ is a trivialization $\alpha : T|_{x} \cong G$.
\end{thm}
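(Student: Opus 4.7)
The plan is to construct $\pi^N(X,x)$ as a cofiltered inverse limit of finite group schemes arising from the category of pointed finite torsors on $X$. Let $\mathcal{T}$ denote the category whose objects are triples $(G, T, \alpha)$ consisting of a finite $k$-group scheme $G$, an fpqc $G$-torsor $T \to X$, and a trivialization $\alpha \colon T_x \xrightarrow{\sim} G$; a morphism $(G', T', \alpha') \to (G, T, \alpha)$ consists of a group homomorphism $f \colon G' \to G$ together with an $f$-equivariant $X$-morphism $T' \to T$ that carries $\alpha'$ to $\alpha$.

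The crucial step is to verify that $\mathcal{T}$ is essentially small and cofiltered. Given two objects, the fibered product $T_1 \times_X T_2$ is a $(G_1 \times G_2)$-torsor with canonical trivialization $(\alpha_1, \alpha_2)$ at $x$, supplying a common object mapping to both $(G_i, T_i, \alpha_i)$ via the projections. For the equalizer property I would invoke a rigidity principle: two parallel morphisms $(G', T', \alpha') \rightrightarrows (G, T, \alpha)$ of pointed torsors must coincide. Indeed, the locus in $T'$ on which two such $f$-equivariant morphisms agree is a $G'$-stable closed subscheme containing the marked fiber, so it descends to a closed subscheme of $X$ containing $x$; the hypotheses that $X$ is connected and reduced then force this to be all of $X$. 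Consequently there is at most one morphism between any two objects of $\mathcal{T}$, which trivially provides equalizers.

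I would then set $\pi^N(X,x) := \varprojlim_{(G,T,\alpha) \in \mathcal{T}} G$ as a profinite group scheme and $\widehat{X} := \varprojlim T$ as its universal pro-torsor, equipped with a canonical base point over $x$ coming from the trivializations. For the universal property, given a finite group scheme $H$ and a homomorphism $\varphi \colon \pi^N(X,x) \to H$, cofinality produces an object $(G, T, \alpha) \in \mathcal{T}$ through which $\varphi$ factors; the associated bundle $T \times^G H$ is then an $H$-torsor on $X$ with an induced trivialization at $x$. Conversely, a pointed $H$-torsor is itself an object of $\mathcal{T}$ and defines a projection $\pi^N(X,x) \twoheadrightarrow H$. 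A direct check shows these constructions are mutually inverse and functorial in $H$, which is the content of Theorem \ref{NoriFundamentalgrp}.

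The main obstacle is the rigidity step: showing that the locus of agreement of two $f$-equivariant pointed torsor morphisms propagates from the base point $x$ to all of $X$. This is precisely where reducedness and connectedness of $X$ are indispensable, since a priori one only gets agreement on a closed subscheme containing $x$. A secondary point, which is straightforward but must be noted, is that the underlying homomorphism $f \colon G' \to G$ of a morphism of pointed torsors is itself determined by the action on the marked fiber, so rigidity of $\varphi$ subsumes rigidity of the pair $(f, \varphi)$. This is also the step most sensitive to the topology one works in, and it will have to be reexamined in the log flat setting developed in the rest of the paper.
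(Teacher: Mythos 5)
Your overall architecture (take the cofiltered limit of the category of pointed finite torsors and check the universal property) is the standard one, and your binary-product step ($T_1\times_X T_2$ as a $G_1\times G_2$-torsor) is fine. But the step you yourself identify as crucial --- the ``rigidity principle'' that any two parallel morphisms of pointed torsors coincide, which is what you use to get equalizers for free --- is false, and this is a genuine gap. Take $G'=G=\mu_p$ and $T'=T$ the trivial pointed $\mu_p$-torsor: for every group scheme endomorphism $f$ of $\mu_p$ the pair $(f,\,f\times\mathrm{id}_X)$ is a morphism of pointed torsors, so this object has $p$ distinct endomorphisms. Your attempted repair (``$f$ is determined by the action on the marked fiber'') cuts the other way: it shows precisely that distinct homomorphisms $f_1\neq f_2$ yield distinct parallel morphisms, since the pointing only forces the fiber map over $x$ to equal $\alpha^{-1}\circ f_i\circ\alpha'$, not to be independent of $i$. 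Relatedly, the agreement locus of two equivariant maps is stable only under the equalizer of the two group homomorphisms, not under all of $G'$; and even in the equal-$f$ case your final step is too quick, since a closed subscheme of a connected reduced scheme containing $x$ need not be everything (e.g.\ $\{x\}\subset\mathbb{P}^1$) --- one must instead use that the ``difference'' section lands in the finite group scheme $G$ and that a map from a connected reduced scheme to a finite $k$-scheme sending a rational point to $e$ is constant.

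Because rigidity fails, equalizers must be constructed rather than deduced, and this is exactly where the real content of the theorem sits. The paper's route (following Nori, Propositions 1 and 2 of Chapter II, restated here as property $\mathcal{P}$ and Theorem \ref{thm::nori}) is to show that for morphisms $(G_i,T_i,t_i)\to(G,T,t)$ the triple $(G_1\times_G G_2,\,T_1\times_T T_2,\,t_1\times_t t_2)$ is again an object of $\mathcal{T}(X,x)$; the nontrivial point, and the place where reducedness and connectedness of $X$ are genuinely used, is that $T_1\times_T T_2\to X$ is still faithfully flat of the correct rank (one shows the relevant double-coset-valued invariant is locally constant, hence constant, and equals the identity coset because of the pointing at $x$). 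General fiber products over a common target, together with the terminal object $(1,X,x)$, give all finite limits and in particular equalizers, whence cofilteredness. Your proposal never proves this fiber-product statement --- it only handles products over the terminal object, which require no hypotheses on $X$ at all --- so as written it does not establish that the limit defining $\pi^N(X,x)$ exists.
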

 
 We also have following relationship

\begin{equation}
    \operatorname{Hom_{grp.schm}}( \pi^{N}(X,x), G )/ \sim \cong \operatorname{ker}( H^1_{fppf}(X,G) \to H^1_{fppf}(x,G))
\end{equation}

Here $\sim$ denotes conjugation by $G(k)$ and the kernel of the map between the pointed sets  $H^1_{fppf}(X,G) \to H^1_{fppf}(x,G)$ is the inverse image of distinguished element in the $H^1_{fppf}(x,G))$.

In this paper, we generalize the notion of Nori fundamental group schemes to log schemes. Let $X$ be a fine saturated log scheme which is log smooth over a field $k$ whose underlying scheme is connected. Then we define $\pi^{N}_{\log}(X)$ which classifies log flat torsors of $X$ over finite classical $k$-group schemes.

The following is the main theorem proved in this paper

\begin{thm}
\label{mainthm1}
Let $X$ be log smooth scheme over a field $k$ (the log structure on field $k$ is trivial) such that the underlying scheme of $X$ is connected and $x \in X(k)$. There exists a profinite group scheme, $\pi^{N}_{log}(X,x)$ such that for any finite group scheme over $k$ there is a functorial isomorphism between $Hom(\pi^{N}_{log}(X,x), G)$ and $\operatorname{Tor_{log}}(G,(X,x))$, where $\operatorname{Tor_{log}}(G,(X,x))$ is the isomorphism classes of $(T,\alpha)$ where $T$ is a $G$-sheaf torsor in the log flat topology of $X$ and $\alpha$ is a trivialization $\alpha : T|_{x} \cong G$.
\end{thm}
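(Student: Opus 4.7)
My plan is to adapt Nori's original construction from~\cite{nori1982fundamental} to the log setting, systematically replacing the fppf site with the Kato log flat site. Rather than building a Tannakian category of essentially finite log vector bundles (which would require a parallel theory of log flat vector bundles to be developed first), I would represent the functor directly, constructing $\pi^{N}_{\log}(X,x)$ as a projective limit of finite $k$-group schemes indexed by a cofiltered category of pointed log torsors.

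Concretely, let $\mathcal{C}(X,x)$ be the category whose objects are triples $(G,T,\alpha)$ with $G$ a finite $k$-group scheme, $T$ a log flat $G$-torsor over $X$, and $\alpha \colon x^{\ast}T \xrightarrow{\sim} G$ a trivialization at $x$ (here $x$ carries the trivial log structure from $k$, so $x^{\ast}T$ is a classical fppf $G$-torsor over $\spec k$). A morphism $(G_1,T_1,\alpha_1)\to(G_2,T_2,\alpha_2)$ is a pair $(\phi,f)$ with $\phi\colon G_1\to G_2$ a group homomorphism and $f\colon T_1\to T_2$ a $\phi$-equivariant map intertwining the trivializations. The first substantive step is to verify that $\mathcal{C}(X,x)$ is cofiltered. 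Products are immediate, since $(G_1\times_k G_2,\, T_1\times_X T_2,\, \alpha_1\times\alpha_2)$ again lies in $\mathcal{C}(X,x)$. To coequalize two parallel morphisms I would follow Nori's standard trick: the trivialization $\alpha_1$ pins down a canonical $k$-point of $T_1$, and the requirement that two parallel morphisms agree on this point cuts out a closed sub-torsor for a closed sub-group scheme of the target. Once cofilteredness is established, one restricts to the cofinal subcategory of \emph{minimal} or \emph{reduced} objects (those whose structure group cannot be further reduced) and defines
\[
\pi^{N}_{\log}(X,x) \;=\; \varprojlim_{(G,T,\alpha)} G .
\]
The functorial bijection between $\operatorname{Hom}(\pi^{N}_{\log}(X,x),G)$ and $\operatorname{Tor}_{\log}(G,(X,x))$ then follows formally: any homomorphism to $G$ factors through some term of the limit and produces a pointed log torsor by pushout of the universal torsor, while conversely any pointed log torsor tautologically defines an object of $\mathcal{C}(X,x)$ and hence a homomorphism to $G$.

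The principal obstacle will be verifying cofilteredness in the log flat context. In Nori's classical proof the crucial input is that the scheme-theoretic image of the distinguished point inside a product torsor is itself a torsor for a closed sub-group scheme, a statement that ultimately rests on fpqc descent. In the log setting one must upgrade this to descent in the log flat topology and, in addition, contend with the subtler interaction between closed sub-objects and log structures on fs log schemes, so as to ensure that the resulting sub-object remains fine and saturated and carries a natural log torsor structure for the chosen sub-group scheme. A secondary but related point is ensuring that the pullback $x^{\ast}$ from the log flat site of $X$ to the fppf site of $\spec k$ is well-defined on torsors despite $x\to X$ being non-strict in general. I expect both issues to be resolvable using the log flat descent theory of Kato and Illusie, but this is the step where the log-theoretic input is essential and where I anticipate the technical effort of the proof to concentrate.
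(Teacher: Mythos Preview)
Your overall architecture---build the category of pointed log torsors, show it is cofiltered, and take the projective limit of the structure groups---matches the paper's exactly. The divergence is entirely in how cofilteredness is established, and there your plan has a genuine gap.

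You propose to adapt Nori's classical argument directly: use the distinguished $k$-point to carve out a closed sub-torsor via a scheme-theoretic image, and rely on log flat descent to control the result. The difficulty is that, as the paper stresses in its Remark in Section~5, log flat torsors over finite group schemes are not known to be representable by fs log schemes; they are only sheaf torsors on $X_{\mathrm{fl}}^{\log}$. Consequently the notions you invoke---scheme-theoretic image, closed sub-log-scheme, preservation of the fs condition under passage to a sub-object---are not available in the generality you need. Your proposal also never explains where the \emph{log smoothness} hypothesis enters, yet it is indispensable: without it there is no reason the relevant fibre product $S_1\times_S S_2$ should be a $G_1\times_G G_2$-torsor.

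The paper sidesteps all of this by a reduction to the classical case rather than a direct log argument. Using log smoothness of $X$ over $k$ and the chart criterion, it constructs for each $n$ the Kummer cover $X_n = X\times^{\mathrm{fs}}_{\spec\mathbb{Z}[P]}\spec\mathbb{Z}[P^{1/n}]$, which is \emph{reduced} because $X_n\to\spec k[P^{1/n}]$ is \'etale (Proposition~\ref{redcov}). Any Kummer log flat cover $Y\to X$ trivializing the torsors $S_1,S_2,S$ is then dominated, after base change, by a \emph{classically} flat cover of $X_n$. Hence the restrictions of $S_1,S_2,S$ to $X_n$ are ordinary fppf torsors over a reduced connected scheme, and Nori's classical Theorem~\ref{thm::nori} gives that $S_1\times_S S_2$ is a $G_1\times_G G_2$-torsor there, hence on $X$. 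This is the missing idea: rather than proving a log analogue of Nori's sub-torsor lemma, one bootstraps from the classical lemma by passing to a suitable reduced Kummer cover manufactured from the log smooth chart.
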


Similar to classical case we get the following relationship.
\begin{equation}
    \operatorname{Hom_{grp.schm}}( \pi^{N}_{log}(X,x), G )/ \sim  \cong  \operatorname{ker}( H^1_{logfl}(X,G) \to H^1_{logfl}(x,G)).
\end{equation}

In the second part of the paper we prove the following
\begin{thm}
\label{mainthm2}
Let $k$ be an algebraically closed field of characteristic $p > 0$. Let $X= \mathbb{P}^{1}_k$ and $D = \{0, 1 , \infty \}$ . We endow $X$ with the log structure associated with $D$. Then
$$(\pi^{N}_{log}(X,x))_{p-nilp} \cong \underset{n}{\varprojlim} \ \mu_{p^n}\times \mu_{p^n}$$.

Here $(\pi^{N}_{log}(X,x))_{p-nilp}$ refers to the pro-p nilpotent quotient of $\pi^{N}_{log}(X,x)$ (See Definition \autoref{nilpotent}).
\end{thm}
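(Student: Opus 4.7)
By Theorem \ref{mainthm1} and the displayed formula following it, determining $(\pi^{N}_{log}(X,x))_{p-nilp}$ amounts to computing $\ker(H^1_{logfl}(X,G) \to H^1_{logfl}(x,G))$ for all finite $p$-nilpotent group schemes $G$ over $k$. Since $x$ carries the trivial log structure and $k$ is algebraically closed of characteristic $p$, one has $H^1_{logfl}(x,G) = 0$ for the basic commutative building blocks $\mu_{p^n}$ (as $k^* = (k^*)^{p^n}$), $\mathbb{Z}/p^n$ (as $k$ is separably closed), and $\alpha_{p^n}$ (as Frobenius is bijective on $k$), so the full group $H^1_{logfl}(X,-)$ contributes to the kernel.

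The core computation uses Kato's log flat Kummer sequence
\begin{equation*}
1 \to \mu_{p^n} \to M^{gp}_X \xrightarrow{\cdot p^n} M^{gp}_X \to 1.
\end{equation*}
The global sections are $M^{gp}(X) = k^* \oplus \mathbb{Z}\langle t\rangle \oplus \mathbb{Z}\langle t-1\rangle$, since a rational function on $\mathbb{P}^1$ with divisor supported on $\{0,1,\infty\}$ has the form $c \cdot t^a(t-1)^b$. Combined with the vanishing $H^1_{logfl}(X, M^{gp}_X) = 0$ (obtained from $0 \to \mathbb{G}_m \to M^{gp}_X \to \overline{M}^{gp}_X \to 0$, noting that $\overline{M}^{gp}_X(X) = \mathbb{Z}^3$ surjects onto $\mathrm{Pic}(\mathbb{P}^1) = \mathbb{Z}$ via the degree map), and using $(k^*)^{p^n} = k^*$, this yields $H^1_{logfl}(X, \mu_{p^n}) \cong (\mathbb{Z}/p^n)^2$. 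For $\mathbb{Z}/p^n$, which is étale, $H^1_{logfl}(X, \mathbb{Z}/p^n) = H^1_{et}(\mathbb{P}^1_k, \mathbb{Z}/p^n) = 0$. For $\alpha_p$ (and inductively $\alpha_{p^n}$), the Frobenius sequence $0 \to \alpha_p \to \mathbb{G}_a \xrightarrow{F} \mathbb{G}_a \to 0$ together with $H^1(\mathbb{P}^1, \mathcal{O}) = 0$ and surjectivity of $F$ on $k$ gives the vanishing. Hence among the commutative basic types only $\mu_{p^n}$ contributes, producing $\mathrm{Hom}(\pi^{N}_{log}(X,x), \mu_{p^n}) \cong (\mathbb{Z}/p^n)^2 \cong \mathrm{Hom}(\mu_{p^n} \times \mu_{p^n}, \mu_{p^n})$.

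The remaining step is to show that the pro-$p$ nilpotent quotient coincides with the pro-$p$ abelianization. I would argue by induction on nilpotency class: for a central extension $1 \to Z \to G \to G' \to 1$ with $Z$ an abelian $p$-group scheme, the obstruction to lifting a $G'$-torsor to a $G$-torsor on $X$ lives in $H^2_{logfl}(X,Z)$, and the fibre is a torsor under $H^1_{logfl}(X,Z)$. Vanishing of this $H^2$ for $Z$ among the basic building blocks follows from $H^2(\mathbb{P}^1,\mathcal{O}) = 0$, $H^2(\mathbb{P}^1, \mathbb{G}_m) = 0$, and the corresponding log flat vanishing for $M^{gp}_X$ deduced from the characteristic exact sequence together with the skyscraper structure of $\overline{M}^{gp}_X$. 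The main obstacle is precisely this $H^2_{logfl}$ vanishing, which requires careful invocation of Kato's cohomological results for log flat topology on toric log schemes; once established, the pro-$p$ nilpotent quotient is forced to be abelian, and by the $\mathrm{Hom}$ calculation above it equals $\varprojlim_n \mu_{p^n} \times \mu_{p^n}$.
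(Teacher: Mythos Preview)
Your computation of $H^1_{logfl}(X,G)$ for the basic abelian pieces is correct and matches what the paper needs: $H^1_{logfl}(X,\mu_{p^n})\cong(\mathbb{Z}/p^n)^2$ while $H^1_{logfl}(X,\alpha_p)=H^1_{logfl}(X,\mathbb{Z}/p)=0$. This does identify the abelianization of $(\pi^{N}_{log}(X,x))_{p\text{-nilp}}$ with $\varprojlim_n \mu_{p^n}\times\mu_{p^n}$.

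The gap is in your ``remaining step''. Vanishing of $H^2_{logfl}(X,Z)$ does \emph{not} force the pro-$p$ nilpotent quotient to be abelian. What $H^2=0$ gives you is that for a central extension $1\to Z\to G\to G'\to 1$ every $G'$-torsor \emph{lifts} to a $G$-torsor; it tells you nothing about whether some lift has full, non-abelian monodromy. Concretely, take the Heisenberg extension $1\to\mu_p\to H_p\to\mu_p\times\mu_p\to 1$. Your argument would need to show that no $H_p$-torsor on $X$ lying over the $\mu_p^2$-torsor $X_1$ has image all of $H_p$; knowing that the set of lifts is a nonempty $H^1_{logfl}(X,\mu_p)$-torsor does not decide this. (Indeed, the map $\varprojlim\mu_{p^n}^2\to\mu_p^2$ does \emph{not} lift to $H_p$, since the pulled-back extension still has nontrivial commutator pairing; so if $\Pi\to\mu_p^2$ \emph{did} lift to $H_p$, that would already prove $\Pi\neq\Pi^{ab}$.) You are also implicitly conflating $H^2_{logfl}(X,Z)$ with a group-cohomology $H^2$ of the fundamental group scheme; no $K(\pi,1)$ property in the log flat topology has been established here.

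The paper proceeds quite differently. It builds the map $f$ from the explicit Fermat-type torsors $X_n=\{x^{p^n}+y^{p^n}=z^{p^n}\}$, proves surjectivity by checking that the class of $X_n$ survives under every surjection $\mu_{p^n}^2\to\mu_p$, and proves injectivity by showing that a nontrivial $\ker f$ would produce a nonzero class in $H^1_{logfl}(X_n,H)$, for $H\in\{\alpha_p,\mu_p,\mathbb{Z}/p\}$, fixed by the $\mu_{p^n}^2$-action on $X_n$. The substantive work is computing these invariant subspaces: they vanish for $\alpha_p$ and $\mathbb{Z}/p$ (via $R^1\epsilon_*=0$ and an explicit monomial-weight argument in $H^1(X_n,\mathcal{O})$), and for $\mu_p$ they are exactly the $(\mathbb{Z}/p)^2$ coming from $H^1_{logfl}(X,\mu_p)$ (via an induction on $n$ using the nilpotent thickening $X_{n-1}\hookrightarrow X_n$). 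That invariants calculation is what replaces your unproved $H^2$ claim.
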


The original construction of fundamental group scheme by Nori was for a scheme over a field $k$.This construction was generalized to a scheme $X$ when it is a reduced flat scheme over a Dedekind scheme in \cite{gasbarri2003heights}. In \cite{borne2015nori}, the fundamental group scheme was generalized to the Nori fundamental gerbe of a fibered category. In Chapter 3 of \cite{nori1982fundamental}, it is proved that if $X$ is smooth, connected projective curve over an algebraically closed field of $k$ with a base point $x_0$ and let $S$ be a finite set of points not containing $x_0$ then the $\pi^{N}(X-S,x_0)$ is the group scheme associated with Tannakian category of parabolic sheaves on $X-S$. In \cite{borne2012parabolic}  it was shown how parabolic
sheaves on a scheme can be reinterpreted in terms of logarithmic geometry. 
\subsection{Structure of the paper}
The structure of the paper is as follows: in Sections \ref{sec2}, \ref{sec3} and \ref{sec4} we review some properties of the Nori fundamental group scheme, log flat topology and log smooth morphism respectively. In Section \ref{sec5} we discuss torsors in log flat topology. The Theorem \ref{mainthm1} follows from the Theorem \ref{torsorsfibreproduct} in Section \ref{sec6}. In Section \ref{sec7} we show that the log flat torsors of solvable group schemes are representable. The Section \ref{sec8} contains the proof of Theorem \ref{mainthm2}. We define a canonical map $f$ from $(\pi^{N}_{log}(X,x))_{p-nilp} $ to $ \underset{n}{\varprojlim} \ \mu_{p^n}\times \mu_{p^n}$. In Section \ref{sec8parta} we show that $f$ is surjective and in Section \ref{sec8partb} we show $f$ is injective.

\textit{Acknowledgement}. I would like to express my gratitude to Professor Kazuya Kato for his kind advice and feedback while writing this article. I would also like to thank Professor Madhav Nori for his advice and answering several questions which came up while writing this article. 

\section{Nori fundamental group scheme} \label{sec2}
Let $X$ be a reduced and connected scheme over a field $k$. Let $x$ be a $k$-rational point of $X$. Consider the category $\mathcal{T}(X,x)$ whose objects consists of triples of the form $(G,T,t)$ where $G$ is a finite group scheme over $k$, $T$ is a $G$-torsor in the fqpc topology of $X$, and $t$ is $k$-rational point which lies above $x$. A morphism in $\mathcal{T}(X,x)$ between $(G_1,T_1,t_1)$ to $(G_2,T_2,t_2)$ consists of a pair $g,f$ where $g$ is a group scheme homomorphism over $k$ and $f$ is a morphism of schemes from $T_1$ to $T_2$ over $X$ such that the following diagram commutes:

\begin{center}
\begin{tikzcd}
G_1 \times T_1 \arrow{r}{m_1} \arrow[swap]{d}{g \times f} &  T_1 \arrow{d}{f} \\
G_2 \times T_2 \arrow{r}{m_2} & T_2
\end{tikzcd}
\end{center}

where $m_1:G_1 \times T_1 \to T_1$ and $m_2:G_2 \times T_2 \to T_2$ is the action of $G_1$ on $T_1$ and action of $G_2$ on $T_2$ respectively.

The Nori fundamental group scheme is defined as follows

\begin{defn}
Let $(\pi^{N}(X,x),T,t)$ be such that $T$ is a $\pi^{N}(X,x)$-torsor over 
$X$ and $t$ is a $k$-rational point over $x$. If there exists a unique morphism from $
(\pi^{N}(X,x),T,t)$ to any object $(G',T',t')$ in $\mathcal{T}(X,x)$, $
\pi^{N}(X,x)$ is called the Nori fundamental group scheme of $(X,x)$.
\end{defn}

\begin{defn}
Let $(G_1,T_1,t_1)$, $(G_2,T_2,t_2)$, $(G,T,t)$ be objects in $\mathcal{T}(X,x)$. Let $(g_1,f_1)$ and $(g_2,f_2)$ be morphism from $(G_1,T_1,t_1)$ to $(G,T,t)$ and $(G_2,T_2,t_2)$ to $(G,T,t)$ respectively. If the triple $(G_1 \times_G G_2, T_1 \times_T T_2, t_1 \times_t t_2)$ is an object of $\mathcal{T}(X,x)$ for any $(G_1,T_1,t_1)$, $(G_2,T_2,t_2)$, $(G,T,t)$  in $Ob(\mathcal{T}(X,x))$ then we say the category $\mathcal{T}(X,x)$ has property $\mathcal{P}$.
\end{defn}

The following two theorems are proved in Proposition 1 and Proposition 2 of Chapter 2 in  \cite{nori1982fundamental}.
\begin{thm}
The Nori fundamental group scheme of $(X,x)$ exists iff $\mathcal{T}(X,x)$ has property $\mathcal{P}$.
\end{thm}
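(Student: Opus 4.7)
The plan is to prove the two directions separately, with the reverse direction ($\mathcal{P} \Rightarrow$ existence) being the substantive one. Throughout, write ``$(\pi^N,\widetilde T,\widetilde t)$'' for the sought universal object.

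\emph{($\Rightarrow$) Existence implies $\mathcal{P}$.} Assume $(\pi^N,\widetilde T,\widetilde t)$ exists. Given data $(G_i,T_i,t_i) \to (G,T,t)$ ($i=1,2$) in $\mathcal{T}(X,x)$, apply the universal property to obtain unique morphisms $(h_i,\varphi_i):(\pi^N,\widetilde T,\widetilde t) \to (G_i,T_i,t_i)$. By the uniqueness clause applied to the target $(G,T,t)$, the two compositions $(\pi^N,\widetilde T,\widetilde t)\to(G,T,t)$ coincide. Hence $(h_1,h_2)$ and $(\varphi_1,\varphi_2)$ factor through $G_1\times_G G_2$ and $T_1\times_T T_2$ respectively, and these factorizations send $\widetilde t$ to $t_1\times_t t_2$. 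The contracted product $\widetilde T \times^{\pi^N}(G_1\times_G G_2)$ is then an fpqc $G_1\times_G G_2$-torsor on $X$, and the evident map from it to $T_1\times_T T_2$ is an isomorphism (it is a morphism of torsors under compatible faithfully flat covers, hence an isomorphism after checking on a cover where both trivialize). This certifies that $(G_1\times_G G_2,T_1\times_T T_2,t_1\times_t t_2)\in\mathcal{T}(X,x)$.

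\emph{($\Leftarrow$) Property $\mathcal{P}$ implies existence.} The idea is to realize $(\pi^N,\widetilde T,\widetilde t)$ as a cofiltered limit. Let $I$ be the collection of isomorphism classes of objects of $\mathcal{T}(X,x)$; first argue $I$ is essentially small by noting that a $G$-torsor on $X$ is determined by gluing data over an fpqc cover, so up to isomorphism there is only a set of them for each of the set of finite $k$-group schemes. Order $I$ by: $(G',T',t')\leq(G'',T'',t'')$ iff there is a morphism $(G'',T'',t'')\to(G',T',t')$ in $\mathcal{T}(X,x)$; by property $\mathcal{P}$ (take $G=1$, the trivial object, so the fiber product is the direct product $(G_1\times G_2,T_1\times_X T_2,t_1\times t_2)$), the system $I$ is cofiltered. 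Moreover the morphism sets in $\mathcal{T}(X,x)$ between any two objects are \emph{finite}: a morphism is determined by its underlying group homomorphism (since both torsors are trivialized at the base point) and there are only finitely many maps between finite $k$-group schemes, so each transition map is finite. Define
\[
\pi^N(X,x) := \varprojlim_{(G,T,t)\in I} G, \qquad \widetilde T := \varprojlim_{(G,T,t)\in I} T, \qquad \widetilde t := \varprojlim t.
\]
Then $\pi^N$ is a profinite $k$-group scheme, $\widetilde T$ carries a natural action of $\pi^N$, and $\widetilde t$ lies over $x$. The universal property is verified by projecting onto the $(G',T',t')$-component: existence of a morphism is the projection, and uniqueness follows because any morphism $(\pi^N,\widetilde T,\widetilde t)\to (G',T',t')$ produces a compatible system indexed by all objects dominating $(G',T',t')$, matching the projection.

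\emph{Main obstacle.} The core technical point is checking that $\widetilde T$ is genuinely a $\pi^N$-torsor on $X$ rather than just a compatible system (so the universal triple truly lies in the ``pro'' version of $\mathcal{T}(X,x)$ with sensible functoriality), together with verifying that transition maps are affine so that the limit exists as a scheme; this is where cofilteredness coming from property $\mathcal{P}$ is essential, since one needs the transition morphisms of torsors to be faithfully flat and affine to descend the limit. The rest reduces to formal manipulation of universal properties.
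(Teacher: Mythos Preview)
The paper does not give its own proof of this statement; it simply cites Propositions~1 and~2 of Chapter~II of Nori's thesis. So there is no in-paper argument to compare against, and I will assess your proposal on its own terms.

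Your forward direction is essentially correct but the final sentence is circular as written: you cannot appeal to ``a morphism of torsors is an isomorphism'' for the map into $T_1\times_T T_2$ before you know the target is a torsor, since that is precisely the conclusion. The clean fix is to first observe that the canonical map $\widetilde T\times^{\pi^N}G_i\to T_i$ is a $G_i$-equivariant map of $G_i$-torsors, hence an isomorphism, and then compute directly that
\[
T_1\times_T T_2\;\cong\;\bigl(\widetilde T\times^{\pi^N}G_1\bigr)\times_{\widetilde T\times^{\pi^N}G}\bigl(\widetilde T\times^{\pi^N}G_2\bigr)\;\cong\;\widetilde T\times^{\pi^N}(G_1\times_G G_2),
\]
which visibly exhibits it as a torsor.

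Your backward direction has a real gap. Replacing $\mathcal{T}(X,x)$ by the preordered set $I$ discards the datum of \emph{which} morphism realizes a given relation, and there can be several (for instance, distinct group endomorphisms of $\mu_n$ give distinct morphisms between two copies of the trivial pointed $\mu_n$-torsor). A projective limit over a preorder with unspecified transition maps is not defined. What you need is that $\mathcal{T}(X,x)$ is cofiltered \emph{as a category}, and your product argument only gives half of this (common refinements). The other half, equalization of parallel arrows, does follow from property~$\mathcal{P}$ but you have not said why: $\mathcal{T}(X,x)$ has a terminal object $(1,X,x)$, and a category with a terminal object and all fiber products has all finite limits, in particular equalizers, hence is cofiltered. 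With that in hand one takes the limit over the cofiltered category itself, and your verification of the universal property goes through. Your ``main obstacle'' paragraph correctly flags that showing $\widetilde T$ is genuinely a $\pi^N$-torsor is the substantive point, but you never actually carry it out; this is exactly where cofilteredness and affineness of transition maps are used in earnest, and it should not be left as a remark.
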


\begin{thm}\label{thm::nori}
If $X$ is reduced and connected then $\mathcal{T}(X,x)$ has property $\mathcal{P}$.
\end{thm}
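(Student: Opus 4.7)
Given the morphisms $(g_i,f_i)\colon (G_i,T_i,t_i) \to (G,T,t)$, the obvious candidate for property $\mathcal{P}$ is the fiber-product triple $(G_1 \times_G G_2,\, T_1 \times_T T_2,\, t_1 \times_t t_2)$: its first entry is a closed subgroup of the finite $k$-group scheme $G_1 \times G_2$, hence finite over $k$, and its third entry is manifestly a $k$-rational point above $x$. The whole content of the theorem is therefore to verify that the middle entry is a $(G_1 \times_G G_2)$-torsor on $X$ in the fpqc topology.

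The strategy is to realize $T_1 \times_T T_2$ as the preimage of a sub-torsor. The product $T_1 \times_X T_2$ is naturally a $(G_1 \times G_2)$-torsor on $X$, and the pair $(f_1,f_2)$ assembles into a $(g_1 \times g_2)$-equivariant morphism $T_1 \times_X T_2 \to T \times_X T$. The diagonal $\Delta\colon T \hookrightarrow T \times_X T$ is a closed sub-$G$-torsor for the diagonal embedding $G \hookrightarrow G \times G$ (closed because $T \to X$ is finite, hence separated). Taking the scheme-theoretic preimage under the equivariant map yields precisely $T_1 \times_T T_2$, carrying a natural action of the subgroup $(G_1 \times G_2) \times_{G \times G} G = G_1 \times_G G_2$. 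Freeness and simple transitivity of this action (the pseudo-torsor condition) are formal consequences of the corresponding statements for $T_1 \times_X T_2$ and the diagonal $T \hookrightarrow T \times_X T$.

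It then remains to show that $T_1 \times_T T_2 \to X$ is faithfully flat. Pick an fpqc cover $U \to X$ trivializing all three of $T_1$, $T_2$, $T$; a direct computation identifies $(T_1 \times_T T_2)|_U$ with either the empty scheme or, after an fpqc refinement that absorbs the residual twist cocycle, a trivial $(G_1 \times_G G_2)$-torsor on $U$. Consequently, the non-emptiness locus of $T_1 \times_T T_2 \to X$ is clopen in $X$; since $t_1 \times_t t_2$ witnesses non-emptiness above $x$, connectedness of $X$ forces this locus to be all of $X$. Thus $T_1 \times_T T_2 \to X$ is fpqc-locally a trivial $(G_1 \times_G G_2)$-torsor, hence a torsor, which finishes the verification. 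The main obstacle is precisely this last step: one must carefully track the twist arising from not being able to simultaneously normalize the three trivializations, and then combine the resulting clopen dichotomy with the connectedness of $X$ to promote pointwise non-emptiness into faithful flatness.
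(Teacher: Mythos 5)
Your first paragraph is the easy half of the argument and is fine: $G_1\times_G G_2$ is a closed subgroup scheme of the finite group scheme $G_1\times G_2$, the triple has the right shape, $N:=T_1\times_T T_2$ is the preimage of the diagonal sub-$G$-torsor under the equivariant map $T_1\times_X T_2\to T\times_X T$, and freeness and the pseudo-torsor identity for the $H:=G_1\times_G G_2$ action are formal. (The paper does not reprove this theorem; it cites Proposition 2 of Chapter II of Nori's paper, whose proof starts from the same reduction.) The genuine gap is in your last step, and a telling symptom is that your argument never invokes the hypothesis that $X$ is \emph{reduced}, which is exactly the hypothesis that carries the weight in Nori's proof. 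Concretely: over a cover $U\to X$ trivializing $T_1$, $T_2$ and $T$, the restriction $N|_U$ is the fibre of the map $q\colon G_1\times G_2\to G$, $(a_1,a_2)\mapsto g_1(a_1)^{-1}g_2(a_2)$, over a section $d\in G(U)$ recording the discrepancy between the three trivializations. An fpqc refinement of $U$ can "absorb" this twist, i.e.\ produce a section of $N|_U$, only over the locus where $d$ factors through the image subsheaf of $q$; that locus is \emph{closed} in $U$, not open, so the empty-versus-trivializable dichotomy is not uniform over $U$ and does not yield a clopen decomposition of $X$. (Closedness of the non-emptiness locus is automatic, since $N\to X$ is finite; what the connectedness argument needs is its \emph{openness}, and that is precisely what the local computation does not provide.) Moreover, asserting the existence of an fpqc refinement that trivializes $N|_U$ is essentially asserting that $N\to X$ is already faithfully flat, which is the statement being proved, so this step is circular.

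To close the argument one needs: (i) every geometric fibre of the finite morphism $N\to X$ is either empty or a single free $H$-orbit, so the fibre rank of the coherent sheaf $\pi_{*}\mathcal{O}_N$ (with $\pi\colon N\to X$ the projection) takes only the values $0$ and $\operatorname{rank}(H)$; (ii) an argument forcing this rank to be constant on the connected $X$, equal to $\operatorname{rank}(H)$ because of the point $t_1\times_t t_2$ over $x$; and (iii) the fact that a finite morphism onto a \emph{reduced} scheme with locally constant fibre rank is flat, which upgrades surjectivity with constant fibre rank to faithful flatness and hence to fpqc-local triviality of $N$. Step (iii) is where reducedness must enter (the implication is false over a non-reduced base), and steps (i)--(ii) constitute the work that you yourself label ``the main obstacle'': as written, the proposal names the obstacle but does not remove it. Either supply (i)--(iii) in detail or cite Nori's Proposition 2 directly, as the paper does.
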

\section{Log flat topology} \label{sec3}
For the definitions of log schemes and fine saturated log schemes (fs log schemes), see \cite{kato1989logarithmic}.

We need the following definitions to define log flat topology on a fs log scheme.

\begin{defn}
Let $X$ and $Y$ two fs log schemes. Let $f:X \to Y$ be a morphism of fs log schemes. A chart for $f$ is a triple $(a,b,h)$ where $a:X \to \spec(\mathbb{Z}[P])$ and $b:X \to \spec(\mathbb{Z}[Q]$ are charts for $X$ and $Y$ respectively, $h:Q \to P$ is a morphism of monoids such that the following diagram commutes

\begin{center}
\begin{tikzcd}
X \arrow{r}{a} \arrow[swap]{d}{f} & \spec(\mathbb{Z}[P]) \arrow{d}{h^*} \\%
Y \arrow{r}{b}& \spec(\mathbb{Z}[Q]).
\end{tikzcd}
\end{center}

Here $h^*$ is morphism induced on  $\spec(\mathbb{Z}[P])$ to $\spec(\mathbb{Z}[Q])$ by the monoid morphism $h$.
\end{defn}

\begin{defn}
Let $f: X \to Y$ be a morphism of fs log schemes. We say $f$ is a log flat morphism if there exists a chart for $f$ fppf locally $(a:X \to \spec(\mathbb{Z}[P]),b:X \to \spec(\mathbb{Z}[Q],h:Q \to P)$ such that it satisfies the following conditions are satisfied
\begin{enumerate}
\item The induced map from $X \to Y \times_{\spec(\mathbb{Z}[Q]} \spec(\mathbb{Z}[P])$ is classically flat.
\item The induced map from $Q^{\text{gp}} \to P^{\text{gp}}$ is injective.  
\end{enumerate}
\end{defn}

\begin{defn}
\begin{enumerate}
\item A morphism of monoids $h:P \to Q$ is Kummer if it is injective and there exists an $n \geq 1$ such that $Q^{n} \subset h(P)$.
\item Let $X=(X,  \mathcal{O}_X, M_X)$ and $Y=(Y, \mathcal{O}_Y, M_Y)$ be fs log schemes and $f:X \to Y$ be a morphism of fs log schemes. We say $f$ is Kummer for any $x \in X$, the induced map of monoids $(M_Y/\mathcal{O}^{*}_Y)_{f(\bar{x})} \to (M_X/\mathcal{O}^{*}_X)_{\bar{x}}$ is Kummer.
\end{enumerate}
\end{defn}

\begin{defn}
We say a morphism of fs log schemes $f:X \to Y$ is Kummer log flat if it is both Kummer and log flat.
\end{defn}

We can show that it is always possible a pick nice charts fppf locally for Kummer log flat morphism.

\begin{lem}\label{lem::chart}
Let $f : X \to Y$ be a Kummer log flat morphism of fs log schemes.
Let $x$ be a point of X with its image $y$ in Y . Then the following hold.
\begin{enumerate}
\item  There is a fppf local chart  of $f$, $(a:X \to \spec(\mathbb{Z}[P]),b:Y \to 
\spec(\mathbb{Z}[Q]),h)$ such that the monoid morphism $h:Q \to P$ is Kummer
,the induced maps $Q \to (M_Y/\mathcal{O}^{*}_Y)_{\bar{y}}$ and $P \to (M_X/\mathcal{O}^{*}_X)_{\bar{x}}$ are isomorphisms, and such that the induced
morphism of schemes $X \to Y \times^{\text{fs}}_{\spec(\mathbb{Z}[Q])} \spec(\mathbb{Z}[P])$ is strict and classically flat.

\item   Further, assume that $f$ is locally of finite presentation. Then, we can take a chart
as in (1) such that $X \to Y \times^{\text{fs}}_{\spec(\mathbb{Z}[Q])} \spec(\mathbb{Z}[P])$ is also surjective and locally of finite presentation.
\end{enumerate}
\end{lem}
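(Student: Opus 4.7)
My plan is to refine the fppf-local chart supplied by the log flatness hypothesis so that its chart monoids coincide with the characteristic stalks at $\bar x$ and $\bar y$; the Kummer hypothesis on $f$ then forces the refined chart map to be Kummer, and strictness of the base-changed morphism follows from a saturated pushout computation. Concretely, on an fs log scheme the characteristic stalk is a finitely generated saturated monoid which lifts étale-locally to a chart, so after passing to étale neighborhoods of $y$ and $x$ one may choose charts $Q \to M_Y$ and $P \to M_X$ with $Q \xrightarrow{\sim} (M_Y/\mathcal{O}_Y^*)_{\bar y}$ and $P \xrightarrow{\sim} (M_X/\mathcal{O}_X^*)_{\bar x}$. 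After further shrinking, the compatibility of these charts with $f$ produces a unique monoid map $h : Q \to P$ making the chart diagram commute, and since $f$ is Kummer on characteristic stalks by hypothesis, so is $h$.

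To verify strictness of $X \to Y \times^{\mathrm{fs}}_{\spec(\mathbb{Z}[Q])} \spec(\mathbb{Z}[P])$ at $\bar x$, I would compute the characteristic stalk of the fs fibre product at the image of $\bar x$: it is the saturated pushout $(M_Y/\mathcal{O}_Y^*)_{\bar y} \oplus^{\mathrm{sat}}_Q P$. Because $Q \to (M_Y/\mathcal{O}_Y^*)_{\bar y}$ is an isomorphism, this pushout is simply $P$, which is already saturated and agrees with $(M_X/\mathcal{O}_X^*)_{\bar x}$; thus the induced map of characteristic stalks is the identity and strictness holds at $\bar x$. For classical flatness of the same morphism, the log flatness hypothesis provides, after an fppf refinement, a chart $(P', Q', h')$ for which the naïve scheme-theoretic base change $X \to Y \times_{\spec(\mathbb{Z}[Q'])} \spec(\mathbb{Z}[P'])$ is classically flat; comparing the two charts on a common fppf cover, and using that in the Kummer setting the fs fibre product is obtained from the ordinary one by a faithfully flat saturation operation, I can transport classical flatness to the chart constructed above.

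For part~(2), local finite presentation of $X \to Y \times^{\mathrm{fs}} \spec(\mathbb{Z}[P])$ is inherited from $f$ since $\spec(\mathbb{Z}[P])$ is of finite type ($P$ being finitely generated), and surjectivity can then be arranged by restricting to appropriate opens, using that a morphism which is flat and locally of finite presentation is open. The main obstacle I foresee is the strictness step: the saturated pushout of fs monoids differs in general from the naïve pushout, and verifying that in this Kummer situation the saturation is trivial on the nose, with no spurious units introduced, requires careful bookkeeping between the characteristic stalks and the actual monoid charts lifting them.
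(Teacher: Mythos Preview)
The paper does not prove this lemma; it simply cites Proposition~1.3 of \cite{illusie2013log}. Your outline is essentially the standard argument given there: lift the characteristic monoids $(M_Y/\mathcal{O}_Y^*)_{\bar y}$ and $(M_X/\mathcal{O}_X^*)_{\bar x}$ to neat charts \'etale-locally, read off that $h$ is Kummer from the hypothesis on stalks, and deduce strictness of the fs base change from the saturated-pushout computation you describe.

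The one place your sketch diverges from the cited proof is the flatness step. You build the neat chart first and then compare it with an independently chosen chart $(P',Q',h')$ witnessing log flatness. The usual argument runs in the opposite order: start from the flat chart $(P',Q',h')$ and \emph{refine} it by replacing $Q'$ and $P'$ with their images in the characteristic stalks; since the kernels consist of elements mapping to units, this refinement amounts, fppf-locally, to a localization of the monoid algebras, which visibly preserves flatness of $X \to Y\times_{\spec(\mathbb{Z}[Q])}\spec(\mathbb{Z}[P])$. Your comparison route can be made to work, but the refinement-first ordering avoids the bookkeeping you flagged. The obstacle you anticipate about spurious units in the saturation is not an issue here precisely because $Q\to (M_Y/\mathcal{O}_Y^*)_{\bar y}$ is already an isomorphism and $P$ is already saturated, so the fs pushout coincides with $P$ on the nose.
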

\begin{proof}
See Proposition 1.3 in \cite{illusie2013log}.
\end{proof}

We are finally ready to define a Grothendieck topology on the category of fs log schemes over an fs log scheme X, $FSLSch/X$, which is called the log flat topology.

\begin{defn} 
Let $X$ be a fs log scheme. The \textit{log} \textit{flat} 
\textit{site}, $(FSLSch/X)_{logfl}$, is defined as follows. The log flat topology on $(FSLSch/X)_{logfl}$ is given by covering families $\{f_i:T_i \rightarrow T\}_{i \in I}$ of morphisms in $(FSLSch/X)$ such that $ \bigcup \limits_{i \in I} f_i(T_i) = T$ 
and $f_i$ are Kummer log flat morphisms and the underlying morphism of schemes $f_i$ is are locally of finite presentation. We will denote this site by $X_{\text{fl}}^{\text{log}}$
\end{defn} 

\begin{defn}
Let $X= (X, \mathcal{O}_X, M_X)$ and $Y = (Y, \mathcal{O}_Y, M_Y )$. Let $f:X \to Y$ be a morphism of fs log schemes. We say $f$ is strict if the induced map $f^{*}M_Y \to M_X$ is an isomorphism.
\end{defn}

Let $X$ be an fs log scheme, we denote the underlying scheme of $X$ by $|X|$. Fiber products exist in the category of fs log scheme but the classical fiber product of three fs log schemes may not be an fs log scheme.  So we denote fiber product in the category of fs log schemes it by $\times^{\text{fs}}$ to distinguish it from the classical fiber product of schemes. In general, $|X \times^{\text{fs}}_{Z} Y|$ but sometimes they can be isomorphic as illustrated by the following lemma.

\begin{lem}\label{Lem::strict}
Let $f : X \to Z$ and $g: Y \to Z$ be two morphisms of fs log schemes such that $f$ is strict. Then  $|X \times^{\text{fs}}_{Z} Y| \cong |X \times_{Z} Y|$ and the canonical map from $X \times^{\text{fs}}_{Z} Y \to Y$ is again strict.
\end{lem}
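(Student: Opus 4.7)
My plan is to build an explicit candidate for $X \times^{\text{fs}}_Z Y$ and verify its universal property directly. Put $W := |X| \times_{|Z|} |Y|$ with its classical projections $p_X, p_Y$, and set $q := f \circ p_X = g \circ p_Y$. Equip $W$ with the log structure $M_W := p_Y^{*} M_Y$ pulled back from $Y$. Since the pullback of an fs log structure along a morphism of schemes is again fs (fineness and saturation of stalks of $\overline{M}_Y$ are preserved under inverse image), the pair $(W, M_W)$ is a genuine fs log scheme whose underlying scheme is $|X| \times_{|Z|} |Y|$.

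Next I would exhibit natural morphisms $W \to X$ and $W \to Y$ of fs log schemes making the square over $Z$ commute. The projection to $Y$ uses $p_Y$ together with the identity $p_Y^{*} M_Y \to M_W$, so it is strict by construction. For the projection to $X$, the strictness of $f$ provides an isomorphism $p_X^{*} M_X \cong p_X^{*} f^{*} M_Z = q^{*} M_Z$; composing with the pullback of $g^{*} M_Z \to M_Y$ under $p_Y$ yields the required morphism $p_X^{*} M_X \cong q^{*} M_Z \to p_Y^{*} M_Y = M_W$.

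To check the universal property, let $T$ be an fs log scheme with morphisms $\alpha: T \to X$ and $\beta: T \to Y$ agreeing over $Z$. The underlying map $|T| \to W$ is forced by the universal property of the classical fiber product. Because $M_W = p_Y^{*} M_Y$, specifying the log part of a lift $T \to W$ amounts to specifying a morphism $\beta^{*} M_Y \to M_T$, which is precisely the data coming from $\beta$; hence the lift is unique and compatibility with $p_Y$ is automatic. Compatibility with $\alpha$ is exactly where strictness of $f$ enters: both $\alpha$ and $\beta$ induce the same map $\gamma^{*} M_Z \to M_T$, where $\gamma$ is the common composite $T \to Z$, and strictness forces the log part of $\alpha$ to factor through this map, so recomposing through $M_W$ recovers $\alpha$. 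This identifies $W$ with $X \times^{\text{fs}}_Z Y$ and gives both conclusions of the lemma at once.

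The main obstacle is the last compatibility check. Without strictness of $f$, the log structure on $X \times^{\text{fs}}_Z Y$ is a genuine fs-saturated amalgamated sum of the pullbacks of $M_X$ and $M_Y$ over $M_Z$, and cannot be realized as a bare pullback from $Y$; it is precisely strictness of $f$ that collapses the amalgam $p_X^{*} M_X \oplus_{q^{*} M_Z}^{\text{fs}} p_Y^{*} M_Y$ down to the single factor $p_Y^{*} M_Y$.
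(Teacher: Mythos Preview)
The paper states this lemma without proof; it is a standard fact in logarithmic geometry (see e.g.\ Ogus, \emph{Lectures on Logarithmic Algebraic Geometry}, III.2.1). Your direct construction---equipping the classical fiber product $|X|\times_{|Z|}|Y|$ with the pullback log structure $p_Y^{*}M_Y$ and then verifying the universal property in the category of fs log schemes---is correct and is exactly the standard argument. The decisive observation, which you isolate explicitly, is that strictness of $f$ forces the fs pushout $p_X^{*}M_X \oplus_{q^{*}M_Z}^{\text{fs}} p_Y^{*}M_Y$ to collapse to $p_Y^{*}M_Y$, so no integralization or saturation step alters the underlying scheme and the projection to $Y$ is strict by construction.
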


\section{Log smooth morphisms}\label{sec4}
Please see Section 3.3 in \cite{kato1989logarithmic} for definition of log smooth morphisms.

Here we will use the following characterization of log-smooth morphism
\begin{thm}
\label{logcriterion}
Let $f: X \to Y$ be a morphism of fs log schemes and $,b:Y \to 
\spec(\mathbb{Z}[Q])$ be a chart for $Y$. Then the following conditions
are equivalent:
\begin{enumerate}
\item $f$ is log smooth.
\item There exists etale locally a chart $(a:X \to \spec(\mathbb{Z}[P]),b:Y \to 
\spec(\mathbb{Z}[Q]),h)$ of $f$ such that $\operatorname{Ker}(Q^{\text{gp}} \to P^{\text{gp}})$ and the torsion part of $\operatorname{Coker}(Q^{\text{gp}} \to P^{\text{gp}})$ are finite groups of orders invertible on $X$, and the induced map from $X \to Y \times_{\spec(\mathbb{Z}[Q]} \spec(\mathbb{Z}[P])$ is classically etale.
\end{enumerate}
\end{thm}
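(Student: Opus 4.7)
The statement is Kato's well-known characterization of log smoothness (essentially Theorem 3.5 in his original paper on logarithmic structures). The plan is to prove the two implications separately, each working etale locally around a geometric point $\bar{x}$ of $X$ with image $\bar{y}$ in $Y$.

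For $(2) \Rightarrow (1)$, I would factor $f$ etale locally as
\[
X \longrightarrow Y \times_{\spec(\mathbb{Z}[Q])} \spec(\mathbb{Z}[P]) \longrightarrow Y,
\]
where the first arrow is classically etale by hypothesis and strict (both sides acquire their log structures via pullback from the canonical log structure on $\spec(\mathbb{Z}[P])$), hence log etale, and the second arrow is the base change to $Y$ of the model map $\spec(\mathbb{Z}[P]) \to \spec(\mathbb{Z}[Q])$. It thus suffices to show that the model map itself is log smooth. This I would verify via the log infinitesimal lifting criterion: a square-zero lifting problem over the model map reduces to a question about lifts of monoid maps $P \to (\mathrm{units} + \mathrm{ideal})$, whose obstructions and indeterminacies are controlled by $\operatorname{Ker}(Q^{\text{gp}} \to P^{\text{gp}})$ and the torsion part of $\operatorname{Coker}(Q^{\text{gp}} \to P^{\text{gp}})$. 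The invertibility hypothesis on their orders makes the obstruction vanish (and makes lifts unique up to smooth indeterminacy). Since log smoothness composes, $f$ is then log smooth.

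For $(1) \Rightarrow (2)$, the task is to construct the chart $a$ from the data of $f$ and $b$. The stalk $\overline{M}_{X,\bar{x}} := (M_X/\mathcal{O}_X^*)_{\bar{x}}$ is a finitely generated sharp fs monoid, and the induced monoid map from $Q$ (via the chart $b$ and $f$) factors through it. I would then construct a finitely generated fs monoid $P$ with a factorization $Q \to P \to \overline{M}_{X,\bar{x}}$ such that the second map is an isomorphism and, simultaneously, $\operatorname{Ker}(Q^{\text{gp}} \to P^{\text{gp}})$ and the torsion of $\operatorname{Coker}(Q^{\text{gp}} \to P^{\text{gp}})$ are finite of orders invertible on $X$; this is possible by the structure theory of extensions of fs monoids, possibly after adjoining free generators to absorb extra log tangent directions. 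Lifting $P \to \overline{M}_{X,\bar{x}}$ along $M_X \to M_X/\mathcal{O}_X^*$ produces the chart $a$ on an etale neighborhood $U$ of $\bar{x}$, and the resulting map $U \to Y \times_{\spec(\mathbb{Z}[Q])} \spec(\mathbb{Z}[P])$ is strict at $\bar{x}$, hence on a neighborhood. Log smoothness of $f$, combined with strictness, forces this strict model map to be classically smooth; a final refinement of $P$ (absorbing the relative dimension into a free summand) then brings the relative dimension to zero, yielding classical etaleness.

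The main obstacle is the last step of $(1) \Rightarrow (2)$, namely choosing $P$ so that both the numerical conditions on $Q^{\text{gp}} \to P^{\text{gp}}$ and the classical etaleness of the strict model map hold at once. The key technical input is the identification of the log cotangent sheaf $\Omega^{1}_{X/Y}(\log)$ with the classical cotangent sheaf of the strict map, together with a generator count coming from the chart; this turns the desired etaleness into a linear-algebra condition on $P^{\text{gp}}/Q^{\text{gp}}$ which one can arrange by an appropriate adjustment of the free generators of $P$.
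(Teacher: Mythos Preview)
The paper does not give its own proof of this statement; it simply refers the reader to Theorem~3.5 of Kato's original paper \cite{kato1989logarithmic}. Your outline is a faithful sketch of how Kato's argument actually proceeds---the factorization through the toric model for $(2)\Rightarrow(1)$, and the construction of the chart $P$ via the stalk $\overline{M}_{X,\bar{x}}$ together with the log cotangent computation for $(1)\Rightarrow(2)$---so there is nothing substantive to compare against beyond the citation itself. If anything, you have supplied more detail than the paper does.
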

\begin{proof}
See Theorem 3.5 in \cite{kato1989logarithmic}.
\end{proof}

\begin{defn}
Let $k$ be a field with trivial log structure. Let $X$ be an fs log scheme over $k$.
We say $X$ is log smooth over $k$ if the structure morphism from $X$ to $k$ is log smooth.
\end{defn}

\begin{lem}
\label{fibreprodstalk}
Let $X_1, X_2$ and $X_3$ be fs log schemes with morphisms $X_2 \to X_1$ and $X_2 \to X_1$  . Let $X_4 = X_3 \times_{X_1}^{\text{fs}} X_2$. Let $x \in X_4$ and
$Q_i = \bar{M}_{X_i,x}$ ( here $x \to X_4 \to X_i$ is also denoted by $x$). Then the natural map from $(Q_3 \oplus_{Q_1}^{\text{sat}} Q_2)/(Q_3 \oplus_{Q_1}^{\text{sat}} Q_2)^{\times} \to Q_4 $ is an isomorphism.
\end{lem}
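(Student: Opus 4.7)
The plan is to reduce the statement to a purely monoid-theoretic computation by choosing good charts in a neighborhood of $x$ and then using the explicit description of the fs fibre product.

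\textbf{Step 1 (Choice of good charts).} Let $x_i$ denote the image of $x$ in $X_i$. Using the standard existence results for charts of fs log schemes (see e.g.\ Kato \cite{kato1989logarithmic}), I would choose, étale locally around $x_1, x_2, x_3$, fs charts $P_i \to M_{X_i}$ compatible with the given morphisms $X_2 \to X_1$ and $X_3 \to X_1$, in such a way that the induced maps $P_i \to \bar M_{X_i,x_i} = Q_i$ are isomorphisms. (So in particular the $P_i$ are sharp, and one has monoid morphisms $P_1 \to P_2$ and $P_1 \to P_3$ realizing the maps on characteristics.)

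\textbf{Step 2 (Chart for the fs fibre product).} By construction of the fs fibre product, the log scheme $X_4 = X_3 \times^{\text{fs}}_{X_1} X_2$ admits, locally around $x$, a chart by the saturated pushout
\[
  P \;:=\; P_3 \oplus^{\text{sat}}_{P_1} P_2 \;=\; Q_3 \oplus^{\text{sat}}_{Q_1} Q_2,
\]
and the underlying scheme is obtained from the classical fibre product followed by base change to $\spec(\mathbb{Z}[P])$. Thus $P$ is an fs chart of $M_{X_4}$ in a neighborhood of $x$.

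\textbf{Step 3 (Computation of the stalk).} Since $P$ is a chart of $M_{X_4}$, the image of $P$ generates $M_{X_4,x}$ as an $\mathcal O^{*}_{X_4,x}$-monoid. Passing to the quotient by $\mathcal O^{*}_{X_4,x}$, I need to check that the kernel of the induced surjection $P \twoheadrightarrow \bar M_{X_4,x} = Q_4$ is exactly $P^{\times}$, i.e.\ that no element of $P \setminus P^{\times}$ maps to a unit in $\mathcal O_{X_4,x}$. The face of $P$ consisting of such elements is a face containing $P^{\times}$; its preimages in $P_2$ and $P_3$ must then be faces that become trivial in $Q_2$ and $Q_3$ respectively, hence are just $P_2^{\times} = 0$ and $P_3^{\times} = 0$ by our choice of charts. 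Tracing this through the saturated pushout forces the face to be exactly $P^{\times}$, which gives the claimed isomorphism $(Q_3 \oplus^{\text{sat}}_{Q_1} Q_2)/(Q_3 \oplus^{\text{sat}}_{Q_1} Q_2)^{\times} \xrightarrow{\sim} Q_4$.

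\textbf{Main obstacle.} The subtle point is Step 3: even when the original charts $P_i$ are neat at $x_i$ (isomorphisms on characteristics), the saturated pushout $P$ can acquire new units, and one must verify that these are exactly the expected $P^{\times}$ and that no further non-unit element of $P$ becomes invertible in $\mathcal O_{X_4,x}$. This is really a statement about the universal property of the saturated pushout combined with the local structure of the fs base change $\spec \mathbb{Z}[P]$, and it is where the saturation (rather than just integral pushout) enters essentially.
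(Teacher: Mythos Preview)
The paper does not supply its own argument here; it simply refers to Proposition~2.1.1 of \cite{nakayama1997logarithmic}. Your outline is essentially the standard proof one finds in that reference: reduce to a monoid computation via neat charts and the saturated pushout chart for the fs fibre product.

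Your Step~3 is correct once two small points are made explicit. First, the ring maps $\mathcal{O}_{X_i,x_i}\to\mathcal{O}_{X_4,x}$ are local homomorphisms, so they \emph{reflect} units; hence if $p\in P_i$ has image a unit in $\mathcal{O}_{X_4,x}$, it was already a unit in $\mathcal{O}_{X_i,x_i}$ and thus lies in $P_i^{\times}=\{0\}$ by neatness. Second, to deduce that the face $F\subset P$ of elements mapping to units equals $P^{\times}$: for $f\in F$, saturation gives $n\ge 1$ with $nf$ equal to a sum of images of elements of $P_2$ and $P_3$ inside $P$; the face property forces each summand into $F$, whose preimages in $P_2,P_3$ are trivial, so $nf=0$ in $P^{\mathrm{gp}}$ and hence $f\in P^{\times}$ since $P$ is saturated. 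This addresses precisely the concern you flag as the ``main obstacle.''
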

\begin{proof}
See Proposition 2.1.1 in \cite{nakayama1997logarithmic}. 
\end{proof}

\begin{lem}
\label{isomonoid}
Consider the morphism of saturated monoids $Q_1 \to Q_2$ and $Q_1 \to Q_3$. Let $Q_1 \to Q_2$ be an inclusion such that $Q^n \subset P$. Every element $Q_1$ becomes $n$-divisible when mapped to $Q_3$. Then $Q_3 \to (Q_3 \oplus_{Q_1}^{\text{sat}} Q_2)/(Q_3 \oplus_{Q_1}^{\text{sat}} Q_2)^{\times}$ is an isomorphism.
\end{lem}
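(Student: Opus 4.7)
The plan is to construct an explicit inverse to the canonical map $f : Q_3 \to \bar{P}$, where $P := Q_3 \oplus_{Q_1}^{\text{sat}} Q_2$ and $\bar{P} := P/P^{\times}$. Throughout I will use the basic fact that a sharp saturated monoid $M$ has $M^{\text{gp}}$ torsion-free: if $n x = 0$ in $M^{\text{gp}}$ then $x$ and $-x$ lie in $M$ by saturation, so $x \in M^{\times} = \{0\}$. This applies to $\bar{P}$ automatically, and to $Q_3$ under the (implicit) assumption that $Q_3$ is sharp, which is the relevant case since in the application $Q_i = \bar{M}_{X_i,x}$.

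First I would build a monoid homomorphism $\phi : Q_2 \to Q_3$ as follows. Given $q_2 \in Q_2$, the Kummer condition $Q_2^n \subset Q_1$ (the displayed $Q^n \subset P$ being a typo for this) gives $n q_2 \in Q_1$. By the hypothesis that every element of $Q_1$ becomes $n$-divisible in $Q_3$, there is $q_3 \in Q_3$ with $n q_3$ equal to the image of $n q_2$ in $Q_3$; torsion-freeness of $Q_3^{\text{gp}}$ makes this $q_3$ unique, and I set $\phi(q_2) := q_3$. The same uniqueness forces $\phi$ to be a monoid homomorphism and to agree on $Q_1$ with the given map $Q_1 \to Q_3$. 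The universal property of the saturated pushout applied to the pair $(\mathrm{id}_{Q_3}, \phi)$ then produces a morphism $P \to Q_3$, which factors through the sharpening to give $g : \bar{P} \to Q_3$ satisfying $g \circ f = \mathrm{id}_{Q_3}$.

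It remains to show $f$ is surjective. For $q_2 \in Q_2$ with image $\bar{q}_2 \in \bar{P}$, the equality $n q_2 \in Q_1$ together with the definition of $\phi$ shows that $n\bar{q}_2 = n f(\phi(q_2))$ in $\bar{P}^{\text{gp}}$; torsion-freeness of $\bar{P}^{\text{gp}}$ then yields $\bar{q}_2 = f(\phi(q_2))$. For an arbitrary $y \in P$, pick $m \geq 1$ with $my$ in the ordinary pushout $Q_3 \oplus_{Q_1} Q_2$, so $my$ is the image of some $(z_3, z_2)$. Combining the previous observation with the image of $Q_3$ we get $m\bar{y} = f(z_3 + \phi(z_2))$ in $\bar{P}$. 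Setting $\tilde{q} := g(\bar{y}) \in Q_3$ and applying $g$ gives $m\tilde{q} = z_3 + \phi(z_2)$, whence $m f(\tilde{q}) = m\bar{y}$; torsion-freeness of $\bar{P}^{\text{gp}}$ upgrades this to $f(\tilde{q}) = \bar{y}$, proving surjectivity.

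The only real technical obstacle is handling saturation and the quotient by units simultaneously: elements of the saturated pushout are only controlled after multiplication by some $m$, and a priori this could fail to descend to a statement about $\bar{y}$ itself. The resolution is precisely that $\bar{P}$ is sharp saturated, so its associated group is torsion-free, and the twofold use of this fact (once to define $\phi$ unambiguously and once to cancel the multiplier $m$ in the surjectivity step) is what makes the argument go through.
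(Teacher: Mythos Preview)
Your argument is correct. The paper does not actually prove this lemma; it simply defers to Lemma~2.3 of \cite{hagihara2016structure}. You have supplied a self-contained proof, and the strategy---build a retraction $g:\bar P\to Q_3$ by first dividing in $Q_3$ to get $\phi:Q_2\to Q_3$, then use torsion-freeness of $\bar P^{\mathrm{gp}}$ to upgrade the retraction to an inverse---is the natural one and matches the standard approach in the literature. Your explicit observation that the statement tacitly assumes $Q_3$ sharp (so that $Q_3^{\mathrm{gp}}$ is torsion-free and the map $P\to Q_3$ automatically kills $P^\times$) is exactly right: without it the conclusion can fail, and in the paper's application one indeed has $Q_3=\bar M_{X_3,x}$. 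The only cosmetic point is that once you have $g\circ f=\mathrm{id}_{Q_3}$ and have shown $f(\phi(q_2))=\bar q_2$ for all $q_2\in Q_2$, your final paragraph is really verifying $f\circ g=\mathrm{id}_{\bar P}$ on all of $\bar P$ by reducing to the integral pushout via saturation; you might phrase it that way directly rather than as a separate ``surjectivity'' step.
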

\begin{proof}
See Lemma 2.3 in \cite{hagihara2016structure}.
\end{proof}
The following proposition is used in the proof of \ref{torsorsfibreproduct}.

\begin{prop}\label{redcov}
Let $k$ be a field and $X$ be a fs log scheme which is log smooth over the field $k$ ( the log structure on $k$ is a trivial).
Let $f : Y \rightarrow X$ be a Kummer log-́flat morphism . Let
$y \in Y, x = f (y)$. Then there exists a commutative diagram of fs log schemes

\begin{center}
\begin{tikzcd}
T \arrow{r}{\phi} \arrow[swap]{d}{h} & X_n \arrow{d}{g} \\%
Y \arrow{r}{f}& X
\end{tikzcd}
\end{center}

such $\phi$ is a classical flat morphism and $h$ is a surjective Kummer log flat morphism. 
Moreover $X_n$ is reduced.
\end{prop}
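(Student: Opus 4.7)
The plan is to construct $X_n$ as an $n$-th root Kummer cover of $X$ determined by a chart, set $T = Y \times^{\text{fs}}_X X_n$, and deduce classical flatness of $T \to X_n$ from a monoid-pushout computation that forces $T \to X_n$ to be strict.

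I would begin by working fppf-locally around $y$ and $x$ and applying Lemma \ref{lem::chart} to obtain a chart $(a : X \to \spec \mathbb{Z}[P],\, b : Y \to \spec \mathbb{Z}[P'],\, h : P \to P')$ for $f$ with $h$ Kummer of some index $n$ and with $P \xrightarrow{\sim} \bar{M}_{X,x}$, $P' \xrightarrow{\sim} \bar{M}_{Y,y}$. Since the chart is neat at $x$, the morphism $X \to \spec \mathbb{Z}[P]$ is strict in an etale neighbourhood of $x$. Using Theorem \ref{logcriterion} applied to $X \to \spec k$ (trivial chart, $Q = 0$), I would refine that neighbourhood so that additionally $X \to \spec k[P]$ is classically etale. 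This alignment of the two charts is where the setup crystallises.

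Let $\tfrac{1}{n}P$ denote the fs monoid obtained by freely adjoining $n$-th roots of all elements of $P$, so $P \hookrightarrow \tfrac{1}{n}P$ is Kummer of index $n$ and every element of $P$ is $n$-divisible in $\tfrac{1}{n}P$. Define
\begin{equation*}
X_n := X \times^{\text{fs}}_{\spec \mathbb{Z}[P]} \spec \mathbb{Z}[\tfrac{1}{n}P], \qquad T := Y \times^{\text{fs}}_X X_n.
\end{equation*}
The map $g : X_n \to X$ is a surjective Kummer log flat morphism as the fs base change of $\spec \mathbb{Z}[\tfrac{1}{n}P] \to \spec \mathbb{Z}[P]$. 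Strictness of $X \to \spec \mathbb{Z}[P]$ combined with Lemma \ref{Lem::strict} gives $|X_n| \cong |X \times_{\spec k[P]} \spec k[\tfrac{1}{n}P]|$; since $\spec k[\tfrac{1}{n}P]$ is reduced (the monoid algebra of a torsion-free fs monoid embeds into a Laurent polynomial ring) and $X$ is classically etale over $\spec k[P]$, the scheme $X_n$ is reduced. Likewise $h : T \to Y$ is surjective Kummer log flat as an fs base change of $g$.

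It remains to show the projection $\phi : T \to X_n$ is classically flat. For any $t \in T$ lying over $y$, Lemma \ref{fibreprodstalk} gives
\begin{equation*}
\bar{M}_{T,t} \cong \bigl(\tfrac{1}{n}P \oplus^{\text{sat}}_{P} P'\bigr)\big/\bigl(\tfrac{1}{n}P \oplus^{\text{sat}}_{P} P'\bigr)^{\times},
\end{equation*}
and Lemma \ref{isomonoid} (with $Q_1 = P$, $Q_2 = P'$, $Q_3 = \tfrac{1}{n}P$) shows that $\bar{M}_{X_n,\cdot} = \tfrac{1}{n}P \xrightarrow{\sim} \bar{M}_{T,t}$, so $\phi$ is strict. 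Since $\phi$ is also log flat (as an fs base change of $f$), and a strict log flat morphism is classically flat, the proof is complete. The main technical obstacle will be the chart-alignment step: simultaneously arranging the Kummer chart from Lemma \ref{lem::chart} and the log smoothness chart from Theorem \ref{logcriterion} to share the monoid $P \cong \bar{M}_{X,x}$, and carefully tracking strictness through the fs base changes so that the classical versus fs fibre product distinction does not interfere with the reducedness of $X_n$.
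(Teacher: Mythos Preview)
Your proposal is correct and follows essentially the same route as the paper: construct $X_n$ as the $n$-th root Kummer cover via $P \hookrightarrow \tfrac{1}{n}P$, set $T = Y \times^{\text{fs}}_X X_n$, and use Lemmas \ref{fibreprodstalk} and \ref{isomonoid} to show $\phi$ is strict (hence classically flat since it is log flat). You are slightly more explicit than the paper in invoking Lemma \ref{lem::chart} to pin down the index $n$ and in closing the loop from strictness to classical flatness, but the architecture is the same.
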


\begin{proof}
Since $X$ is log smooth over the field $k$ by $\ref{logcriterion}$ there exist charts (etale locally) $a:X \to \spec(\mathbb{Z}[P])$ such that the canonical map from $X \to \spec(k[P])$ is etale.  Let $X_n = X \times^{\text{fs}}_{\spec(\mathbb{Z}[P])} \spec(\mathbb{Z}[P^{1/n}])$ where $P^{1/n}$ is a $P$ -monoid such that $P \to P^{1/n}$ is 
isomorphic to $n: P \to P$  $( a \to a^n)$. Note that the canonical map from $X_n \to \spec(k[P^{1/n}])$ is etale and since $\spec(k[P^{1/n}])$ is reduced it implies $X_n$ is also reduced.

Now let $T = Y \times_X^{\text{fs}} X_n$. We will have to show that the morphism $\phi:T \to X_n$ is strict. This is equivalent to show for all $x \in T$, the induced morphism $\overline{M}_{X_n,\phi(x)} \to \overline{M}_{T,x}$ is an isomorphism. Using Lemma \ref{fibreprodstalk} we can $ \overline{M}_{T,x}$ and Lemma $\ref{isomonoid}$ shows $\overline{M}_{X_n,\phi(x)} \to \overline{M}_{T,x}$ is an isomorphism. The morphism $g: X_n \to X$ is surjective and Kummer log flat therefore the morphism $h: T \to Y$ is surjective and Kummer log flat. 
\end{proof}

\section{Torsors in log flat topology}\label{sec5}
In this and the following sections  we will consider only fs log schemes over a field $k$ with the log structure on $k$ being trivial and all morphisms considered will be $k$-morphism. Let $G$ be a classical group scheme over the field $k$ and let $X$ be an fs log scheme  over the field $k$. The group scheme $G \times^{\text{fs}}_k X$  induces an a sheaf of abelian of groups on the $X_{\text{fl}}^{\text{log}}$ site.

Now, let $S$ be a sheaf of sets on the $X_{\text{fl}}^{\text{log}}$ site. We call $S$ a $G$-sheaf if it is endowed with a action of $G$.

Now, let $S_1$ be a $G_1$ sheaf and $S_2$ be a $G_2$ sheaf where $G_1$ and $G_2$ are group schemes over $k$ and $S_1$ and $S_2$ are sheaves of sets on $X_{\text{fl}}^{\text{log}}$ site. Let $h$ be a group scheme homomorphism from $G_1$ to $G_2$. We say a morphism of sheaf $f$ from $S_1$ to $S_2$ is a $h$-morphism if for every $g$ in $G_1(U)$ and $s$ in $S_1(U)$, $f_U(gs)=h_U(g)f_U(s)$.

A $G$-sheaf $S$ is a called a \textit{trivial torsor} if it is isomorphic to $G$ as a $G$-sheaf.

\begin{defn}
Let $S$ be a $G$-sheaf on $X_{\text{fl}}^{\text{log}}$ site where $G$ is a group scheme over $k$. We say $S$ is a $G$-torsor if there exists a covering $\{U_i \rightarrow X\}_{i \in I} $ in $X_{\text{fl}}^{\text{log}}$ such that $S_{U_i}$ is isomorphic to $G\times_k U_i$ as a $G\times_k U_i$-sheaf or in other words $S_{U_i}$ is a trivial $G\times_k U_i$-torsor. ( Here by $S_{U_i}$ we mean the sheaf $S$ restricted to $(U_i)_{\text{fl}}^{\text{log}}$). In other words, the sheaf $S_{U_i}$ on $(U_i)_{\text{fl}}^{\text{log}}$ site  is representable by the log scheme $G\times_k U_i$.

\end{defn}

\begin{rem}
It is not known if torsors over finite group schemes om Kummer log flat topology are representable. Therefore, we will be dealing with sheaf torsors.
\end{rem}

\section{Log Nori fundamental group scheme}\label{sec6}

Let $X$ be an fs log scheme log smooth over a field $k$ and $x \in X(k)$ . Let $\mathcal{FT}(X,x)$ denote the following category: the objects of this category are triples of the form $(S,G,\alpha)$ where $S$ is a $G$-torsor on $X_{fl}^{log}$ site and $G$ is a classical finite group scheme over the log point $(\spec(k), e)$ with strict structure morphism and $\alpha$ is a trivialization $S|_{x} \cong G$ and morphisms between $(S_1, G_1,\alpha_1)$ and $(S_2, G_2, \alpha_2)$ are of the form $(f,g )$ where $g$ is group scheme homomorphism from $G_1$ to $G_2$ and $g$ is $f$-morphism which respects the trivialization.

Now, we can define the \textit{log Nori fundamental group scheme} of $X$ as follows

$$ \pi^{N}_{log}(X) = \lim_{(S,G,\alpha) \in Ob(\mathcal{FT}(X,x))} G.$$

In general, this limit may not exists. But it does exist if the index over which the limit is taken is small and cofiltered.

To prove that the category $\mathcal{FT}(X,x)$ is cofiltered, we will show that the category $\mathcal{FT}(X)$ has fibre products. That is we will show that, if $T_1 = (S_1,G_1,t_1)$, $T_2 = (S_2,G_2,t_2)$ and $T = (S,G,t)$ are objects of $\mathcal{FT}(X)$ and $(f_1,g_1)$ is a morphism from $T_1$ to $T$ and  $(f_2,g_2)$ is a morphism from $T_2$ to $T$, then $T_1 \times_T T_2$ is $(S_1\times_S S_2, G_1 \times_G G_2, t_1\times_t t_2)$ is in $\mathcal{FT}(X,x)$.

\begin{thm}
\label{torsorsfibreproduct}
$S_1\times_S S_2$ is a $G_1 \times_G G_2$ torsor . In other words,  $(S_1\times_S S_2, G_1 \times_G G_2, t_1\times_t t_2)$ is in $\mathcal{FT}(X,x)$.
\end{thm}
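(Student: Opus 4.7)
The plan is to produce a Kummer log flat cover of $X$ on which $S_1 \times_S S_2$ becomes the trivial $(G_1 \times_G G_2)$-torsor. The component-wise action of $G_1 \times_G G_2$ on the fiber product, together with the induced point $t_1 \times_t t_2$ over $x$, supplies the sheaf-theoretic data immediately, so the essential task is to verify local triviality in the log flat topology.

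I would build such a cover in stages. First, pick a Kummer log flat cover $U_0 \to X$ on which $S_1$ trivializes, so $S_1|_{U_0} \cong G_1 \times_k U_0$. The $g_1$-equivariant map $f_1 \colon S_1 \to S$ transports the identity section of $S_1|_{U_0}$ to a section of $S|_{U_0}$, which in turn trivializes $S|_{U_0} \cong G \times_k U_0$ so that $f_1|_{U_0}$ becomes the canonical map $(\gamma,u) \mapsto (g_1(\gamma),u)$. Refine to a further Kummer log flat cover $U_1 \to U_0$ that trivializes $S_2$; then $f_2|_{U_1}$ acquires the form $(\gamma,u) \mapsto (g_2(\gamma)\,h(u),u)$ for a well-defined $h \in G(U_1)$. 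Pull back along the classical scheme $\{\gamma \in G_2 \mid g_2(\gamma) = h\}$, which is representable as a $\ker(g_2)$-torsor over its domain of definition, to obtain a Kummer log flat cover $U \to U_1$ on which $h$ lifts to an element of $G_2(U)$; twisting the trivialization of $S_2|_U$ by this lift replaces $h$ by the identity, and a direct calculation identifies $(S_1 \times_S S_2)|_U$ with $(G_1 \times_G G_2) \times_k U$.

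The principal difficulty is keeping everything inside the Kummer log flat site while navigating fs fiber products: each intermediate cover must remain Kummer log flat over $X$, and the various trivializations must descend compatibly. This is precisely where Proposition \ref{redcov} is invoked. After base changing along a surjective Kummer log flat morphism $T \to X$ that factors through a classically flat morphism onto a reduced log scheme $X_n$, the problem reduces to one about classical schemes and classical torsors for the finite group scheme homomorphism $g_2 \colon G_2 \to G$, where representability of the fiber over $h$ and fppf descent are standard. Stability of Kummer log flat morphisms under fs base change then allows each stage of the refinement to remain in the log flat site, and gluing the resulting local trivializations yields the torsor structure on $S_1 \times_S S_2$.
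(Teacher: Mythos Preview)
Your direct trivialization argument has a genuine gap at the passage from $U_1$ to $U$. You pull back along the scheme $\{\gamma \in G_2 : g_2(\gamma) = h\}$ and treat it as a $\ker(g_2)$-torsor, but it is only a torsor over $U_1$---in particular, only faithfully flat over $U_1$---if $h$ lands pointwise in the image of $g_2$. Nothing in your setup forces this: when $g_2 \colon G_2 \to G$ is not surjective, $h(u)$ may lie outside $g_2(G_2)$ and the fiber is empty there, so your ``cover'' $U \to U_1$ is not a cover at all. The phrase ``over its domain of definition'' concedes this without repairing it. This is precisely the nontrivial content of Theorem~\ref{thm::nori}: on a reduced connected scheme with a base point, one argues that the locus where $S_1 \times_S S_2$ has nonempty fibers is open and closed and contains $x$, hence is everything. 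That argument needs reducedness and is not ``standard fppf descent''.

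The paper's proof does not attempt your explicit lifting of $h$. It uses Proposition~\ref{redcov} to produce the reduced $X_n$ together with a classically flat surjection $T \to X_n$, notes that $S_1, S_2, S$ restricted to $X_n$ are then classical fppf torsors (being trivial on the fppf cover $T$), and applies Nori's Theorem~\ref{thm::nori} on the reduced scheme $X_n$ to conclude that $(S_1 \times_S S_2)|_{X_n}$ is a classical $G_1 \times_G G_2$-torsor. Your final paragraph correctly points toward the reduction via Proposition~\ref{redcov}, but the step you then dismiss as routine is exactly Theorem~\ref{thm::nori}; you should invoke it in place of the lifting argument, which does not go through without it.
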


\begin{proof}
Let $Y \to X$ be a Kummer log flat covering such that the torsors $S_1$, $S_2$ and $S$ are trivial $G_1$, $G_2$ and $G$-torsors respectively.
 Now using Proposition \ref{redcov} we get $X_n \to X$ and $T \to Y$ such that the $T \to X_n$ is classically flat. 
 So, if we restrict torsors $S_1, S_2$ and $S$ to $X_n$ they are torsors in the fppf topology of $X_n$ as the torsors are trivial when restricted to $T$ and $T$ is fppf cover of $X_n$. Moreover since $X_n$ is reduced this implies $S_1 \times_S S_2$ is a classical fppf $G_1 \times_G G_2$-torsor on $X_n$ by Theorem \ref{thm::nori}. Therefore, $S_1 
 \times_S S_2$ when restricted to $T$ is a classical fppf torsor and the theorem follows.
\end{proof}

The Theorem \ref{mainthm1} follows from Theorem \ref{torsorsfibreproduct}.

\section{Representability of torsors over solvable group schemes}\label{sec7}
\begin{defn}
We say a finite group scheme is solvable if it admits a subnormal series

$$ G =G_0 \supset G_1 \ldots \supset G_n ={1} $$

such that the quotients $G_i/ G_{i+1}$ are abelian.

\end{defn}

In this section we prove the following result: if $T$ is a $G$-log-flat torsor over a fs log scheme $X$ where $G$ is 
a solvable finite flat  group scheme, then $T$ is representable by an fs log scheme over $X$. We use the following 
result proved in \cite{kato2019logarithmic} to prove the above claim

\begin{prop}
Let $X$ be an fs log scheme which is locally Noetherian as a scheme. Let
$G$ be a finite flat commutative group scheme over the underlying scheme of X, which we
endow with the inverse image of $M_X$ , and let $F$ be a $G$-log flat torsors
over $X$. Then, $F$ is representable by an fs log scheme over $X$
which is log flat of Kummer type and whose underlying scheme is finite over that of $X$.
\end{prop}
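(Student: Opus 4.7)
The plan is to establish representability by reducing to basic building blocks via devissage and gluing via descent from a Kummer log flat cover. First, since representability by an fs log scheme is local on $X$ for the fppf topology, I may assume $X$ is affine and carries a global fs chart $a : X \to \spec(\mathbb{Z}[P])$. Pick a Kummer log flat trivializing cover $Y \to X$ for the sheaf torsor $F$, and refine $Y$ using Lemma \ref{lem::chart} to obtain a chart square with Kummer monoid map $P \to Q$ and a strict, classically faithfully flat morphism $Y \to X \times^{\text{fs}}_{\spec(\mathbb{Z}[P])} \spec(\mathbb{Z}[Q])$. The fs log scheme $X' := X \times^{\text{fs}}_{\spec(\mathbb{Z}[P])} \spec(\mathbb{Z}[Q])$ is then Kummer log flat over $X$ with finite flat underlying scheme and trivializes $F$; the task reduces to producing an fs log scheme $\tilde F$ over $X$ whose base change to $X'$ recovers $G \times^{\text{fs}}_X X'$ together with the given descent datum.

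Next I would handle the basic cases and then assemble. If $G$ is etale, etaleness is a strict log condition, so log flat $G$-torsors agree with classical etale $G$-torsors on $|X|$ and are represented by finite etale $|X|$-schemes equipped with the pullback log structure. If $G = \mu_n$, log Kummer theory identifies $H^1_{\text{logfl}}(X, \mu_n)$ with $M_X^{\text{gp}}/n$, and a class represented by $m \in M_X^{\text{gp}}$ is realized by the explicit Kummer cover $X \times^{\text{fs}}_{\spec(\mathbb{Z}[\mathbb{N}])} \spec(\mathbb{Z}[\tfrac{1}{n}\mathbb{N}])$ formed along the chart $\mathbb{N} \to M_X$ sending $1$ to $m$, exactly the construction of $X_n$ in Proposition \ref{redcov}. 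For the general commutative case, I would filter $G$ by the connected-etale sequence and invoke Cartier duality together with the structure theory of finite flat commutative group schemes over a Noetherian base, so that successive subquotients are either etale or of multiplicative type. Given a short exact sequence $0 \to G' \to G \to G'' \to 0$, the quotient sheaf $F/G'$ is a $G''$-torsor representable by an fs log scheme $F''$ Kummer log flat of finite type over $X$ by induction, and $F \to F''$ is a $G'$-torsor over $F''$; applying the inductive hypothesis with $X$ replaced by the still locally Noetherian $F''$ represents $F$, and composition of Kummer log flat morphisms of finite type gives the required structure over $X$.

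The main obstacle is verifying at every step that the object produced by descent or fs fibre product is genuinely fine and saturated, and that the Kummer condition is preserved. The fs pushout of monoids differs from the classical pushout in general, so saturation must be checked stalkwise. This is exactly where Lemmas \ref{fibreprodstalk} and \ref{isomonoid} enter: they compute the characteristic monoid of an fs fibre product in terms of a saturated pushout and show that the Kummer property survives saturation. Finiteness of $|\tilde F|$ over $|X|$ then follows by faithfully flat descent along $X' \to X$, since $|\tilde F| \times_{|X|} |X'| \cong G \times_{|X|} |X'|$ is finite over $|X'|$.
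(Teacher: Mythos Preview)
The paper does not prove this proposition; it quotes it verbatim as a result from \cite{kato2019logarithmic} and uses it as a black box for the subsequent induction on solvable $G$. So there is no ``paper's own proof'' to compare against, and the relevant question is whether your sketch stands on its own.

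Your first paragraph sets up the right picture: pass to an fs Kummer cover $X' = X \times^{\text{fs}}_{\spec(\mathbb{Z}[P])} \spec(\mathbb{Z}[Q])$ that trivializes $F$, and then try to descend the trivial torsor $G \times^{\text{fs}}_X X'$ along $X' \to X$. But you do not actually carry out the descent; you instead pivot to a d\'evissage argument, and that d\'evissage has a genuine gap. The claim that a finite flat commutative group scheme over an arbitrary locally Noetherian base can be filtered so that successive subquotients are \'etale or of multiplicative type is false. Already over a field of characteristic $p$ the group scheme $\alpha_p$ is neither \'etale nor multiplicative, and over a general Noetherian base the situation is worse: the connected--\'etale sequence need not exist globally, and even locally the connected part can be far from multiplicative. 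So your inductive step never reaches the ``basic cases'' you treat.

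The argument in \cite{kato2019logarithmic} goes through the descent you set up in your first paragraph, but the substance is precisely what you left unproved: effectivity of Kummer log flat descent for fs log schemes that are affine (or finite) over the base. That is a nontrivial theorem in its own right, and once it is available no d\'evissage on $G$ is needed at all, since $G \times^{\text{fs}}_X X'$ is already affine over $X'$. Your appeal to Lemmas \ref{fibreprodstalk} and \ref{isomonoid} for checking the fs and Kummer properties of the descended object is reasonable, but it presupposes that the descended object exists as a log scheme, which is exactly the missing step.
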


Now we generalize the above result to solvable group schemes.

\begin{prop}
Let $X=(X, \mathcal{O}_X, M_X)$ be an fs log scheme which is locally Noetherian as a scheme. Let
$G$ be a solvable finite  group scheme over the underlying scheme of $X$, which we
endow with the inverse image of $M_X$ , and let $F$ be a $G$-log flat torsor
over $X$. Then, $F$ is representable by an fs log scheme over $X$
which is log flat of Kummer type and whose underlying scheme is finite over that of $X$.
\end{prop}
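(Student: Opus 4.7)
The plan is to induct on the length $n$ of the subnormal series $G = G_0 \supset G_1 \supset \cdots \supset G_n = \{1\}$ with abelian quotients, using the commutative case cited above as the base. When $n = 1$, $G$ is abelian and the statement is exactly the proposition quoted from \cite{kato2019logarithmic}.

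For the inductive step, I would consider the normal subgroup scheme $G_1 \subset G$. Since $G/G_1$ is a finite abelian (hence finite flat commutative) group scheme, and since $F$ is a $G$-log flat torsor, the quotient sheaf $F/G_1$ on $X_{\mathrm{fl}}^{\log}$ carries a natural structure of $G/G_1$-torsor. By the commutative case, $F/G_1$ is representable by an fs log scheme $Y \to X$ that is Kummer log flat and whose underlying scheme is finite over $X$. The natural map $F \to F/G_1 = Y$ then exhibits $F$ as a $G_1$-torsor on $Y_{\mathrm{fl}}^{\log}$, where $G_1$ is endowed with the inverse image of $M_Y$. Because $G_1$ inherits a subnormal series $G_1 \supset G_2 \supset \cdots \supset G_n = \{1\}$ of length $n-1$ with abelian quotients, $G_1$ is solvable and finite; moreover $Y$ is locally Noetherian as a scheme (being finite over the locally Noetherian $X$). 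The inductive hypothesis applies and produces a representation of $F$ by an fs log scheme $F \to Y$ that is Kummer log flat with finite underlying scheme.

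Composing the two morphisms $F \to Y \to X$ yields the desired representation of the original torsor: the composition of two morphisms with finite underlying schemes is finite on underlying schemes, and the composition of two Kummer log flat morphisms is Kummer log flat (composition preserves log flatness and the Kummer property on the characteristic monoids). Thus $F$ is representable by an fs log scheme over $X$ which is Kummer log flat and whose underlying scheme is finite over that of $X$.

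The main obstacle I expect is the first step of the induction: verifying that $F/G_1$ really is a $G/G_1$-torsor in the Kummer log flat topology on $X$ (so that the quoted abelian result applies), and then that the induced map $F \to F/G_1$ exhibits $F$ as a $G_1$-torsor on $Y_{\mathrm{fl}}^{\log}$. Both facts are formal once one trivializes $F$ on a Kummer log flat cover $U \to X$: on $U$ the torsor $F$ becomes $G \times_k U$, the quotient by $G_1$ becomes $(G/G_1) \times_k U$, and descent through the covering gives the required torsor structures; one must just check that the covers chosen fit into the framework of the log flat site (which follows from the chart result \ref{lem::chart}). Once this is set up, the rest of the argument is purely formal bookkeeping about composition of Kummer log flat finite morphisms.
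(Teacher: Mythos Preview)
Your proposal is correct and follows essentially the same induction as the paper: quotient $F$ by $G_1$ to obtain a torsor under the abelian group $G/G_1$, apply the commutative case from \cite{kato2019logarithmic} to represent it by an fs log scheme $Y$, then use the inductive hypothesis on the $G_1$-torsor $F \to Y$. Your write-up is in fact more careful than the paper's own proof, which omits the verification that $F/G_1$ is a $G/G_1$-torsor and that the composite $F \to Y \to X$ has the claimed properties.
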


\begin{proof}
Since, $G$ is solvable it has a subnormal series $ G =G_0 \supset G_1 \ldots \supset G_n ={1} $ where the quotients $G_i/ G_{i+1}$ are abelian . We prove the above statement by induction on $n$ the length of the subnormal series. If $n=0$, the result follows by the previous theorem. For the inductive case, consider the exact sequence of finite group schemes where $1 \to G_1 \to G \to G_{ab} \to 1$ where $G_{ab}$ is abelian. Log flat torsors over $G_1$ are representable by inductive hypothesis. The quotient $ G_1  \setminus F$ is a $G_2$ log flat torsor. Since $G_2$ is abelian, by the previous theorem $ G_1  \setminus F$ is representable by an fs scheme $Y$ over $X$ which is again locally noetherian. Now, $F$ is a $G_1$-log flat torsor over $Y$. So, $F$ is respresentable by an fs scheme over $Y$.
\end{proof}

\section{Nilpotent pro-p quotient of the fundamental group scheme of $\mathbb{P}^1_k$ with log at $\{0, 1, \infty \}$ } \label{sec8}
\begin{defn}
A normal series of group schemes
$$ G =G_0 \supset G_1 \ldots \supset G_n ={1} $$ 
is called a central series if $G_i$ is normal in $G$ and $G_i/G_{i+1} \subset Z(G/G_{i+1})$. We say a group scheme is nilpotent if it has a central series. 
\end{defn}

A nilpotent group scheme by definition is solvable. So, the theorem proved in the last section implies all log-flat torsors over a nilpotent group scheme are representable.

We now define the pro-p nilpotent quotient of the log Nori Fundamental group scheme. Let $X$ be an fs log-smooth scheme over the field $k$, $\operatorname{char}(k)=p$. Let $\mathcal{N}(X,x)$ denote the category where objects are of the form $(T,G,t)$ where $T$ is $G$-log flat torsor over $X$ and $G$ is a nilpotent group scheme of rank  $p^r$ ($r \in \mathbb{N}$)  and $t$ is a trivialization $t : T|_{x} \cong G$.

\begin{defn}\label{nilpotent}
We define the pro-p nilpotent quotient of the log Nori fundamental group as follows
$$ (\pi^{N}_{log}(X,x))_{p-nilp} = \lim_{(T_i,G_i,t_i) \in Ob(\mathcal{N}(X,x))} G_i.$$
Here limit is the projective limit of group schemes\end{defn}

So, the nipotent quotient of the fundamental group scheme classifies torsors of nilpotent group schemes of rank power of $p$.

Let $k$ be an algebrically closed field of characteristic $p > 0$. Let $X= \mathbb{P}^{1}_k$ and $D = \{0, 1 , \infty \}$ . Let $U = X-D$ and let $j: U \to X$ be the inclusion. Let the log structure on $U$ be the trivial log structure and we endow $X$ with the direct image of this log structure. The main result proved in this section is the following.

\begin{thm}\label{thm::main2}
Let $k$ be an algebraically closed field of characteristic $p > 0$. Let $X= \mathbb{P}^{1}_k$ and $D = \{0, 1 , \infty \}$ . Let $U = X-D$ and let $j: U \to X$ be the inclusion. Let the log structure on $U$ be the trivial log structure and we endow $X$ with the direct image of this log structure. Then
$$(\pi^{N}_{log}(X,x))_{p-nilp} \cong \underset{n}{\varprojlim} \ \mu_{p^n}\times \mu_{p^n}.
$$
\end{thm}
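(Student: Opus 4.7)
The plan is to construct a canonical map
\[
f\colon (\pi^{N}_{\log}(X,x))_{p\text{-nilp}}\longrightarrow \varprojlim_n \mu_{p^n}\times\mu_{p^n}
\]
and verify separately that $f$ is surjective and injective, following the outline in the introduction. To construct $f$, I would compute $H^1_{\log fl}(X,\mu_{p^n})$ using the Kummer log-flat sequence
\[
0\to\mu_{p^n}\to M_X^{\mathrm{gp}}\xrightarrow{p^n}M_X^{\mathrm{gp}}\to 0
\]
together with $0\to\mathcal{O}_X^\times\to M_X^{\mathrm{gp}}\to\overline{M}_X^{\mathrm{gp}}\to 0$. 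Since $\overline{M}_X^{\mathrm{gp}}$ is a sum of three skyscraper copies of $\underline{\mathbb{Z}}$ at $\{0,1,\infty\}$ and the connecting map $\mathbb{Z}^3\to\mathrm{Pic}(\mathbb{P}^1)=\mathbb{Z}$ is the (surjective) degree map, one obtains $H^0(X,M_X^{\mathrm{gp}})\cong k^\times\oplus\mathbb{Z}^2$ and $H^1(X,M_X^{\mathrm{gp}})=0$. Since $k^\times$ is $p$-divisible ($k=\bar k$, $\mathrm{char}\,k=p$), the Kummer sequence gives $H^1_{\log fl}(X,\mu_{p^n})\cong(\mathbb{Z}/p^n)^2$, with two explicit generators given by the Kummer covers extracting $p^n$-th roots of the rational sections cutting out $\{0\}$ and $\{1\}$. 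These pointed $\mu_{p^n}\times\mu_{p^n}$-torsors are compatible as $n$ varies and assemble into $f$.

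For surjectivity, observe that if the finite-stage map $\pi^{N}_{\log}(X,x)\to\mu_{p^n}\times\mu_{p^n}$ were to factor through a proper closed subgroup scheme, then some nontrivial $\mathbb{Z}/p^n$-linear combination of the two Kummer generators would vanish in $H^1_{\log fl}(X,\mu_{p^n})$, contradicting their being a $\mathbb{Z}/p^n$-basis. Passing to the inverse limit yields surjectivity of $f$.

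Injectivity is the technical heart: I need to show every object $(T,G,t)\in\mathcal{N}(X,x)$ admits a morphism in $\mathcal{N}(X,x)$ from one of the Kummer $\mu_{p^m}\times\mu_{p^m}$-torsors, equivalently that the structural map $\pi^{N}_{\log,p\text{-nilp}}\to G$ factors through some $\mu_{p^m}\times\mu_{p^m}\to G$. I would do this in three stages. First, deduce $H^1_{\log fl}(X,\mathbb{Z}/p)=0$ and $H^1_{\log fl}(X,\alpha_p)=0$ from the Artin--Schreier and Frobenius sequences
\[
0\to\mathbb{Z}/p\to\mathbb{G}_a\xrightarrow{F-1}\mathbb{G}_a\to 0, \qquad 0\to\alpha_p\to\mathbb{G}_a\xrightarrow{F}\mathbb{G}_a\to 0,
\]
using $H^i(\mathbb{P}^1,\mathcal{O})=0$ for $i\geq 1$ and the surjectivity of $F-1$ and $F$ on $k=\bar k$. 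Second, use the decomposition of commutative finite $p$-group schemes over $\bar k$ into multiplicative, \'etale, and unipotent-local parts, combined with stage one, to conclude that every commutative $p$-torsor on $X$ comes from a product of $\mu_{p^n}$-torsors and is hence dominated by some $\mu_{p^m}\times\mu_{p^m}$. Third, for general nilpotent $G$ induct along the central series $1=G_r\subset\cdots\subset G_0=G$, using the vanishing $H^2_{\log fl}(X,\mu_{p^n})=0$ (which follows from $H^1(X,M_X^{\mathrm{gp}})=0$ and $\mathrm{Br}(\mathbb{P}^1_k)=0$ via the Kummer sequence) to trivialise successive extension obstructions.

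The main obstacle is this last, non-commutative step. Even with the $H^1$- and $H^2$-vanishing results in place, one still has to rule out that two inequivalent $G$-torsors with the same abelian pushforward produce genuinely distinct quotients of $\pi^{N}_{\log,p\text{-nilp}}$. My plan is to exploit the representability of solvable log-flat torsors established in Section~\ref{sec7}: this identifies the set of $G$-lifts of a fixed $G^{\mathrm{ab}}$-torsor $S$ with a cohomology group on $S$ viewed as an fs log scheme, and one then reduces its vanishing to the vanishing of $H^1_{\log fl}(S,\mathbb{Z}/p)$ and $H^1_{\log fl}(S,\alpha_p)$ from stage one together with an explicit analysis of $H^1_{\log fl}(S,\mu_{p^n})$ on the Kummer cover $S$. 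Carrying this through would force every $G$-torsor to be pulled back from a $\mu_{p^m}\times\mu_{p^m}$-torsor and complete the proof.
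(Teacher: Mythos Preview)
Your construction of $f$ and the surjectivity argument are essentially the same as the paper's (the paper packages the two Kummer generators as the Fermat curve tower $X_n=\{x^{p^n}+y^{p^n}=z^{p^n}\}$, but this is the same data).

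The injectivity argument, however, has a genuine gap. In stage three you write that the classification of $G$-lifts over the Kummer cover $S$ ``reduces \ldots\ to the vanishing of $H^1_{\log fl}(S,\mathbb{Z}/p)$ and $H^1_{\log fl}(S,\alpha_p)$ from stage one''. But stage one established those vanishings on $X=\mathbb{P}^1$, not on $S$. The cover $S$ is (up to log structure) the Fermat curve $X_n$ of degree $p^n$, which for $n\geq 1$ has positive genus; in particular $H^1(X_n,\mathcal{O}_{X_n})\neq 0$, so the Artin--Schreier and Frobenius sequences give $H^1_{\log fl}(X_n,\mathbb{Z}/p)\neq 0$ and $H^1_{\log fl}(X_n,\alpha_p)\neq 0$. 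Thus the lifting obstruction groups you need to kill do not vanish, and the inductive step as stated does not go through. The same issue blocks the appeal to $H^2$-vanishing: even granting $H^2_{\log fl}(X,\mu_{p^n})=0$, this controls central extensions of torsors over $X$, whereas the ambiguity in lifting along the central series lives over the intermediate cover.

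What the paper does instead is exactly to confront the nonvanishing on $X_n$. Rather than asking $H^1_{\log fl}(X_n,H)$ to be zero for $H\in\{\alpha_p,\mathbb{Z}/p,\mu_p\}$, it observes that the relevant class must be invariant under the $\mu_{p^n}\times\mu_{p^n}$-action on $X_n$ (because the conjugation action on a central $H$ is trivial), and then computes these invariants: for $H=\alpha_p$ or $\mathbb{Z}/p$ it shows (by an explicit monomial-weight argument on $H^1(X_n,\mathcal{O}_{X_n})$, testing against $R=k[t]/(t^{p^n}-1)$) that there are no nonzero invariants; for $H=\mu_p$ it shows, via an induction using the nilpotent thickening $X_{n-1}\hookrightarrow X_n$ and a truncated exponential, that the invariant part is exactly the $(\mathbb{Z}/p)^2$ pulled back from $X_0$, hence already accounted for by the Kummer tower. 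Your outline can be salvaged, but only by replacing ``vanishing on $S$'' with this equivariant analysis.
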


Let $X_n = \operatorname{Proj}k[x,y,z]/(x^{p^n}+y^{p^n}-z^{p^n})$. The log structure on $X_n$, $M_{X_n}$, is defined as follows: on the affine subsets $U_1=\spec(k[x/z.y/z]/((x/z)^{p^n}+(y/z)^{p^n}-1)), U_2=\spec(k[y/x.z/x]/((y/x)^{p^n}+(z/x)^{p^n}-1))$ and $U_3=\spec(k[x/y.z/y]/((x/y)^{p^n}+(z/y)^{p^n}-1))$ the log structure is $<\mathcal{O}^{\times}_{X_n},x/z,y/z>, <\mathcal{O}^{\times}_{X_n},x/y,z/y>$ and $<\mathcal{O}^{\times}_{X_n},y/x,z/x>$ respectively. Consider the morphism $\phi_n:X_n \to X$ where $[x:y:z]$ goes to $[x^{p^n}:y^{p^n}:z^{p^n}]$ ( the underlying scheme of $X$ is $\operatorname{Proj}k[x,y,z]/(x+y-z)=  \mathbb{P}^{1}_k $ ). Now, $\phi_n:X_n \to X$ is a $\mu_{p^n} \times \mu_{p^n}$-log flat torsor as proved below.

\begin{prop}
The fs log scheme $X_n$ is a $\mu_{p^n} \times \mu_{p^n}$-log flat torsor over $X$.
\end{prop}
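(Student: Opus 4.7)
The plan is to exhibit a natural action of $\mu_{p^n}\times\mu_{p^n}$ on $X_n$ over $X$, verify that $\phi_n$ is itself a Kummer log flat covering (so it may serve as the trivializing cover), and finally check that the action map $(\mu_{p^n}\times\mu_{p^n})\times_k X_n \to X_n \times^{\text{fs}}_X X_n$ is an isomorphism of fs log schemes. Taken together, these two properties are the defining property of a $\mu_{p^n}\times\mu_{p^n}$-sheaf torsor on $X^{\text{log}}_{\text{fl}}$, trivialized by the cover $\{\phi_n\colon X_n\to X\}$.

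First I would set up the action by $(\zeta_1,\zeta_2)\cdot[x:y:z]=[\zeta_1 x:\zeta_2 y:z]$; this is well-defined on $X_n$ because $\zeta_i^{p^n}=1$ preserves the Fermat equation, it becomes trivial after composition with $\phi_n$, and on each chart $U_i$ it multiplies the chosen generators of $M_{X_n}$ by units, hence preserves the log structure. To show $\phi_n$ is Kummer log flat, I would spell out a chart of $\phi_n$ on each affine open $V_i\subset X$ and its preimage $U_i\subset X_n$: both charts are $\mathbb{N}^2$ (generated by the two log branches locally visible in $V_i$, respectively by their chosen $p^n$-th roots on $U_i$), and the chart homomorphism $h\colon\mathbb{N}^2\to\mathbb{N}^2$ of $\phi_n$ is multiplication by $p^n$. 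Then $h$ is Kummer, the induced map $h^{\text{gp}}$ is injective with torsion-free cokernel, and by the very construction of $X_n$ as an fs base change the induced classical map $U_i\to V_i\times_{\spec\mathbb{Z}[\mathbb{N}^2]}\spec\mathbb{Z}[\mathbb{N}^2]$ is an isomorphism, hence flat; surjectivity on underlying schemes and local finite presentation are clear.

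The crux of the proof is the computation $X_n\times^{\text{fs}}_X X_n\cong X_n\times_k(\mu_{p^n}\times\mu_{p^n})$. Using Lemma \ref{fibreprodstalk} I would reduce to computing the saturated pushout of monoids $\mathbb{N}^2\xleftarrow{\times p^n}\mathbb{N}^2\xrightarrow{\times p^n}\mathbb{N}^2$. On group hulls the pushout is $\mathbb{Z}^2\oplus(\mathbb{Z}/p^n)^2$, the torsion factor $(\mathbb{Z}/p^n)^2$ being precisely the obstruction to $h^{\text{gp}}$ being an isomorphism, and saturating the positive image yields $\mathbb{N}^2\oplus(\mathbb{Z}/p^n)^2$. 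Since $\spec\mathbb{Z}[(\mathbb{Z}/p^n)^2]\cong\mu_{p^n}\times\mu_{p^n}$ and the group-like factor $(\mathbb{Z}/p^n)^2$ contributes only units to the log structure, combining this with Lemma \ref{Lem::strict} gives a chart-local isomorphism with $X_n\times_k(\mu_{p^n}\times\mu_{p^n})$. Tracking the two natural monoid injections $\iota_j\colon\mathbb{N}^2\to\mathbb{N}^2\oplus(\mathbb{Z}/p^n)^2$ (which send $(a,b)$ to $(a,b,0,0)$ and to $(a,b,a,b)$ respectively) shows that under this identification the two projections of the fs fiber product become $(y,\zeta)\mapsto y$ and $(y,\zeta)\mapsto\zeta\cdot y$; this is exactly the statement that the action map is an isomorphism.

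The main obstacle is the saturated pushout computation above, in particular pinning down how the $(\mathbb{Z}/p^n)^2$-torsion appears only after saturation and matches the Cartier dual $\mu_{p^n}\times\mu_{p^n}$. Once this chart-local identification is in hand, the local isomorphisms on the three charts $U_i$ glue without further work because the action and the two projections are all defined globally and are forced to coincide with the chart-local identifications on each $U_i$. Combined with the Kummer log flat covering property of $\phi_n$, this establishes that $X_n$ represents a $\mu_{p^n}\times\mu_{p^n}$-torsor in the log flat topology on $X$.
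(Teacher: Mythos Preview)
Your proposal is correct and follows essentially the same route as the paper: restrict to the standard affine charts, use the $\mathbb{N}^2\xrightarrow{\times p^n}\mathbb{N}^2$ chart for $\phi_n$, and identify $X_n\times^{\text{fs}}_X X_n$ with $X_n\times(\mu_{p^n}\times\mu_{p^n})$ via the saturated pushout computation $(\mathbb{N}^2\oplus_{\mathbb{N}^2}\mathbb{N}^2)^{\text{sat}}\cong\mathbb{N}^2\oplus(\mathbb{Z}/p^n)^2$. You are in fact slightly more explicit than the paper in two places---you verify that $\phi_n$ itself is a Kummer log flat cover serving as the trivializing cover, and you track the two monoid injections to check that the resulting isomorphism is the action map rather than merely an abstract isomorphism---but the underlying argument is the same.
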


\begin{proof}
It is enough to show the $X_n$ is $\mu_{p^n} \times \mu_{p^n}$-log flat torsor over $X$ when restricted to some open affine cover of $X$.
Consider the open affine cover $\mathbb{A}_k^1 \to X$ where $\mathbb{A}_k^1$ has log at $0$ and $1$. Consider $X_n^{'} = X_n \times^{\text{fs}}_{X} \mathbb{A}_k^1$. It is enough to show that $X_n^{'}$ is $\mu_{p^n} \times \mu_{p^n}$-log flat torsor over  $\mathbb{A}_k^1$. We denote by $\mathbb{N} \to \mathbb{N}^{1/p^n}$ the map $\mathbb{N} \to \mathbb{N}$ where $1$ goes to $p^n$.To prove  $X_n^{'}$ is $\mu_{p^n} \times \mu_{p^n}$-log flat torsor over $\mathbb{A}_k^1$ we need to show that $X_n^{'} \times^{\text{fs}}_{\mathbb{A}_k^1} X_n^{'} \cong X_n^{'} \times (\mu_{p^n} \times \mu_{p^n}) $.  Note that $X_n^{'} = \mathbb{A}_k^1 \times^{\text{fs}}_ {\mathbb{Z}[\mathbb{N},\mathbb{N}]} \mathbb{Z}[\mathbb{N}^{1/p^n},\mathbb{N}^{1/p^n}]$. First we compute the fibred product $X_n^{'} \times_{\mathbb{A}_k^1} X_n^{'}$ in the category of fine log schemes. Then $X_n^{'} \times_{\mathbb{A}_k^1} X_n^{'} = (\mathbb{A}_k^1 \times_ {\mathbb{Z}[\mathbb{N},\mathbb{N}]} \mathbb{Z}[\mathbb{N}^{1/p^n},\mathbb{N}^{1/p^n}]) \times_{\mathbb{A}_k^1   } (\mathbb{A}_k^1 \times_ {\mathbb{Z}[\mathbb{N},\mathbb{N}]} \mathbb{Z}[\mathbb{N}^{1/p^n},\mathbb{N}^{1/p^n}]) = \mathbb{A}_k^1 \times_ {\mathbb{Z}[\mathbb{N},\mathbb{N}]} (\mathbb{Z}[\mathbb{N}^{1/p^n},\mathbb{N}^{1/p^n}]) \times_ {\mathbb{Z}[\mathbb{N},\mathbb{N}]} \mathbb{Z}[\mathbb{N}^{1/p^n},\mathbb{N}^{1/p^n}]) = \mathbb{A}_k^1 \times_ {\mathbb{Z}[\mathbb{N},\mathbb{N}]}  \mathbb{Z}[P]$ where $P = (\mathbb{N}^{1/p^n}\oplus \mathbb{N}^{1/p^n}) \oplus_{\mathbb{N} \oplus \mathbb{N}} (\mathbb{N}^{1/p^n}\oplus \mathbb{N}^{1/p^n})$.Note that the $(P^{\text{int}})^{\text{sat}} =  (\mathbb{N}^{1/p^n} \oplus \mathbb{Z}/{p^n \mathbb{Z}})\oplus (\mathbb{N}^{1/p^n} \oplus \mathbb{Z}/{p^n \mathbb{Z}})$. So, we finally get $X_n^{'} \times^{\text{fs}}_{\mathbb{A}_k^1} X_n^{'} =  (\mathbb{A}_k^1 \times_ {\mathbb{Z}[\mathbb{N},\mathbb{N}]}  \mathbb{Z}[P]) \times_{\mathbb{Z}[P]   } \mathbb{Z}[(P^{\text{int}})^{\text{sat}}] = X_n^{'} \times (\mu_{p^n} \times \mu_{p^n})$. Similar it can be shown for other open affine cover $X-\{1, \infty\}$ and $X-\{0, \infty\}$.
\end{proof}

Also, there's map $h_n:X_{n+1} \to X_{n}$ such that $f_n:([x:y:z]) = [x^p:y^p:z^p]$. Also let $g_n$ denote the group scheme homomorphism $\mu_{p^{n+1}}\times\mu_{p^{n+1}} \to \mu_{p^{n}}\times\mu_{p^{n}}$ such $(x,y) \in \mu_{p^{n+1}}(R)\times\mu_{p^{n+1}}(R)$ goes to $(x^p,y^p) \in \mu_{p^{n}}(R)\times  \mu_{p^{n}}(R)$. Since, the group scheme $\mu_{p^{n}}\times \mu_{p^{n}}$ is abelian therefore nilpotent. So,  from the projective system of log flat torsors $(X_n,\mu_{p^n} \times \mu_{p^n},t_n)$ we get the following map 

\begin{equation}
\label{mainmap}
    f:(\pi^{N}_{log}(X,x))_{p-nilp} \to \underset{n}{\varprojlim} \ \mu_{p^n}\times \mu_{p^n}
\end{equation}

where the maps for the projective limit is given by $g_n$. 

In the following two sections, we will prove that $f$ is an isomorphism.

\subsection{ Proof of surjectivity of $f$} \label{sec8parta}
\leavevmode

First, we show that $f$ is surjective. For this it enough to show that the canonical map $f_n$, $\pi^{N}_{log}(X,x))_{p-nilp} \to  \mu_{p^n}\times \mu_{p^n}$ is surjective for every $n$. 

\begin{lem}
Suppose the canonical map from $f_n: \pi^{N}_{log}(X,x))_{p-nilp} \to  \mu_{p^n}\times \mu_{p^n}$ is not surjective. The there exists a surjective group scheme morphism $s$ from $\mu_{p^n}\times \mu_{p^n}$ to $\mu_p$ such that $s \circ f_n$ is zero morphism.
\end{lem}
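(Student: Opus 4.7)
The plan is to reduce the statement to the elementary observation that a nontrivial finite abelian $p$-group contains an element of order $p$, packaged through Cartier duality. First I would let $H \subseteq \mu_{p^n}\times\mu_{p^n}$ denote the scheme-theoretic image of $f_n$, which is a well-defined closed subgroup scheme of the finite affine group scheme on the right and, by hypothesis, is proper. Producing $s$ with $s\circ f_n=0$ is then equivalent to producing a surjection $s:\mu_{p^n}\times\mu_{p^n}\twoheadrightarrow\mu_p$ whose kernel contains $H$, so the rest of the argument takes place entirely in the category of finite diagonalizable group schemes.

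Next I would apply Cartier duality. The group scheme $\mu_{p^n}\times\mu_{p^n}$ is diagonalizable with character group $(\mathbb{Z}/p^n\mathbb{Z})^2$; under this anti-equivalence, the inclusion $H\hookrightarrow\mu_{p^n}\times\mu_{p^n}$ corresponds to a surjection $\pi:(\mathbb{Z}/p^n\mathbb{Z})^2\twoheadrightarrow H^{\vee}$, and surjections $\mu_{p^n}\times\mu_{p^n}\twoheadrightarrow\mu_p$ correspond to injections $\mathbb{Z}/p\mathbb{Z}\hookrightarrow(\mathbb{Z}/p^n\mathbb{Z})^2$. The condition that such a surjection kill $H$ translates, under duality, into the image of this injection lying in $K:=\ker(\pi)$. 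Since $H$ is a proper subgroup scheme, $H^{\vee}$ is a proper quotient and $K$ is a nontrivial finite abelian $p$-group; any such group contains an element of order exactly $p$, producing an injection $\mathbb{Z}/p\mathbb{Z}\hookrightarrow K\hookrightarrow(\mathbb{Z}/p^n\mathbb{Z})^2$. Dualizing yields the required surjection $s$, and $s|_H$ is the Cartier dual of the zero map $\mathbb{Z}/p\mathbb{Z}\to H^{\vee}$, hence trivial.

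I do not expect a genuine obstacle. The content is essentially linear algebra over $\mathbb{Z}/p^n\mathbb{Z}$, and the argument is formal once the Cartier duality dictionary between subgroup schemes of $\mu_{p^n}\times\mu_{p^n}$ and quotients of $(\mathbb{Z}/p^n\mathbb{Z})^2$ is in hand, which is standard for finite diagonalizable group schemes over the algebraically closed field $k$. The only mild technical point is that the scheme-theoretic image of the morphism $f_n$ from the profinite group scheme $(\pi^{N}_{log}(X,x))_{p-nilp}$ to the finite group scheme $\mu_{p^n}\times\mu_{p^n}$ exists as a closed subgroup scheme, which follows because $f_n$ factors through a finite quotient in the cofiltered projective system defining the source.
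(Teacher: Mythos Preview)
Your proof is correct. Both your argument and the paper's isolate the same obstruction---the nontrivial quotient $(\mu_{p^n}\times\mu_{p^n})/\operatorname{im}(f_n)$---but finish differently. The paper works with the cokernel of $f_n$ directly: it is a nontrivial finite commutative group scheme of $p$-power rank, hence admits a simple quotient, which by the classification over an algebraically closed field of characteristic $p$ is one of $\alpha_p$, $\mu_p$, $\mathbb{Z}/p\mathbb{Z}$; since this simple quotient is also a quotient of $\mu_{p^n}\times\mu_{p^n}$, it must be $\mu_p$, and composing with the cokernel map yields $s$. You instead dualize and reduce to the elementary fact that a nontrivial finite abelian $p$-group contains an element of order $p$. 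The two arguments are literally Cartier dual to one another: your kernel $K\subset(\mathbb{Z}/p^n\mathbb{Z})^2$ is the dual of the paper's cokernel, and a surjection from the cokernel onto $\mu_p$ corresponds exactly to an injection $\mathbb{Z}/p\mathbb{Z}\hookrightarrow K$. Your route is slightly more self-contained in that it bypasses the classification of simple finite group schemes; the paper's route stays inside the category of group schemes and is perhaps more in keeping with the surrounding discussion, where the trichotomy $\alpha_p$, $\mu_p$, $\mathbb{Z}/p\mathbb{Z}$ reappears in the injectivity argument.
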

\begin{proof}
Let $\operatorname{coker}(f_n):\mu_{p^n}\times \mu_{p^n} \to  \operatorname{coker}(f_n)$ be the cokernel of the morphism $f_n$. Since $f_n$ is not surjective, $\operatorname{coker}(f_n)$ is not the trivial group scheme.The $\operatorname{coker}(f_n)$ is a non-trivial finite group scheme whose rank is a power of $p$. So, it must have a simple finite quotient which is $\alpha_p$, $\mu_p$ or $\mathbb{Z}/p\mathbb{Z}$. But since any quotient of $\operatorname{coker}(f_n)$ is also a quotient of $\mu_{p^n}\times \mu_{p^n}$ so the quotient should be $\mu_p$. Therefore, there exists a surjective morphism $s'$ from  $\operatorname{coker}(f_n)$ to $\mu_p$. Then we can take $s$ to be $s' \circ  \operatorname{coker}(f_n)$. We can see that $s \circ f_n$ is the zero morphism.
\end{proof}

To prove that $f_n$ is surjective it is enough to show that for any surjective morphism $s$ from $ \mu_{p^n}\times \mu_{p^n}$ to $\mu_{p}$ the image $s_{\text{in}}(\alpha) \neq 0$ where $s_{\text{in}}:H^1(X, \mu_{p^n}\times \mu_{p^n} ) \to H^1(X, \mu_{p}) $ is the induced map on cohomology groups and $\alpha$ corresponds to the $ \mu_{p^n}\times \mu_{p^n}$-torsor $X_n$. We have the following exact sequences.

\begin{tikzcd}
0 \arrow[r] & \mu_{p^n}\times \mu_{p^n}  \arrow[r] \arrow[d,"s_{i,j}"] & M^{\text{gp}} \times M^{\text{gp}} \arrow[r,"p^n \times p^n"] \arrow[d, "t_{i,j}"] & M^{\text{gp}} \times M^{\text{gp}}   \arrow[d,"p_{i,j}"] \arrow[r] &0\\
0 \arrow[r]  &\mu_{p} \arrow[r]  & M^{\text{gp}}  \arrow[r, "p"]  &M^{\text{gp}} \arrow[r] &0
\end{tikzcd}

Here $s_{i,j}$ is the morphism where $(a,b) \in \mu_{p^n}(R) \times \mu_{p^n}(R)$ goes $a^{ip^{n-1}}b^{jp^{n-1}} \in \mu_p(R) $ and $0 \leq i,j \leq p-1$ and $(i,j) \neq (0,0)$. Note that every surjective morphism from $ \mu_{p^n}\times \mu_{p^n}$ to $\mu_{p}$ is of the form $s_{i,j}$. The map $t_{i,j}$ takes $(a,b) \in M^{\text{gp}}(U) \times M^{\text{gp}}(U) $ to  $a^{ip^{n-1}}b^{jp^{n-1}}$. The map $p_{i,j}$ takes  $(a,b) \in  M^{\text{gp}}(R) \times M^{\text{gp}}(R)$ to $a^ib^j \in M^{\text{gp}}(R)$.

Now consider the corresponding long exact sequence.

\begin{tikzcd}[column sep=tiny]
0 \arrow[r] &H^0_{logfl}(X, \mu_{p^n}\times \mu_{p^n} )  \arrow[r] \arrow[d,"s_{i,j}"] & H^0_{logfl}(X, M^{\text{gp}} \times M^{\text{gp}}) \arrow[r,"p^n \times p^n"] \arrow[d, "t_{i,j}"] & H^0_{logfl}(X, M^{\text{gp}} \times M^{\text{gp}}) \arrow[d,"p_{i,j}"] \arrow[r]  &H^1_{logfl}(X,\mu_{p^n}\times \mu_{p^n})  \arrow[d,"s_{i,j}"]\\
0 \arrow[r]  &H^0_{logfl}(X,\mu_{p^n}) \arrow[r]  & H^0_{logfl}(X, M^{\text{gp}})  \arrow[r]  &H^0_{logfl}(X,M^{\text{gp}}) \arrow[r] &H^1_{logfl}(X,\mu_{p^m}) 
\end{tikzcd}

Now $(t, t-1) \in  H^0_{logfl}(X, M^{\text{gp}} \times M^{\text{gp}})$ corresponds to the $ \mu_{p^n}\times \mu_{p^n}$-torsor $X_n$. Under the map $p_{i,j}$ ,  $(t,t-1)$ goes to $t^i(t-1)^j$ which is not a $p$-th power. Therefore, $s_{i,j}(\alpha) \neq 0 $ where $\alpha \in H^1(X,\mu_{p^n}\times \mu_{p^n})$ corresponds to the $ \mu_{p^n}\times \mu_{p^n}$-torsor $X_n$. This proves that $f_n$ is surjective.

\subsection{ Proof of injectivity of $f$} \label{sec8partb}
\leavevmode
Now we show that $f$ is injective.
Let $R$ be a $k$-algebra and $X$ be an $R$-scheme.

Consider the exact sequence of group schemes over $R$

\begin{equation}
    1 \to N \to G \to H \to 1
\end{equation}

Let $U$ be a $G$-torsor over $X$. Consider the torsor $U/N$ and assume that it is representable by the log scheme $Y$. Then $U \to Y$ is a $N$-torsor over $Y$ and $Y \to X$ is a $H$-torsor over $X$.

Let $\sigma \in H(R)$ then it induces a morphism $f_\sigma: Y \to Y$. Suppose $P = N/Q$ is an abelian quotient of $N$ where $Q$ is a normal subgroup scheme of $N$ and $G$. Then $U/Q= Z$ is a $P$-torsor over $Y$.

Now $\sigma$ acts on the group scheme $P$ by inner-automorphisms. We denote this action by $i_{\sigma}: P \to P$. Now consider the base change of the torsor $Z$ by the morphism 
$f_{\sigma}:Y \to Y$. Then we get a new $P$-torsor $Z'$ over $Y$ and $Z'= Z \times^{P} P$, here we are treating $P$ as $P$-torsor by the action 
$i_{\sigma}$. In other words, let $f_{\sigma}: H^1(Y,P) \to H^1(Y,P)$ be 
the map induced on the cohomology groups by the morphism $f_\sigma: Y \to Y$. Let $\alpha \in H^1(Y,P)$ correspond to the $P$-torsor $Z$. Then 
$f_{\sigma}( \alpha)$ corresponds to the torsor  $Z \times^{P} P$.

Now consider the map
\begin{equation}
   f:(\pi^{N}_{log}(X,x))_{p-nilp} \to \underset{n}{\varprojlim} \ \mu_{p^n}\times \mu_{p^n}.
\end{equation}

Suppose the kernel of $f$, $ \operatorname{ker}(f)$, is non-trivial. 
Then it should have a quotient $H$ where $ H=\alpha_p, \mu_p$ or $\mathbb{Z}/p \mathbb{Z}$.
Now consider the map $\phi_n: \pi^{N}_{log}(X,x))_{p-nilp} \to \mu_{p^n} \times \mu_{p^n}$. If $\phi_n$ has quotient $ H=\alpha_p$ or $\mathbb{Z}/p \mathbb{Z}$ then acts $\mu_{p^n} \times \mu_{p^n}$ acts on $H$ by inner-automorphisms and since $\pi^{N}_{log}(X,x))_{p-nilp}$ is nilpotent  these inner-automorphisms are trivial. 
Now consider the action of 
$\mu_{p^n}\times \mu_{p^n} (R)$ on $X_n \otimes R = \operatorname{Proj}(R[T_1, T_2, 
T_3]/(T_1^{p^n}+T_2^{p^n}-T_3^{p^n}))$ where $(a,b) \in \mu_{p^n}\times \mu_{p^n} (R)$ induces the map $h_{(a,b)}: X_n \otimes R \rightarrow X_n \otimes R$  where $h_{(a,b)}(T_1) = aT_1$ and $h_{(a,b)}(T_2) = bT_2$. The map $h_{(a,b)}$ also induces a corresponding map on $ H^1_{logfl}(X_n \otimes R, H)$. Then there should 
exist $ \theta \neq 0 \in H^1_{logfl}(X_n, H) \subset H^1_{logfl}(X_n \otimes R, H)$ such that $h_{(a,b)}( \theta) 
=\theta$ for all $(a,b) \in \mu_{p^n} \times \mu_{p^n}(R)$ and for all $k$-algebras $R$.

Let $X$ be a locally noetherian fs log scheme.
Let $ \epsilon: X^{\log}_{fl} \to X^{cl}_{fl}$ be a morphism of site that is induced by the functor that takes a classical scheme $U$ in $X^{cl}_{fl}$ and endow it with the inverse image of the log structure of $X$ in $X^{\log}_{fl}$. Now, let $G$ be a finite commutative group scheme of $X$ where the log structure on $G$ is the inverse image log structure on $X$. Now, we can apply the Leray spectral sequence

\begin{equation} \label{leray}
0 \to H_{fl}^1(X, G ) \to H^1_{logfl}(X, G) \to H_{fl}^0(X, R^{1}\epsilon_{*}(G) ) \to H_{fl}^2(X,G) \to H_{logfl}^2(X^, G) 
\end{equation}

To use the above exact sequence we need to compute the first higher direct image of $G$,$R^{1}\epsilon_{*}(G)$. 

For this we need this following theorem proved in Theorem 4.1 of \cite{kato2019logarithmic} .

\begin{thm}\label{thmr}
Let $X$ be an fs log scheme and assume that $X$ is locally Noetherian as a
scheme.  Let $G$ be a commutative
group scheme over the underlying scheme of $X$ satisfying either one of the following two conditions. \\
(i) $G$ is finite flat over the underlying scheme of $X$. \\
(ii) $G$ is smooth and affine over the underlying scheme of $X$. \\
 Then we have a canonical isomorphism
$$R^{1} \epsilon _{*} G 
= \underset{n \neq 0}{\varinjlim} {\Hom}(\mu_{n}, G) \otimes_{\mathbb{Z}} (\mathbb{G}_{m,\log} /\mathbb{G}_m ),
$$
where $n$ ranges over all non-zero integers and the inductive limit is taken with respect to
the canonical projections $\mu_{mn} \to \mu_{n}$.
Here $\mathbb{G}_{m}$ is the functor $T \to \Gamma(T, \mathcal{O}_{T}^ {\times} )$ on (fs/X) and $\mu_n = \operatorname{Ker} (n : \mathbb{G}_m \to \mathbb{G}_m )$
$(n \neq 0)$. The quotient $\mathbb{G}_{m,log} /\mathbb{G}_m$ here is taken in the categories of sheaves on $X^{cl}_{fl}$.
\end{thm}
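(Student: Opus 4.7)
The strategy is to compute $R^{1}\epsilon_{*}G$ stalk-by-stalk at geometric points of $X^{cl}_{fl}$. Since $R^{1}\epsilon_{*}G$ is by definition the sheafification of $U\mapsto H^{1}_{logfl}(U,G)$, after passing to a strict henselization at a geometric point $\bar x$ one may assume $X=\spec(R)$ carries a chart $P\to R$ with $P\cong \overline{M}_{X,\bar x}$ a sharp fs monoid. In this local setting every calculation will be expressed in terms of $P^{gp}$, which is precisely the stalk $(\mathbb{G}_{m,\log}/\mathbb{G}_{m})_{\bar x}$ appearing on the right-hand side.

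The first step is a Kummer trivialization result: every log-flat $G$-torsor on $X$ is trivialized by a cover of the standard form
\[
X_{n} := X\times^{\mathrm{fs}}_{\spec(\mathbb{Z}[P])}\spec(\mathbb{Z}[P^{1/n}]),
\]
where $P\hookrightarrow P^{1/n}$ corresponds to multiplication by $n$ on $P^{gp}$. This combines Lemma~\ref{lem::chart}, which produces a Kummer chart fppf-locally on any Kummer log flat trivializing cover, with representability for $G$-torsors (in the abelian case, or by induction along the central series in the general case, as in Section~\ref{sec7}) to descend the trivialization along the chart to one of the standard form.

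The second step is Cartier duality. The cover $X_{n}\to X$ is a torsor under the group scheme $D_{n}:=\operatorname{Hom}(P^{1/n,gp}/P^{gp},\mathbb{G}_{m})$, and the standard twist construction identifies the set of $G$-torsors trivialized by $X_{n}$ with $\operatorname{Hom}_{\text{grp.schm}}(D_{n},G)$. Using the canonical isomorphism $P^{1/n,gp}/P^{gp}\cong P^{gp}/nP^{gp}$ together with Cartier duality $\operatorname{Hom}(\mathbb{Z}/n,\mathbb{G}_{m})=\mu_{n}$, this rewrites as
\[
\operatorname{Hom}(D_{n},G)\;\cong\;\operatorname{Hom}(\mu_{n},G)\otimes_{\mathbb{Z}}P^{gp}/nP^{gp}.
\]
Because $\operatorname{Hom}(\mu_{n},G)$ is $n$-torsion, the tensor with $P^{gp}/nP^{gp}$ agrees with the tensor with $P^{gp}$. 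Taking the direct limit over $n$ along the transitions induced by the projections $\mu_{mn}\to\mu_{n}$, then identifying $P^{gp}$ with $(\mathbb{G}_{m,\log}/\mathbb{G}_{m})_{\bar x}$ and sheafifying, yields the claimed formula.

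The main obstacle will be the first step: showing that the standard covers $X_{n}$ are genuinely cofinal among Kummer log-flat trivializing covers, so that $H^{1}_{logfl}(X,G)$ (in the strict-local stalk) is the ascending union of the kernels $\ker(H^{1}(X,G)\to H^{1}(X_{n},G))$. Lemma~\ref{lem::chart} only produces a Kummer chart after an auxiliary fppf covering, so one has to verify that after one further Kummer refinement by some $X_{n}$ the remaining descent datum is strict, and therefore comes from the classical site and contributes trivially to $R^{1}\epsilon_{*}$. A secondary consistency check is that the formula behaves correctly in the three flavours of finite commutative $G$: it must reduce to the classical fppf answer when $\operatorname{Hom}(\mu_{n},G)=0$ for all $n$ (e.g.\ $G=\mathbb{Z}/\ell$ with $\ell$ invertible on $X$), recover the Kummer theory when $G=\mu_{n}$, and produce the correct $p$-part when $G$ is infinitesimal of height one.
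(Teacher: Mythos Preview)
The paper does not prove this theorem. It is quoted verbatim from an external reference: the sentence introducing it reads ``For this we need this following theorem proved in Theorem~4.1 of \cite{kato2019logarithmic}.'' There is therefore no in-paper proof to compare your proposal against.

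That said, your outline is essentially the argument given in the cited source. The reduction to a strictly henselian local base with a sharp chart $P$, the cofinality of the standard Kummer covers $X_n = X\times^{\mathrm{fs}}_{\spec(\mathbb{Z}[P])}\spec(\mathbb{Z}[P^{1/n}])$ among Kummer log flat trivializing covers, and the identification of the resulting \v{C}ech $1$-cocycles with $\operatorname{Hom}(D_n,G)$ via the torsor structure of $X_n\to X$ under the diagonalizable group $D_n$ are exactly the steps in Kato's proof. Your identification $\operatorname{Hom}(D_n,G)\cong\operatorname{Hom}(\mu_n,G)\otimes_{\mathbb{Z}}P^{gp}$ is also the one used there, and the passage to the limit and sheafification are routine.

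The point you flag as the ``main obstacle'' is indeed where most of the work lies, and your sketch of how to handle it is slightly optimistic. What is actually needed is not just that a given torsor trivializes over some $X_n$, but that the classical fppf cohomology of $G$ on the strictly henselian $X$ already vanishes (so that $R^1\epsilon_*G$ really computes the \emph{new} log contribution), and that over $X_n$ the torsor becomes classically trivial, not merely strict. This is where hypotheses (i) and (ii) on $G$ enter: for finite flat $G$ one uses that fppf $H^1$ vanishes over a strictly henselian base, and for smooth affine $G$ one uses Grothendieck's theorem that such torsors are \'etale-locally trivial together with henselian lifting. Your proposal does not invoke either of these, so as written it would not explain why the answer depends only on the multiplicative part $\operatorname{Hom}(\mu_n,G)$ and why no residual classical term survives.
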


To compute $H_{log_fl}^1(X, G)$ when $G = \alpha_p$, $\mathbb{Z}/p\mathbb{Z}$ we need the following simple lemma. Here $p$ is the characteristic of the field $k$.

\begin{lem}
$R^{1}\epsilon_{*}(G) = 0$ if $G = \alpha_p, \mathbb{Z}/p\mathbb{Z}$.
\end{lem}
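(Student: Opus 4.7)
The plan is to apply Theorem \ref{thmr}, which represents
\[
R^{1}\epsilon_{*}G \;=\; \varinjlim_{n\neq 0}\,\Hom(\mu_n,G)\otimes_{\mathbb{Z}}(\mathbb{G}_{m,\log}/\mathbb{G}_m).
\]
It therefore suffices to prove that the sheaf $\Hom(\mu_n,G)$ on $X^{cl}_{fl}$ vanishes for every non-zero integer $n$, in both cases $G=\alpha_p$ and $G=\mathbb{Z}/p\mathbb{Z}$. Writing $n = p^k m$ with $\gcd(m,p) = 1$, one gets a global splitting $\mu_n \cong \mu_{p^k}\times \mu_m$ (since $\gcd(p^k,m)=1$); because we are working over the field $k$ of characteristic $p$, the factor $\mu_m$ is étale over $X$ while $\mu_{p^k}$ is connected infinitesimal. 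This reduces the problem to computing $\Hom(\mu_{p^k},G)$ and $\Hom(\mu_m,G)$ separately.

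For $G=\mathbb{Z}/p\mathbb{Z}$, which is an étale constant group scheme, any morphism out of the connected group $\mu_{p^k}$ lands in the identity component of $\mathbb{Z}/p\mathbb{Z}$, which is trivial, so $\Hom(\mu_{p^k},\mathbb{Z}/p\mathbb{Z})=0$. On the other factor, working étale-locally one has $\mu_m\cong \mathbb{Z}/m\mathbb{Z}$, and $\Hom(\mathbb{Z}/m\mathbb{Z},\mathbb{Z}/p\mathbb{Z})=0$ because $\gcd(m,p)=1$. Together these give $\Hom(\mu_n,\mathbb{Z}/p\mathbb{Z})=0$.

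For $G=\alpha_p$, which is infinitesimal and unipotent, the étale factor $\mu_m$ maps only trivially into $\alpha_p$ since the maximal étale quotient of $\alpha_p$ is trivial. The substantive case is $\Hom(\mu_{p^k},\alpha_p)$, and here I would carry out a direct Hopf algebra computation: such a homomorphism corresponds to a Hopf algebra map $k[s]/(s^p)\to k[t]/(t^{p^k}-1)$ determined by sending the primitive generator $s$ to a primitive element of $k[t]/(t^{p^k}-1)$. Writing a candidate image as $x=\sum_{i=0}^{p^k-1} a_i t^i$ and using that $t$ is group-like, the primitivity condition
\[
\sum_i a_i(t^i\otimes t^i) \;=\; \sum_i a_i(t^i\otimes 1 + 1\otimes t^i)
\]
forces every $a_i$ to vanish by linear independence in the tensor product. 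Hence $\Hom(\mu_{p^k},\alpha_p)=0$.

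The one technical point to monitor is that $\Hom$ in Theorem \ref{thmr} denotes a sheaf of homomorphisms, so the vanishing must survive arbitrary base change $U\to X$; but every argument above is stable under such base change, so the sheaf really is zero and $R^{1}\epsilon_{*}G=0$ follows. I do not foresee a genuine obstacle: the lemma is in essence the standard dichotomy between groups of multiplicative type and unipotent (resp.\ étale) infinitesimal groups in characteristic $p$, packaged through Theorem \ref{thmr}.
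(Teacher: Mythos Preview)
Your proposal is correct and follows exactly the paper's approach: apply Theorem~\ref{thmr} and observe that $\Hom(\mu_n,G)=0$ for $G=\alpha_p,\,\mathbb{Z}/p\mathbb{Z}$. The paper simply asserts this vanishing as known, whereas you spell out the standard multiplicative-type versus unipotent/\'etale dichotomy in detail; there is no substantive difference.
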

\begin{proof}
From \ref{thmr} we get $R^{1} \epsilon _{*} G 
= \underset{n \neq 0}{\varinjlim} {\Hom}(\mu_{n}, G) \otimes_{\mathbb{Z}} (G_{m,\log} /G_m )$. Since $\operatorname{Hom}(\mu_{n},G)=0$  when $G = \alpha_p, \mathbb{Z}/p\mathbb{Z}$ , we get $\underset{n \neq 0}{\varinjlim} {\Hom}(\mu_{n}, G)= 0$ and the result follows.

\end{proof}

Now using \ref{leray} and the above lemma, we get 
$ H^1_{fl}(X, G ) \cong H^1_{log_fl}(X, G)$ when $G = \alpha_p, \mathbb{Z}/p\mathbb{Z}$.

To compute $H^1_{fl}(X_n, G )$ we use the following two exact sequence in the fppf topology

\begin{equation}\label{alpha} 0 \to \alpha_p \to \mathbb{G}_a \xrightarrow[]{F} \mathbb{G}_a \to 0 
\end{equation}

\begin{equation}\label{artin} 0 \to \mathbb{Z}/p\mathbb{Z} \to  \mathbb{G}_a \xrightarrow[]{F -id} \mathbb{G}_a \to 0
\end{equation}

Using \ref{alpha} we get the following long exact sequence
\begin{equation}
0 \to \Gamma(X_n, \alpha_p) \to \Gamma(X_n,\mathbb{G}_a) \xrightarrow[]{F} \Gamma(X_n,\mathbb{G}_a) \to H^1_{fl}( X_n, \alpha_p) \to H^1_{fl}( X, G_a) \to H^1_{fl}( X, G_a)
\end{equation}

\begin{equation}
0 \to 0 \to k \xrightarrow[]{F} k \to H^1_{fl}( X, \alpha_p) \to H^1_{fl}( X_n, G_a) \xrightarrow[]{F} H^1( X_n, G_a)
\end{equation}

Since, $k$ is algebraically closed the map $k \xrightarrow[]{F} k$ is surjective, therefore the map  $H^1( X_n, \alpha_p) \to H^1_{fl}( X_n, G_a))$ is injective. Also note that $H^1_{fl}( X_n, G_a) = H^1_{zar}(X_n, G_a )= H^2_{zar}(\mathbb{P}^2,\mathcal{O}(-p^n) )= (\frac{1}{T_0T_1T_2}k[\frac{1}{T_0},\frac{1}{T_1}, \frac{1}{T_2}])_{(-p^n)}$. 

We now prove the following proposition

\begin{prop}
\label{alphafixed}
Let $R$ be a $k$-algebra then we have we have a map $i :H^1_{fl}(X_n, \alpha_p) \to H^1_{fl}(X_n \otimes_k R, \alpha_p)$. Also for every $(a,b) \in \mu_{p^n}(R) \times \mu_{p^n}(R)$, $h_{(a,b)}$ induces map $h_{(a,b)}:  H^1_{fl}(X_n \otimes_k R, \alpha_p) \to H^1_{fl}(X_n \otimes_k R, \alpha_p) $.

Let $H^1_{fl}(X_n, \alpha_p)^{\mu_{p^n}(R) \times \mu_{p^n}(R)} = \{ x \in H^1(X_n, \alpha_p):\text{for all } (a,b) \in \mu_{p^n}(R) \times \mu_{p^n}(R)\text{ , } h_{(a,b)}(i(x)) =i(x) \}$. There exists a $k$-algebra $R$ such that 
\begin{equation*}
H^1_{fl}(X_n, \alpha_p)^{\mu_{p^n}(R) \times \mu_{p^n}(R)} = 0
\end{equation*}
\end{prop}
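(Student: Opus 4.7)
The plan is to make the action of $h_{(a,b)}$ on $H^1_{fl}(X_n,\alpha_p)$ explicit as a diagonal action on a $k$-basis of monomials inherited from the Čech computation of $H^1(X_n,\mathbb{G}_a)$, and then to exhibit a single $k$-algebra $R$ whose tautological $\mu_{p^n}$-points act on every basis vector with a nontrivial scalar, ruling out any nonzero common fixed vector.

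First I would reduce to $\mathbb{G}_a$-coefficients. The Artin--Schreier-style sequence $0 \to \alpha_p \to \mathbb{G}_a \xrightarrow{F} \mathbb{G}_a \to 0$, combined with $H^0(X_n,\mathbb{G}_a) = k$ and surjectivity of $F$ on $k$ (since $k$ is algebraically closed), gives an injection $H^1_{fl}(X_n,\alpha_p) \hookrightarrow H^1_{fl}(X_n,\mathbb{G}_a)$, equivariant for any automorphism of $X_n$. The excerpt already identifies $H^1(X_n,\mathbb{G}_a)$ with $H^2(\mathbb{P}^2,\mathcal{O}(-p^n))$, which has the Čech $k$-basis $\{T_1^{-b_1}T_2^{-b_2}T_3^{-b_3} : b_i \geq 1,\ b_1+b_2+b_3 = p^n\}$. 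The automorphism $h_{(a,b)}$ is the restriction of the $R$-linear change of coordinates $T_1 \mapsto aT_1$, $T_2 \mapsto bT_2$, $T_3 \mapsto T_3$ of $\mathbb{P}^2_R$, which preserves the Fermat equation since $a^{p^n} = b^{p^n} = 1$; hence it acts on the basis by the diagonal character
\[
T_1^{-b_1}T_2^{-b_2}T_3^{-b_3} \;\longmapsto\; a^{-b_1}b^{-b_2}\,T_1^{-b_1}T_2^{-b_2}T_3^{-b_3}.
\]

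Next I would pick $R$ to be the Hopf algebra representing $\mu_{p^n}\times \mu_{p^n}$, namely $R = k[X,Y]/(X^{p^n}-1,Y^{p^n}-1)$, with its tautological point $(X,Y) \in \mu_{p^n}(R)\times \mu_{p^n}(R)$. Setting $u = X-1$, $v = Y-1$, the identity $X^{p^n}-1 = (X-1)^{p^n}$ in characteristic $p$ gives $R \cong k[u,v]/(u^{p^n},v^{p^n})$. The standard fact $(1+u)^p = 1+u^p$, iterated, shows $X^m = (1+u)^m = 1$ in $k[u]/(u^{p^n})$ if and only if $p^n \mid m$; hence for $1 \leq b_1, b_2 \leq p^n-1$ the scalar $X^{-b_1}Y^{-b_2} - 1$ is \emph{nonzero} in $R$.

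Finally I would combine these ingredients via flat base change. Since $R$ is flat over $k$, we have $H^1(X_n\otimes_k R,\mathbb{G}_a) = H^1(X_n,\mathbb{G}_a)\otimes_k R$, and $i$ sends $x = \sum c_{b_1,b_2,b_3}\,T_1^{-b_1}T_2^{-b_2}T_3^{-b_3}$ to the same formal expression in the tensor product. The condition $h_{(X,Y)}(i(x)) = i(x)$ becomes
\[
\sum c_{b_1,b_2,b_3}\bigl(X^{-b_1}Y^{-b_2}-1\bigr)\,T_1^{-b_1}T_2^{-b_2}T_3^{-b_3} \;=\; 0
\]
in $H^1(X_n,\mathbb{G}_a)\otimes_k R$. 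By $k$-linear independence of the basis monomials, this forces $c_{b_1,b_2,b_3}\bigl(X^{-b_1}Y^{-b_2}-1\bigr) = 0$ in $R$ for every index. The constraint $1\leq b_1,b_2 \leq p^n-1$ makes the second factor nonzero in $R$; multiplication by any nonzero $c\in k$ is injective on the $k$-vector space $R$, so every $c_{b_1,b_2,b_3}$ vanishes, forcing $x = 0$. The main obstacle is the character calculation inside the Artinian ring $R$, i.e.\ showing that the proposed $R$ really separates all relevant exponents; once that is pinned down, the rest is formal linear algebra combined with the Čech description of $H^1(X_n,\mathbb{G}_a)$ already recorded in the excerpt.
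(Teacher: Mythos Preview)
Your proof is correct and follows essentially the same approach as the paper: reduce to $H^1(X_n,\mathbb{G}_a)$ via the $\alpha_p$-sequence, use the monomial $\check{\mathrm{C}}$ech basis on which $h_{(a,b)}$ acts diagonally, and test against the tautological $\mu_{p^n}$-point in a group algebra. The only difference is cosmetic: the paper takes $R=k[t]/(t^{p^n}-1)$ with the single point $(a,b)=(t,1)$ (which already moves every basis monomial since $1\le b_1\le p^n-2$), whereas you use the full Hopf algebra $k[X,Y]/(X^{p^n}-1,Y^{p^n}-1)$ with $(X,Y)$; your version is more explicit about why no character is trivial, but the underlying argument is identical.
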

\begin{proof}
We need to show that there exists a $k$-algebra $R$ and  $(a,b) \in  \mu_{p^n}(R) \times \mu_{p^n}(R)$ such that $h_{(a,b)}(i(x)) \neq i(x)$ for all $x \in H^1(X_n, \alpha_p), x \neq 0$.

We start the with following exact sequences

$ 0 \to H^1_{fl}(X_n, \alpha_p) \to H^1_{fl}(X_n, \mathbb{G}_a)$,

Also there is the map $i:H^1(X_n, \mathbb{G}_a) \to H^1(X_n \otimes_k R, \mathbb{G}_a)$. Since, $\mathbb{G}_a$ is a quasi-coherent sheaf we have $H^1_{fl}(X_n \otimes_k R, \mathbb{G}_a) = H^1_{fl}(X_n , \mathbb{G}_a) \otimes_k R$ and $i$ is injective. Then we get the following commutative diagram.

\begin{tikzcd}
H^1_{fl}(X_n, \alpha_p) \arrow[r,"p"] \arrow[d, "i"] & H^1(X_n, \mathbb{G}_a) \arrow[d, "i"]\\
H^1_{fl}(X_n \otimes_k R, \alpha_p) \arrow[r, ,"p"] \arrow[d, "h_{(a,b)}"]  & H^1_{fl}(X_n \otimes_k R, \mathbb{G}_a) \arrow[d, "h_{(a,b)}"] \\
H^1_{fl}(X_n \otimes_k R, \alpha_p) \arrow[r,,"p"]  & H^1_{fl}(X_n \otimes_k R, \mathbb{G}_a)  
\end{tikzcd}

Let $x \in H_{fl}^1(X_n, \alpha_p), x \neq 0$, then if $h_{(a,b)}(i((p(x))) \neq i(p(x))$ then it is clear that $h_{(a,b)}(i(x)) \neq h_{(a,b)}(i(x))$. So, if we have to show for all $x \in H^1_{fl}(X_n \otimes_k R, \mathbb{G}_a), x \neq 0$, $h_{(a,b)}(x) \neq x$ for some $(a,b) \in \mu_{p^n}(R) \times \mu_{p^n}(R)$.

Now, $H^1_{fl}(X_n \otimes_k R, \mathbb{G}_a) = \left (\frac{1}{T_0T_1T_2}R[\frac{1}{T_0},\frac{1}{T_1}, \frac{1}{T_2}] \right )_{(-p^n)}$ and the map $h_{(a,b)}$ takes any monomial $\frac{1}{T_0^xT_1^yT_2^z}$ to $\frac{1}{a^xT_0^xb^yT_1^yT_2^z}$.

Now, we can take our $R$ to be $k[t]/(t^{p^n}-1)$ and $(a,b)= (t,1) \in \mu_{p^n} \times \mu_{p^n}(R)$ and we get our result.
\end{proof}

Now since $H^1_{fl}(X_n, \alpha_p) = H^1_{log fl}(X_n, \alpha_p)$ we get the following proposition

\begin{prop}
\label{alphafixedlog}
Let $R$ be a $k$-algebra then we have we have a map $i :H^1_{logfl}(X_n, \alpha_p) \to H^1_{logfl}(X_n \otimes_k R, \alpha_p)$. Also for every $(a,b) \in \mu_{p^n}(R) \times \mu_{p^n}(R)$, $h_{(a,b)}$ induces map $h_{(a,b)}:  H^1_{logfl}(X_n \otimes_k R, \alpha_p) \to H^1_{logfl}(X_n \otimes_k R, \alpha_p) $.

Let $H^1_{logfl}(X_n, \alpha_p)^{\mu_{p^n}(R) \times \mu_{p^n}(R)} = \{ x \in H^1_{logfl }(X_n, \alpha_p):\text{for all } (a,b) \in \mu_{p^n}(R) \times \mu_{p^n}(R)\text{ , } h_{(a,b)}(i(x)) =i(x) \}$. There exists a $k$-algebra $R$ such that 
\begin{equation*}
H^1_{logfl}(X_n, \alpha_p)^{\mu_{p^n}(R) \times \mu_{p^n}(R)} = 0
\end{equation*}
\end{prop}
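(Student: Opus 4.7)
My plan is to deduce Proposition \ref{alphafixedlog} from Proposition \ref{alphafixed} by passing through the comparison isomorphism between classical fppf cohomology and log flat cohomology for $\alpha_p$, which is already in hand from the preceding lemma and the Leray spectral sequence \eqref{leray}.

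First, I would apply \eqref{leray} to the sheaf $\alpha_p$ on both $X_n$ and its base change $X_n\otimes_k R$. Since $\operatorname{Hom}(\mu_m,\alpha_p)=0$ for all nonzero $m$, Theorem \ref{thmr} gives $R^1\epsilon_*\alpha_p = 0$, and this vanishing is preserved by base change to any $k$-algebra $R$ because the formula in Theorem \ref{thmr} is formed from functors (Hom-sheaves and $\mathbb{G}_{m,\log}/\mathbb{G}_m$) that pull back correctly. Consequently \eqref{leray} yields canonical isomorphisms
\begin{equation*}
H^1_{fl}(X_n,\alpha_p)\xrightarrow{\sim} H^1_{logfl}(X_n,\alpha_p), \qquad H^1_{fl}(X_n\otimes_k R,\alpha_p)\xrightarrow{\sim} H^1_{logfl}(X_n\otimes_k R,\alpha_p).
\end{equation*}

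Next, I would check that these comparison isomorphisms are natural. The base-change map $i$ is induced by the morphism of log schemes $X_n\otimes_k R\to X_n$ (endowed with inverse image log structure), and for each $(a,b)\in \mu_{p^n}(R)\times\mu_{p^n}(R)$ the automorphism $h_{(a,b)}$ is a morphism of log schemes since it is given on each affine chart of $X_n\otimes_k R$ by rescaling the monoid generators that define the log structure. By the naturality of the Leray spectral sequence in morphisms of sites (both versions of $i$ and of $h_{(a,b)}$ sit in commuting squares with the comparison isomorphisms), the displayed isomorphisms intertwine the classical and log flat versions of both $i$ and $h_{(a,b)}$.

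Finally, the fixed-point set $H^1_{logfl}(X_n,\alpha_p)^{\mu_{p^n}(R)\times\mu_{p^n}(R)}$ therefore corresponds under the comparison to the fixed-point set $H^1_{fl}(X_n,\alpha_p)^{\mu_{p^n}(R)\times\mu_{p^n}(R)}$, and taking $R=k[t]/(t^{p^n}-1)$ with $(a,b)=(t,1)$ as in Proposition \ref{alphafixed} makes this zero. The only real thing to be careful about is the naturality in the second paragraph — the content is essentially formal, but one needs to ensure the $h_{(a,b)}$ are genuine morphisms in the log category (which they are, since the monoid generators are permuted up to a unit), so that the spectral sequence comparison is equivariant for the action.
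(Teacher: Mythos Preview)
Your proposal is correct and follows exactly the paper's approach: the paper simply remarks that $H^1_{fl}(X_n,\alpha_p)\cong H^1_{logfl}(X_n,\alpha_p)$ (from the vanishing of $R^1\epsilon_*\alpha_p$ and the Leray sequence) and deduces Proposition~\ref{alphafixedlog} immediately from Proposition~\ref{alphafixed}. You are in fact more careful than the paper in spelling out that the comparison isomorphism must also hold after base change to $R$ and be natural for $i$ and $h_{(a,b)}$; the paper leaves these compatibilities implicit.
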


We also have an equivalent proposition for $\mathbb{Z}/p\mathbb{Z}$.

\begin{prop}
\label{zpzfixedlog}
Let $R$ be a $k$-algebra then we have we have a map $i :H^1_{logfl}(X_n, \mathbb{Z}/p\mathbb{Z}) \to H^1_{logfl}(X_n \otimes_k R, \mathbb{Z}/p\mathbb{Z})$. Also for every $(a,b) \in \mu_{p^n}(R) \times \mu_{p^n}(R)$, $h_{(a,b)}$ induces map $h_{(a,b)}:  H^1_{logfl}(X_n \otimes_k R, \mathbb{Z}/p\mathbb{Z}) \to H^1_{logfl}(X_n \otimes_k R, \mathbb{Z}/p\mathbb{Z}) $.

Let $H^1_{logfl}(X_n, \mathbb{Z}/p\mathbb{Z})^{\mu_{p^n}(R) \times \mu_{p^n}(R)} = \{ x \in H^1_{logfl }(X_n, \mathbb{Z}/p\mathbb{Z}):\text{for all } (a,b) \in \mu_{p^n}(R) \times \mu_{p^n}(R)\text{ , } h_{(a,b)}(i(x)) =i(x) \}$. There exists a $k$-algebra $R$ such that 
\begin{equation*}
H^1_{logfl}(X_n, \mathbb{Z}/p\mathbb{Z})^{\mu_{p^n}(R) \times \mu_{p^n}(R)} = 0
\end{equation*}
\end{prop}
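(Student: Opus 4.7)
The plan is to follow the template of Propositions \ref{alphafixed} and \ref{alphafixedlog}, with the Kummer-type sequence \eqref{alpha} replaced by the Artin-Schreier sequence \eqref{artin}. As a first step I would invoke Theorem \ref{thmr}: since $\operatorname{Hom}(\mu_n, \mathbb{Z}/p\mathbb{Z}) = 0$ for every $n$, one gets $R^1\epsilon_*(\mathbb{Z}/p\mathbb{Z}) = 0$, so the Leray sequence \eqref{leray} gives a canonical isomorphism $H^1_{logfl}(X_n, \mathbb{Z}/p\mathbb{Z}) \cong H^1_{fl}(|X_n|, \mathbb{Z}/p\mathbb{Z})$ that is natural in base change to any $k$-algebra $R$ and compatible with the $h_{(a,b)}$-action. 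This reduces the problem to the classical flat topology.

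Next, from \eqref{artin} the long exact sequence in fppf cohomology reads
\begin{equation*}
H^0_{fl}(X_n\otimes_k R, \mathbb{G}_a) \xrightarrow{F-\mathrm{id}} H^0_{fl}(X_n\otimes_k R, \mathbb{G}_a) \to H^1_{fl}(X_n\otimes_k R, \mathbb{Z}/p\mathbb{Z}) \to H^1_{fl}(X_n\otimes_k R, \mathbb{G}_a).
\end{equation*}
Since $H^0_{fl}(X_n\otimes_k R, \mathbb{G}_a) = R$ (by flat base change from $H^0(X_n, \mathcal{O}) = k$), an injection $H^1_{fl}(X_n \otimes_k R, \mathbb{Z}/p\mathbb{Z}) \hookrightarrow H^1_{fl}(X_n \otimes_k R, \mathbb{G}_a) = H^1_{fl}(X_n, \mathbb{G}_a) \otimes_k R$ follows once $F - \mathrm{id}\colon R \to R$ is surjective. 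For $R = k[t]/(t^{p^n}-1) \cong k[u]/u^{p^n}$ (with $u = t-1$) this surjectivity is checked by a recursion on the coefficients in the $u$-expansion: for $j$ with $p \nmid j$ the equation determining $c_j$ is linear in $c_j$ alone, while for $j = pi$ the coefficient $c_j$ is determined from the already-known $c_i$; the constant term reduces to a standard Artin-Schreier equation over $k$, solvable because $k$ is algebraically closed. With the injection established, the calculation from Proposition \ref{alphafixed} transfers verbatim: under $h_{(t,1)}$ the monomial $\frac{1}{T_0^x T_1^y T_2^z}$ is scaled by $t^{-x}$, and since $1 \le x \le p^n - 2$ one has $t^{-x} - 1 \neq 0$ in $R$, so no non-zero element of $H^1_{fl}(X_n, \mathbb{G}_a) \otimes_k R$ coming from $H^1_{fl}(X_n, \mathbb{G}_a)$ is $h_{(t,1)}$-fixed. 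Consequently, no non-zero image of $H^1_{fl}(X_n, \mathbb{Z}/p\mathbb{Z})$ inside $H^1_{fl}(X_n \otimes_k R, \mathbb{Z}/p\mathbb{Z})$ is fixed, which is the desired statement.

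The one genuinely new point compared with Proposition \ref{alphafixed} is verifying the surjectivity of $F - \mathrm{id}$ on $R$; the rest propagates formally. I note in passing that an even shorter argument is available: in characteristic $p$ one has $x^{p^n} + y^{p^n} - z^{p^n} = (x+y-z)^{p^n}$, so $|X_n|_{\text{red}} = \mathbb{P}^1_k$, and since $\mathbb{Z}/p\mathbb{Z}$ is \'etale, topological invariance of the \'etale site combined with $\pi_1^{\text{et}}(\mathbb{P}^1_k) = 1$ gives $H^1_{fl}(|X_n|, \mathbb{Z}/p\mathbb{Z}) = 0$, making the fixed subspace trivial for any $R$. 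I would still prefer the Artin-Schreier route above for parallelism with the $\alpha_p$ case, but this shortcut serves as a sanity check.
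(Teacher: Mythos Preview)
Your argument is correct and follows precisely the template the paper intends by ``Similar to Proposition~\ref{alphafixedlog}'': reduce from log-flat to classical flat cohomology via $R^1\epsilon_*(\mathbb{Z}/p\mathbb{Z})=0$, then embed into $H^1_{fl}(X_n,\mathbb{G}_a)$ using the Artin--Schreier sequence in place of the Frobenius sequence, and reuse the monomial computation from Proposition~\ref{alphafixed}. One minor remark: the surjectivity of $F-\mathrm{id}$ on $R$ that you single out as the ``genuinely new point'' is not actually needed---exactly as in the $\alpha_p$ case, only surjectivity on $H^0(X_n,\mathcal{O}_{X_n})=k$ is used, to obtain the injection $H^1_{fl}(X_n,\mathbb{Z}/p\mathbb{Z})\hookrightarrow H^1_{fl}(X_n,\mathbb{G}_a)$ over $k$; the commutative square with $i$ and $h_{(a,b)}$ then finishes without any injectivity after base change to $R$. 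Your \'etale shortcut (topological invariance plus $\pi_1^{\mathrm{\acute{e}t}}(\mathbb{P}^1_k)=1$) is a valid and strictly simpler alternative.
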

\begin{proof}
Similar to Proposition \ref{alphafixedlog}.
\end{proof}

Now we will prove a proposition similar to Proposition \ref{alphafixedlog} but for $H^1_{logfl}(X_n,\mu_p)$. To prove this we will use induction. First note that the there exists a closed embedding
$j: X_{n-1} \otimes_{k} R \to X_{n} \otimes_{k} R$ which is induced by the following morphism of graded rings $\phi:R[T_0,T_1,T_2]/(T_0^{p^{n}}+T_1^{p^{n}}-T_2^{p^{n}}) \to R[T_0,T_1,T_2]/(T_0^{p^{n-1}}+T_1^{p^{n-1}}-T_2^{p^{n-1}})$ where $\phi(T_0) = T_0,\phi(T_1) =T_1, \phi(T_2) = T_2$.

We get the following exact sequence

\begin{equation}
\label{closedembedding}
0 \to I \to \mathcal{O}_ {X_{n} \otimes_{k} R} \to j_{*} \mathcal{O}_ {X_{n-1} \otimes_{k} R} \to 0
\end{equation}
Note that $I^p=0$.
\begin{lem}
\label{exactsequence}
The following sequence of Zariski sheaves is exact
\begin{equation}
0 \to I \to \mathcal{O}^{\times}_ {X_{n} \otimes_{k} R} \to j_{*} \mathcal{O}^{\times}_ {X_{n-1} \otimes_{k} R} \to 0
\end{equation}

Here the first map from $I(U)$ to $\mathcal{O}^{\times}_ {X_{n-1} \otimes_{k} R}(U)$ is given by the truncated exponential sequence $exp_{p-1}(x) = \sum_{i=0}^{p-1}x^i/i!$. The group structure in $I(U)$ is additive where as the group structure in $\mathcal{O}^{\times}_ {X_{n-1} \otimes_{k} R}(U)$ is multiplicative.
\end{lem}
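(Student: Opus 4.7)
The crucial input is the noted fact that $I^{p}=0$: Zariski-locally, $I$ is generated by $f = T_{0}^{p^{n-1}}+T_{1}^{p^{n-1}}-T_{2}^{p^{n-1}}$ as an element of the coordinate ring of $X_{n}\otimes_{k}R$, and in characteristic $p$ we have $f^{p}=T_{0}^{p^{n}}+T_{1}^{p^{n}}-T_{2}^{p^{n}}=0$, which is the defining relation. Consequently, for any local section $x$ of $I$, the truncated exponential $\exp_{p-1}(x)=\sum_{i=0}^{p-1}x^{i}/i!$ is a well-defined polynomial expression (the denominators $i!$ are units for $i<p$), and $\exp_{p-1}(x)=1+(\text{nilpotent})$ lies in $\mathcal{O}^{\times}_{X_{n}\otimes R}$. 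Symmetrically, the truncated logarithm $\log_{p-1}(1+y)=\sum_{i=1}^{p-1}(-1)^{i+1}y^{i}/i$ is well-defined for $y\in I$.

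My plan is to split the exactness into three short verifications. \emph{First}, I will check that $\exp_{p-1}\colon (I,+)\to (1+I,\times)$ is a group isomorphism with inverse $\log_{p-1}$. This follows from the classical formal identities $\exp(X)\exp(Y)=\exp(X+Y)$ and $\log\exp(X)=X$ after truncating modulo monomials of total degree $\geq p$: the resulting polynomial identities already hold in $\mathbb{Z}_{(p)}[X,Y]/(X,Y)^{p}$ since every denominator appearing in degree $<p$ is coprime to $p$, and the hypothesis $I^{p}=0$ means that substituting $X\mapsto x$, $Y\mapsto y$ for local sections $x,y$ of $I$ is exactly what kills the discarded tail. \emph{Second}, I identify $1+I$ with the kernel of $\mathcal{O}^{\times}_{X_{n}\otimes R}\to j_{\ast}\mathcal{O}^{\times}_{X_{n-1}\otimes R}$: a unit restricting to $1$ differs from $1$ by a section of $I$, and conversely $1+I\subset \mathcal{O}^{\times}_{X_{n}\otimes R}$ since $I$ is nilpotent. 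Combined with the first step this yields exactness at $I$ and at $\mathcal{O}^{\times}_{X_{n}\otimes R}$. \emph{Third}, the surjectivity on the right is established Zariski-locally: using \eqref{closedembedding} I lift any local unit $u$ on $X_{n-1}\otimes R$ to some section $\tilde u$ of $\mathcal{O}_{X_{n}\otimes R}$, and then $\tilde u$ is a unit because its reduction mod $I$ is a unit and $I$ is nilpotent.

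The only step requiring genuine care is the first: verifying that the \emph{truncated} (not full) exponential and logarithm are mutually inverse group morphisms on an ideal with $I^{p}=0$. The cleanest implementation is to carry out all manipulations inside the universal ring $R_{0}=\mathbb{Z}_{(p)}[X,Y]/(X,Y)^{p}$, where the identities inherited from characteristic zero remain valid, and then transport them to sections of $I$ by the evident specialization map; the vanishing $I^{p}=0$ is precisely what makes this specialization well defined. Everything beyond this is standard additive-to-multiplicative bookkeeping and the elementary fact that lifts of units through a nilpotent thickening are units.
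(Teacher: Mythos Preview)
Your proof is correct and follows essentially the same approach as the paper: both arguments use that $I^{p}=0$ to verify that the truncated exponential is a group isomorphism $(I,+)\xrightarrow{\sim}(1+I,\times)$ with inverse the truncated logarithm, identify $1+I$ as the kernel of restriction on units, and deduce surjectivity from that of \eqref{closedembedding}. Your write-up is in fact more careful than the paper's in two places---you justify the formal identities via the universal ring $\mathbb{Z}_{(p)}[X,Y]/(X,Y)^{p}$, and you explain explicitly why a lift of a unit through a nilpotent ideal is again a unit---whereas the paper simply asserts these steps.
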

\begin{proof}
We start with short exact sequence \ref{closedembedding} and get the following sequence

\begin{equation}
0 \to I \to \mathcal{O}^{\times}_ {X_{n} \otimes_{k} R} \to j_{*} \mathcal{O}^{\times}_ {X_{n-1} \otimes_{k} R} \to 0
\end{equation}

For left exactness, we need to check that the truncated exponential function $exp_{p-1}(x)$ induces an isomorphism between the sheaf $I$ and $1+I$. The truncated exponential function is a homomorphism since $exp_{p-1}(x+y) = exp_{p-1}(x).exp_{p-1}(y)$, which follows from the fact $I^p=0$. To prove it is an isomorphism we define the truncated logarithm, $\log_{p-1}(x) = \sum_{i=1}^{p-1}(-1)(x-1)^i/i$. One can check that $x =exp_{p-1}(\log_{p-1}(x)) = \log_{p-1}(exp_{p-1}(x)) $. This proves the left exactness. The right exactness follows from the right exactness of \ref{closedembedding}.
\end{proof}

Now we will prove the following proposition
\begin{prop}
\label{mufixed}
Let $R$ be a $k$-algebra then we have a map $i :H^1_{fl}(X_n, \mu_p) \to H^1_{fl}(X_n \otimes_k R, \mu_p)$. Also for every $(a,b) \in \mu_{p^n}(R) \times \mu_{p^n}(R)$, $h_{(a,b)}$ induces map $h_{(a,b)}:  H^1_{fl}(X_n \otimes_k R, \mu_p) \to H^1_{fl}(X_n \otimes_k R, \mu_p) $.

Let $H^1_{fl}(X_n, \mu_p)^{\mu_{p^n}(R) \times \mu_{p^n}(R)} = \{ x \in H^1_{fl}(X_n, \mu_p):\text{for all } (a,b) \in \mu_{p^n}(R) \times \mu_{p^n}(R)\text{ ,} h_{(a,b)}(i(x)) =i(x) \}$. There exists a $k$-algebra $R$ such that 
\begin{equation*}
H^1_{fl}(X_n, \mu_p)^{\mu_{p^n}(R) \times \mu_{p^n}(R)} = 0
\end{equation*}
\end{prop}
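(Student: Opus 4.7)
The plan is to prove Proposition \ref{mufixed} by induction on $n$, reducing the multiplicative $\mu_p$-cohomology to additive cohomology of the ideal sheaf $I$ via the truncated exponential sequence of Lemma \ref{exactsequence}, and then treating the additive part by an argument strictly parallel to Proposition \ref{alphafixed}.

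For the base case $n = 0$, $X_0 \cong \mathbb{P}^1_k$, and the Kummer sequence together with $\operatorname{Pic}(\mathbb{P}^1) = \mathbb{Z}$ (no $p$-torsion) and $k^{\times}$ being $p$-divisible (since $k$ is algebraically closed) gives $H^1_{fl}(X_0, \mu_p) = 0$, so the fixed subgroup is automatically zero.

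For the inductive step, I would apply cohomology on $X_n \otimes_k R$ to the exact sequence of Lemma \ref{exactsequence} to obtain
\begin{equation*}
H^1(X_n \otimes_k R, I) \to \operatorname{Pic}(X_n \otimes_k R) \to \operatorname{Pic}(X_{n-1} \otimes_k R).
\end{equation*}
Because $I$ is a nilpotent ideal in characteristic $p$, its image in $\operatorname{Pic}(X_n \otimes_k R)$ lies in the $p$-torsion, so combining with the Kummer sequence (and using $p$-divisibility of $k^\times$) I obtain an equivariant exact sequence
\begin{equation*}
V \to H^1_{fl}(X_n, \mu_p) \to H^1_{fl}(X_{n-1}, \mu_p),
\end{equation*}
where $V$ is a quotient of $H^1(X_n, I)$ and every map is compatible with $h_{(a,b)}$. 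By the inductive hypothesis, for some $R$ the fixed subgroup of $H^1_{fl}(X_{n-1}, \mu_p)$ under $\mu_{p^{n-1}}(R) \times \mu_{p^{n-1}}(R)$ vanishes; composing with the $p$-power map $\mu_{p^n} \to \mu_{p^{n-1}}$, the image in $H^1_{fl}(X_{n-1}, \mu_p)$ of any $\mu_{p^n}(R)$-fixed class in $H^1_{fl}(X_n, \mu_p)$ is then zero, so the class comes from $V$. It then remains to show that the fixed subspace of $V$ is trivial for a suitable $R$. Since $I$ is coherent, $H^1(X_n \otimes_k R, I) = H^1(X_n, I) \otimes_k R$, and by filtering $I$ through the powers of $f = T_0 + T_1 - T_2$, the graded pieces are twists of $\mathcal{O}_{\mathbb{P}^1}$ of known (negative) degree, so $H^1(X_n, I)$ is identified with an explicit monomial $k$-vector space on which $h_{(a,b)}$ scales each monomial by an explicit character in $a$ and $b$. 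A direct calculation as in Proposition \ref{alphafixed}, taking $R = k[t]/(t^{p^n} - 1)$ and $(a,b) = (t, 1) \in \mu_{p^n}(R) \times \mu_{p^n}(R)$, kills every nonzero monomial, completing the induction.

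The main obstacle is the bookkeeping in the last step: one must verify that the filtration of $I$ degenerates (via the associated spectral sequence) to give the claimed explicit monomial description of $H^1(X_n, I)$, and one must check that the boundary maps in both the Kummer and the exponential long exact sequences genuinely respect the $\mu_{p^n}$-equivariance, so that the reduction to the $I$-part and then to the inductive hypothesis actually goes through.
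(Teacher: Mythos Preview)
Your overall strategy---induction on $n$ via the closed embedding $j:X_{n-1}\hookrightarrow X_n$, Lemma~\ref{exactsequence}, and a reduction of the $I$-part to an additive computation as in Proposition~\ref{alphafixed}---is exactly the paper's. Two points deserve correction.

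\textbf{The equivariance step is wrong as stated.} The closed embedding $j$ sends $T_i\mapsto T_i$, so the action $h_{(a,b)}$ of $(a,b)\in\mu_{p^n}(R)^2$ on $X_n$ preserves the ideal of $X_{n-1}$ only when $a^{p^{n-1}}=b^{p^{n-1}}=1$, i.e.\ when $(a,b)\in\mu_{p^{n-1}}(R)^2$. There is no action of $\mu_{p^n}$ on $X_{n-1}$ factoring through the $p$-power map $\mu_{p^n}\to\mu_{p^{n-1}}$ compatibly with $j^*$. The correct (and simpler) argument, which is what the paper uses, is the \emph{inclusion} $\mu_{p^{n-1}}\subset\mu_{p^n}$: the inductive hypothesis furnishes $(a,b)\in\mu_{p^{n-1}}(R)^2$ that detects every nonzero class on $X_{n-1}$; viewing $(a,b)$ inside $\mu_{p^n}(R)^2$, the diagram with $j$ and $h_{(a,b)}$ genuinely commutes, and $j^*(x)\neq 0$ forces $h_{(a,b)}(i(x))\neq i(x)$.

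\textbf{The $I$-part can be done without a filtration or spectral sequence.} Rather than filtering $I$ by powers of $f=T_0^{p^{n-1}}+T_1^{p^{n-1}}-T_2^{p^{n-1}}$ and worrying about degeneration, the paper simply uses the short exact sequence $0\to I\to\mathcal{O}_{X_n}\to j_*\mathcal{O}_{X_{n-1}}\to 0$. Since $H^0(X_n,\mathcal{O})\to H^0(X_{n-1},\mathcal{O})$ is an isomorphism (both equal $k$), one gets an \emph{injection} $H^1(X_n,I)\hookrightarrow H^1(X_n,\mathcal{O}_{X_n})$, and the explicit monomial description of the target together with the choice $R=k[t]/(t^{p^n}-1)$, $(a,b)=(t,1)$ from Proposition~\ref{alphafixed} finishes the argument directly. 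This removes the obstacle you flagged.
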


\begin{proof}
We will use induction for the proof. We need to show that there exists a $k$-algebra $R$ and $(a,b) \in \mu_{p^n}(R) \times \mu_{p^n}(R)$ such that for all $x \in H^1_{fl}(X_n,\mu_p)$ and $x \neq 0$, $h(i(x)) \neq i(x)$.

Assume by inductive hypothesis that such $h_{(a,b)}$ exists for $X_n$. We will show that it also exists for $X_{n+1}$.

Consider the following commutative diagram of schemes

\begin{tikzcd}
X_{n}\otimes_k R \arrow[r, "j"] \arrow[d, "h_{(a,b)}"] & X_{n+1}\otimes_k R \arrow[d, "h_{(a,b)}"] \\
X_{n} \otimes_k R \arrow[r, "j"]  & X_{n+1}\otimes_k R  \\
\end{tikzcd}

Now using the Lemma \ref{exactsequence} and the above commutative diagram we get following commutative diagram

\begin{tikzcd}
H^1_{fl}( X_{n+1}\otimes_k R,I) \arrow[r] \arrow[d,"h_{(a,b)}"] &H^1_{fl}( X_{n+1}\otimes_k R,\mathcal{O}^{\times}
_ {X_{n+1} \otimes_{k} R}) \arrow[r, "j_{\text{in}}"]  \arrow[d,"h_{(a,b)}"] &H^1_{fl}( X_{n+1}\otimes_k R,j_{*}\mathcal{O}
^{\times}_ {X_{n} \otimes_{k} R})  \arrow[d,"h_{(a,b)}"] \\
H^1_{fl}( X_{n+1}\otimes_k R,I) \arrow[r] &H^1_{fl}( X_{n+1}\otimes_k R,\mathcal{O}^{\times}
_ {X_{n+1} \otimes_{k} R}) \arrow[r, "j_{\text{in}}"] &H^1_{fl}( X_{n+1}\otimes_k R,j_{*}\mathcal{O}
^{\times}_ {X_{n} \otimes_{k} R})
\end{tikzcd}

Now we add the map $i:H^1(X_n, \mathcal{O}^{\times}_{X_n}) \to H^1(X_n \otimes_{k} R, \mathcal{O}^{\times}_{X_n})$ and $i:H^1(X_n, I) \to H^1(X_n \otimes_{k} R, I)$ and we get the following commutative diagram

\begin{tikzcd}
H^1_{fl}( X_{n+1},I) \arrow[r] \arrow[d,"i"] &H^1_{fl}( X_{n+1},\mathcal{O}^{\times}
_ {X_{n+1}}) \arrow[r, "j_{\text{in}}"]  \arrow[d,"i"] &H^1_{fl}( X_{n+1},j_{*}\mathcal{O}
^{\times}_ {X_{n}})  \arrow[d,"i"] \\
H^1_{fl}( X_{n+1}\otimes_k R,I) \arrow[r] \arrow[d,"h_{(a,b)}"] &H^1_{fl}( X_{n+1}\otimes_k R,\mathcal{O}^{\times}
_ {X_{n+1} \otimes_{k} R}) \arrow[r, "j_{\text{in}}"]  \arrow[d,"h_{(a,b)}"] &H^1_{fl}( X_{n+1}\otimes_k R,j_{*}\mathcal{O}
^{\times}_ {X_{n} \otimes_{k} R})  \arrow[d,"h_{(a,b)}"] \\
H^1_{fl}( X_{n+1}\otimes_k R,I) \arrow[r] &H^1_{fl}( X_{n+1}\otimes_k R,\mathcal{O}^{\times}
_ {X_{n+1} \otimes_{k} R}) \arrow[r, "j_{\text{in}}"] &H^1_{fl}( X_{n+1}\otimes_k R,j_{*}\mathcal{O}
^{\times}_ {X_{n} \otimes_{k} R})
\end{tikzcd}

Let $x$ be a $p$-torsion element of $H^1_{fl}( X_{n+1},\mathcal{O}^{\times}_{X_{n+1}})$. Then $j_{\text{in}}(x)=x'$ is a $p$-torsion element of $H^1_{fl}( X_{n+1},j_{*}\mathcal{O}^{\times}_{X_{n}})= H^1_{fl}( X_{n},\mathcal{O}^{\times}_{X_{n}}) $. If $j_{\text{in}}(x) \neq 0$ by inductive hypothesis we get $ h_{(a,b)}(i(x')) \neq i(x')$ but this implies $ h_{(a,b)}(i(x)) \neq i(x)$ . Now consider the case when $j_{\text{in}}(x)=0$. Then $ x \in H^1_{fl}( X_{n+1},I)$. Now we have the following exact sequence \ref{closedembedding} we get the following exact sequence

\begin{equation}
0 \to H^1_{fl}( X_{n+1}\otimes_k R,I) \to H^1_{fl}( X_{n+1}\otimes_k R,\mathcal{O}
_ {X_{n+1} \otimes_{k} R}) \to H^1_{fl}( X_{n+1}\otimes_k R,j_{*}\mathcal{O}
_ {X_{n} \otimes_{k} R}) \to 0
\end{equation}

Now combining with the action of $h_{(a,b)}$

We get the following commutative diagram

\begin{tikzcd}
H^1_{fl}( X_{n+1},I) \arrow[r] \arrow[d, "i"] &H^1_{fl}( X_{n+1},\mathcal{O}
_ {X_{n+1}}) \arrow[r] \arrow[d, "i"]  &H^1_{fl}( X_{n+1},j_{*}\mathcal{O}
_ {X_{n}}) \arrow[d, "i"] \\
H^1_{fl}( X_{n+1}\otimes_k R,I) \arrow[r] \arrow[d, "h_{(a,b)}"] &H^1_{fl}( X_{n+1}\otimes_k R,\mathcal{O}
_ {X_{n+1} \otimes_{k} R}) \arrow[r] \arrow[d, "h_{(a,b)}"]  &H^1_{fl}( X_{n+1}\otimes_k R,j_{*}\mathcal{O}
_ {X_{n} \otimes_{k} R}) \arrow[d, "h_{(a,b)}"]\\
H^1_{fl}( X_{n+1}\otimes_k R,I) \arrow[r]  &H^1_{fl}( X_{n+1}\otimes_k R,\mathcal{O}
_ {X_{n+1} \otimes_{k} R}) \arrow[r]  &H^1_{fl}( X_{n+1}\otimes_k R,j_{*}\mathcal{O}
_ {X_{n} \otimes_{k} R})\\
\end{tikzcd}

But we have already seen in the proof of Proposition \ref{alphafixed} that we can find $h_{(a,b)}$ such that for any $x \in H^1_{fl}( X_{n+1},\mathcal{O}
_ {X_{n+1}})$ and $x \neq 0$ we have $h_{(a,b)}(i(x)) \neq i(x)$. So, we're done.

\end{proof}

Now we'll take a look at $H^1_{log_fl}(X_n, \mu_p)$. First we prove the following lemma which we will be used in the computation
\begin{lem}
Let $X_{0} = \operatorname{Proj}(k[x,y,z]/(x+y-z))$. Let log structure on $X_{0}$ be the log structure associated with $D=\{ 0, 1, \infty \}$. Then  $H^1_{log_fl}(X_{0}, \mu_p) = \mathbb{Z}/p\mathbb{Z} \oplus \mathbb{Z}/p\mathbb{Z} $.
\end{lem}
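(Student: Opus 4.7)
\medskip

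The plan is to apply the Leray spectral sequence for the morphism of sites $\epsilon: (X_0)^{\log}_{fl} \to (X_0)^{cl}_{fl}$ to the sheaf $\mu_p$, which yields the five-term exact sequence
\begin{equation*}
0 \to H^1_{fl}(X_0, \mu_p) \to H^1_{logfl}(X_0, \mu_p) \to H^0(X_0, R^1\epsilon_*\mu_p) \xrightarrow{d} H^2_{fl}(X_0, \mu_p).
\end{equation*}
The strategy is to compute each of the outer three terms explicitly and then identify the boundary map $d$ with the ``sum'' map, whose kernel is visibly $(\mathbb{Z}/p\mathbb{Z})^2$.

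For the classical fppf cohomology I use the Kummer sequence $0 \to \mu_p \to \mathbb{G}_m \xrightarrow{p} \mathbb{G}_m \to 0$ on $X_0 = \mathbb{P}^1_k$: since $\mathrm{Pic}(X_0)=\mathbb{Z}$ and $\mathrm{Br}(X_0)=\mathrm{Br}(k)=0$ (as $k$ is algebraically closed), one obtains $H^1_{fl}(X_0,\mu_p)=\mathrm{Pic}(X_0)[p]=0$ and $H^2_{fl}(X_0,\mu_p)=\mathrm{Pic}(X_0)/p\mathrm{Pic}(X_0)\cong\mathbb{Z}/p\mathbb{Z}$, the latter being generated by the class of $\mathcal{O}(1)$. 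Theorem \ref{thmr} gives $R^1\epsilon_*\mu_p = \varinjlim_n \Hom(\mu_n,\mu_p)\otimes_{\mathbb{Z}}(\mathbb{G}_{m,\log}/\mathbb{G}_m)$; since $\Hom(\mu_n,\mu_p)=\mathbb{Z}/p\mathbb{Z}$ when $p\mid n$ with identity transition maps, and vanishes otherwise, the colimit is $\mathbb{Z}/p\mathbb{Z}$, so $R^1\epsilon_*\mu_p \cong \bar M^{\text{gp}}_{X_0}\otimes \mathbb{Z}/p\mathbb{Z}$. On $X_0$ with log structure at $D=\{0,1,\infty\}$, the sheaf $\bar M^{\text{gp}}_{X_0}$ is the skyscraper $\bigoplus_{x\in D} i_{x,*}\mathbb{Z}$, hence $H^0(X_0, R^1\epsilon_*\mu_p) = (\mathbb{Z}/p\mathbb{Z})^3$.

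At this point the five-term sequence reads $0 \to H^1_{logfl}(X_0,\mu_p) \to (\mathbb{Z}/p\mathbb{Z})^3 \xrightarrow{d} \mathbb{Z}/p\mathbb{Z}$, so everything hinges on showing that $d$ is the surjective map $(a_0,a_1,a_\infty)\mapsto a_0+a_1+a_\infty$. I would identify $d$ in two complementary ways. First, the boundary map of the Leray spectral sequence here is the obstruction for a class in $H^0(X_0,\bar M^{\text{gp}}/p)$ to lift to $H^0(X_0,M^{\text{gp}}/p)$; from the short exact sequence $0\to\mathcal O^\times\to M^{\text{gp}}\to\bar M^{\text{gp}}\to 0$, this obstruction lands in $H^1(X_0,\mathcal O^\times)/p=\mathrm{Pic}(X_0)/p=\mathbb{Z}/p\mathbb{Z}$, and sends $(a_0,a_1,a_\infty)$ to the class of $\mathcal O(a_0[0]+a_1[1]+a_\infty[\infty])$, i.e.\ to $\sum a_i\pmod p$. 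Second, as a sanity check and realization argument, I construct enough actual log $\mu_p$-torsors that span the kernel: the cover $X_1\to X_0$ is a $\mu_p\times\mu_p$-torsor by the proposition already proved, and pushing out along the two projections $\mu_p\times\mu_p\to\mu_p$ produces two log $\mu_p$-torsors whose classes in $(\mathbb{Z}/p\mathbb{Z})^3$ are the divisors of $t$ and $t-1$, namely $(1,0,-1)$ and $(0,1,-1)$, generating the sum-zero subgroup.

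The main obstacle is the rigorous identification of $d$ in the first description above; the cleanest way is to run the Leray spectral sequence for $\epsilon$ simultaneously on the two-term complex $\mathcal O^\times\hookrightarrow M^{\text{gp}}$ via the $3\times 3$ diagram of short exact sequences linking $\mu_p$, $\mathcal O^\times$, $M^{\text{gp}}$ and their $p$-th power maps, so that $d$ is read off from the boundary in the classical sequence for $\mathcal O^\times$. Once surjectivity of $d$ is established, the exact sequence forces $H^1_{logfl}(X_0,\mu_p)\cong\ker(d)=\mathbb{Z}/p\mathbb{Z}\oplus\mathbb{Z}/p\mathbb{Z}$, completing the proof.
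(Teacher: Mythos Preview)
Your proof is correct but follows a genuinely different path from the paper's. The paper avoids the Leray spectral sequence entirely: it uses the logarithmic Kummer sequence $0\to\mu_p\to\mathbb{G}_{m,\log}\xrightarrow{p}\mathbb{G}_{m,\log}\to 0$ on $(X_0)^{\log}_{fl}$, together with Kato's ``log Hilbert~90'' result $H^1_{logfl}(X_0,\mathbb{G}_{m,\log})=H^1_{fl}(X_0,\mathbb{G}_{m,\log})$. The latter group is computed from $0\to\mathbb{G}_m\to\mathbb{G}_{m,\log}\to\bigoplus_{x\in D}i_{x,*}\mathbb{Z}\to 0$, where the boundary $\mathbb{Z}^3\to\mathrm{Pic}(X_0)=\mathbb{Z}$ is the sum map; its surjectivity gives $H^1_{fl}(X_0,\mathbb{G}_{m,\log})=0$ and its kernel gives $H^0(X_0,\mathbb{G}_{m,\log})/k^{\times}\cong\mathbb{Z}^2$, so $H^1_{logfl}(X_0,\mu_p)=\operatorname{coker}(p\text{ on }H^0(\mathbb{G}_{m,\log}))=(\mathbb{Z}/p)^2$ directly.

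Your approach trades the log Kummer sequence for the Leray five-term sequence plus the computation of $R^1\epsilon_*\mu_p$ via Theorem~\ref{thmr}. This keeps everything in the classical site, and reuses exactly the machinery the paper deploys later for $X_n$, so it is more uniform with the rest of Section~\ref{sec8partb}. The price is that you must identify the transgression $d_2\colon(\mathbb{Z}/p)^3\to\mathbb{Z}/p$, which---as you note---needs a diagram chase (your $3\times 3$ argument or the divisor-class interpretation). That step is not hard, but the paper's route sidesteps it: there the ``sum map'' appears as a boundary in an ordinary long exact sequence rather than as a spectral-sequence differential, so no separate identification is required. In short, the paper's argument is more self-contained here, while yours dovetails better with the surrounding computations.
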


\begin{proof}
We start with the following exact sequence in the log-flat topology ( Proposition 4.2 in \cite{kato2019logarithmic}).

\begin{equation*}
    0 \to \mu_p \to \mathbb{G}_{m,\log} \to \mathbb{G}_{m,\log} \to 0.
\end{equation*}

This gives us the following long exact sequence
\begin{equation}
\label{kummerlogles}
    0 \to H^0_{log_fl}(X_{0}, \mu_p) \to H^0_{log_fl}(X_{0}, \mathbb{G}_{m,\log}) \to  H^0_{log_fl}(X_{0}, \mathbb{G}_{m,\log}) \to  H^1_{log_fl}(X_{0}, \mu_p) \to H^1_{log_fl}(X_{0}, \mathbb{G}_{m,\log})
\end{equation}

Next we compute $H^1_{log_fl}(X_{0}, \mathbb{G}_{m,\log})$. Using Theorem 5.1 in \cite{kato2019logarithmic}, we get that $H^1_{log_fl}(X_{0}, \mathbb{G}_{m,\log}) = H^1_{fl}(X_{0}, \mathbb{G}_{m,\log}) $.

To compute $ H^1_{fl}(X_{0}, \mathbb{G}_{m,\log})$ we start with the following exact sequence in the flat topology

\begin{equation}
    0 \to \mathbb{G}_{m} \to \mathbb{G}_{m,\log} \to \oplus_{x \in \{0.1, \infty\}} i_{{x}_{*}} \mathbb{Z}
\end{equation}

Here $\mathbb{Z}$ is the constant sheaf on $x$ and $i_{x}: x \xhookrightarrow{} X_{0}$. From the above sequence we get the long exact sequence
\begin{align*}
   & H^{0}_{fl}(X_{0},\mathbb{G}_{m}) \to H^{0}_{fl}(X_{0}, \mathbb{G}_{m,\log}) \to H^{0}_{fl}(X_0, \oplus_{x \in \{0.1, \infty\}} i_{{x}_{*}} \mathbb{Z}) \to  H^{1}_{fl}(X_{0},\mathbb{G}_{m}) \to H^{1}_{fl}(X_{0}, \mathbb{G}_{m,\log}) \\ \to & H^{1}_{fl}(X_{0},\oplus_{x \in \{0.1, \infty\}} i_{{x}_{*}} \mathbb{Z}).  
\end{align*}

Now $H^{1}_{fl}(X_{0},\oplus_{x \in \{0.1, \infty\}} i_{{x}_{*}} \mathbb{Z}) = \oplus_{x \in 
\{0,1, \infty\}} H^{1}_{fl}(X_{0},i_{{x}_{*}} \mathbb{Z})$. Since, $H^{1}_{fl}(x, 
\mathbb{Z}) = 0 $ we get $H^{1}_{fl}(X_{0},\oplus_{x \in \{0.1, \infty\}} i_{{x}_{*}} \mathbb{Z}) = 0$. Now the map 
from $H^{0}_{fl}(X_0, \oplus_{x \in \{0.1, \infty\}} i_{{x}_{*}} \mathbb{Z}) = \mathbb{Z} \oplus \mathbb{Z}   \oplus 
\mathbb{Z} \to  H^{1}_{fl}(X_{0},\mathbb{G}_{m}) = \mathbb{Z}$ is the sum map therefore surjective. Thus it follows 
$H^{1}_{fl}(X_{0}, \mathbb{G}_{m,\log}) = 0$. Also note that $H^{0}_{fl}(X_{0}, \mathbb{G}_{m,\log})/ 
H^{0}_{fl}(X_{0},\mathbb{G}_{m}) = \mathbb{Z} \oplus \mathbb{Z}$. Now from 
\ref{kummerlogles}, we get

$ 0 \to H^0_{log_fl}(X_{0}, \mu_p) \to H^0_{log_fl}(X_{0}, \mathbb{G}_{m,\log}) \to  H^0_{log_fl}(X_{0}, \mathbb{G}_{m,\log}) \to  H^1_{log_fl}(X_{0}, \mu_p) \to 0$

Since $H^{0}_{fl}(X_{0},\mathbb{G}_{m}) = k^{\times}$ and $H^{0}_{fl}(X_{0}, \mathbb{G}_{m,\log})/ H^{0}_{fl}(X_{0},\mathbb{G}_{m}) = \mathbb{Z} \oplus \mathbb{Z}$, we get 
$H^1_{log_fl}(X_{0}, \mu_p) = \mathbb{Z}/p\mathbb{Z} \oplus \mathbb{Z}/p\mathbb{Z}$. 

\end{proof}

Using the Leray Spectral sequence \ref{leray} we get the following exact sequence

\begin{equation} \label{leray}
0 \to H_{fl}^1(X_n, \mu_p ) \to H^1_{log_fl}(X_n, \mu_p) \to H_{fl}^0(X_n, R^{1}\epsilon_{*}(\mu_p) ) 
\end{equation}

Also, let $X_{0} = \operatorname{Proj}(k[x,y,z]/(x+y-z))$.
Consider the map $k[x,y,z]/(x+y-z)^{p^n} \to k[x,y,z]/(x+y-z)$ where $x \to x$, $y \to y$ and $z \to z$. Thus we get a map $j:X_0 \to X_n$. Using the Leray spectral sequence, we get the following commutative diagram

\begin{tikzcd}
H_{fl}^1(X_n, \mu_p )  \arrow[r] \arrow[d,"j"] &H^1_{log_fl}(X_n, \mu_p)  \arrow[r] \arrow[d,"j"] & H_{fl}^0(X_n, R^{1}\epsilon_{*}(\mu_p) ) \arrow[d,"j"]\\
H_{fl}^1(X_0, \mu_p )  \arrow[r]  & H^1_{log_fl}(X_0, \mu_p)  \arrow[r]  & H_{fl}^0(X_0, R^{1}\epsilon_{*}(\mu_p) ) 
\end{tikzcd}

Using Theorem \ref{thmr} we can compute  $H_{fl}^0(X_n, R^{1}\epsilon_{*}(\mu_p) )$ and $H_{fl}^0(X_0, R^{1}\epsilon_{*}(\mu_p) )$ .

\begin{equation}
    R^{1}\epsilon_{*}(\mu_p) = \underset{n \neq 0}{\varinjlim} {\Hom}(\mu_{n}, \mu_p) \otimes_{\mathbb{Z}} (\mathbb{G}_{m,\log} /\mathbb{G}_m ),
\end{equation}

To compute $H_{fl}^0(X_n, \underset{n \neq 0}{\varinjlim} {\Hom}(\mu_{n}, \mu_p) \otimes_{\mathbb{Z}} (G_{m,\log} 
/G_m ) )$, first note that $\mathbb{G}_{m,\log} /\mathbb{G}_m = \bigoplus_{i \in \{1,0, \infty\}} \mathbb{Z}_i$ . 
Now, $\underset{n \neq 0}{\varinjlim} {\Hom}(\mu_{n}, \mu_p) \otimes_{\mathbb{Z}}  (\bigoplus_{i \in \{1,0, \infty\}} \mathbb{Z}_i) = {\Hom}(\mu_p, \mu_p)_{ \otimes_{\mathbb{Z}}} (\bigoplus_{i \in \{1,0, \infty\}} \mathbb{Z}_i)$. Now we 
get ${\Hom}(\mu_p, \mu_p) = {\Hom}(\mu_p, \mathbb{G}_m) = \mathbb{Z}/p\mathbb{Z}$. Therefore, we get $H_{fl}^0(X_n, 
\underset{n \neq 0}{\varinjlim} {\Hom}(\mu_{n}, \mu_p) \otimes_{\mathbb{Z}} (G_{m,\log} /G_m ) ) = \mathbb{Z}/p 
\mathbb{Z} \oplus \mathbb{Z}/p \mathbb{Z} \oplus \mathbb{Z}/p \mathbb{Z}$. Note that the map from $j:H_{fl}^0(X_0, 
R^{1}\epsilon_{*}(\mu_p) ) \to H_{fl}^0(X_0, R^{1}\epsilon_{*}(\mu_p) )$ is an isomorphism. So, we the following commutative diagram .

\begin{tikzcd}
\label{mulog}
H_{fl}^1(X_n, \mu_p )  \arrow[r] \arrow[d,"j"] &H^1_{log_fl}(X_n, \mu_p)  \arrow[r] \arrow[d,"j"] &  \mathbb{Z}/p \mathbb{Z} \oplus \mathbb{Z}/p \mathbb{Z} \oplus \mathbb{Z}/p \mathbb{Z} \arrow[d,"j"]\\
H_{fl}^1(X_0, \mu_p )  \arrow[r]  & H^1_{log_fl}(X_0, \mu_p)  \arrow[r]  &  \mathbb{Z}/p \mathbb{Z} \oplus \mathbb{Z}/p \mathbb{Z} \oplus \mathbb{Z}/p \mathbb{Z}
\end{tikzcd}

Let $I = H_{fl}^1(X_n, \mu_p ) $. Now let $x \in \mathbb{Z}/p \mathbb{Z} \oplus \mathbb{Z}/p \mathbb{Z} \subseteq H^1_{log_fl}(X_n, \mu_p)$ then for all $(a,b) \in \mu_{p^n}(R) \times \mu_{p^n}(R)$ and for all $R$, $h_{(a,b)}(i(x)) =i(x)$. Here $i$ is the canonical map $i:H_{fl}^1(X_n, \mu_p ) \to H_{fl}^1(X_n \otimes_{k}R, \mu_p )$. Therefore from Proposition $\ref{mufixed}$ we get $I \cap \mathbb{Z}/p \mathbb{Z} \oplus \mathbb{Z}/p \mathbb{Z} = 0$. We claim that  $H^1_{log_fl}(X_n, \mu_p) = I \oplus \mathbb{Z}/p \mathbb{Z} \oplus \mathbb{Z}/p \mathbb{Z} $. Since, $H_{fl}^1(X_0, \mu_p ) = 0$ and $H^1_{log_fl}(X_0, \mu_p)= \mathbb{Z}/p \mathbb{Z} \oplus \mathbb{Z}/p \mathbb{Z}$  the image of $H^1_{log_fl}(X_n, \mu_p)$ in $\mathbb{Z}/p \mathbb{Z} \oplus \mathbb{Z}/p \mathbb{Z} \oplus \mathbb{Z}/p \mathbb{Z}$ isomorphic to $ \mathbb{Z}/p \mathbb{Z} \oplus \mathbb{Z}/p \mathbb{Z}$. This follows from the commutativity of the diagram \ref{mulog}. But since $I \oplus \mathbb{Z}/p \mathbb{Z} \oplus \mathbb{Z}/p \mathbb{Z} $ is contained in $H^1_{log_fl}(X_n, \mu_p)$ and  $H^1_{log_fl}(X_n, \mu_p)/I \cong  \mathbb{Z}/p \mathbb{Z} \oplus \mathbb{Z}/p \mathbb{Z}$ ,  we get $H^1_{log_fl}(X_n, \mu_p) = I \oplus \mathbb{Z}/p \mathbb{Z} \oplus \mathbb{Z}/p \mathbb{Z} $.

From the above we get the following proposition

\begin{prop}
\label{mulogfixed}
Let $R$ be a $k$-algebra $R$ then we have a map $i :H^1_{fl}(X_n, \mu_p) \to H^1_{logfl}(X_n \otimes_k R, \mu_p)$. Also for every $(a,b) \in \mu_{p^n}(R) \times \mu_{p^n}(R)$, $h_{(a,b)}$ induces map $h_{(a,b)}:  H^1_{logfl}(X_n \otimes_k R, \mu_p) \to H^1_{logfl}(X_n \otimes_k R, \mu_p) $.

Let $H^1_{logfl}(X_n, \mu_p)^{\mu_{p^n}(R) \times \mu_{p^n}(R)} = \{ x \in H^1_{fl}(X_n, \mu_p):\text{for all } (a,b) \in \mu_{p^n}(R) \times \mu_{p^n}(R)\text{ ,} h_{(a,b)}(i(x)) =i(x) \}$. There exists a $k$-algebra $R$ such that 
\begin{equation*}
H^1_{logfl}(X_n, \mu_p)^{\mu_{p^n}(R) \times \mu_{p^n}(R)} = \mathbb{Z}/p \mathbb{Z} \oplus \mathbb{Z}/p \mathbb{Z}
\end{equation*}
\end{prop}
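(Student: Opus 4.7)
The plan is to leverage the decomposition $H^1_{logfl}(X_n, \mu_p) = I \oplus (\mathbb{Z}/p\mathbb{Z} \oplus \mathbb{Z}/p\mathbb{Z})$ from the paragraph immediately preceding the proposition, where $I = H^1_{fl}(X_n, \mu_p)$ and the second summand equals the image of $H^1_{logfl}(X_n, \mu_p)$ in $H^0(X_n, R^1\epsilon_* \mu_p)$. Since $h_{(a,b)}$ merely rescales the homogeneous coordinates $T_i$ by units of $R$, it acts trivially on $\mathbb{G}_{m,\log}/\mathbb{G}_m$ and hence pointwise on this quotient $\mathbb{Z}/p \oplus \mathbb{Z}/p$. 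Applying $G(R)$-invariants (with $G(R) := \mu_{p^n}(R) \times \mu_{p^n}(R)$) to the resulting short exact sequence gives the upper bound $|H^1_{logfl}(X_n, \mu_p)^{G(R)}| \leq |I^{G(R)}| \cdot p^2$, and the task reduces to finding an $R$ with $I^{G(R)} = 0$ for which $p^2$ fixed classes can be exhibited explicitly.

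For the lower bound I would use the log Kummer sequence $0 \to \mu_p \to \mathbb{G}_{m,\log} \to \mathbb{G}_{m,\log} \to 0$: its connecting map sends the log units $T_1/T_3$ and $T_2/T_3$ in $\mathbb{G}_{m,\log}(X_n)$ to two classes whose images in $H^0(X_n, R^1\epsilon_*\mu_p)$ span exactly the canonical $\mathbb{Z}/p \oplus \mathbb{Z}/p$. Under $h_{(a,b)}$ the section $T_i/T_3$ is rescaled by $a$ or $b$, and by additivity of the connecting map the corresponding Kummer class is shifted by the Kummer torsor of that unit, a class in $R^{\times}/(R^{\times})^p$ pulled back from $\operatorname{Spec} R$. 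These shifts vanish precisely when every element of $\mu_{p^n}(R)$ is a $p$-th power in $R$.

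Accordingly I would take $R = k[u^{1/p^\infty}]/(u^{p^n})$, the ind-algebra obtained from $R_0 := k[u]/(u^{p^n}) = k[t]/(t^{p^n}-1)$ (via $t = 1+u$) by adjoining all iterated $p$-th roots of $u$. Because $k$ is algebraically closed and every $u^{1/p^m}$ lies in $R$, every element of $R$ is a $p$-th power in $R$, so both Kummer classes above are $G(R)$-fixed. Meanwhile $R \supseteq R_0$ still contains the element $(t, 1) \in \mu_{p^n}(R) \times \mu_{p^n}(R)$ used in Proposition \ref{mufixed}, and the key calculation there---that $h_{(t,1)}$ acts on $H^1_{fl}(X_n, \mathbb{G}_a) \otimes_k R$ by sending the monomial $1/(T_0^xT_1^yT_2^z)$ to $t^{-x}$ times itself, nontrivially for $0 < x < p^n$---carries over unchanged, giving $I^{G(R)} = 0$. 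The main obstacle is reconciling these two opposing demands on $R$: Proposition \ref{mufixed} needs non-$p$-th-power elements to witness nontrivial action on $I$, while the log Kummer classes are fixed only when every $\mu_{p^n}(R)$-element is a $p$-th power. The ring $R = k[u^{1/p^\infty}]/(u^{p^n})$ resolves the tension because the non-fixedness argument uses only the single element $t \in R_0 \subseteq R$, while the additional $p$-power roots adjoined to form $R$ do not disturb that argument but supply exactly the divisibility needed to pin the log Kummer classes as fixed.
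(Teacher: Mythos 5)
Your proposal is correct and follows the paper's overall skeleton---the splitting $H^1_{logfl}(X_n,\mu_p)=I\oplus(\mathbb{Z}/p\mathbb{Z}\oplus\mathbb{Z}/p\mathbb{Z})$ with $I=H^1_{fl}(X_n,\mu_p)$, killing $I^{G(R)}$ via Proposition \ref{mufixed}, and showing the complementary $\mathbb{Z}/p\mathbb{Z}\oplus\mathbb{Z}/p\mathbb{Z}$ is fixed---but it diverges on the last step in a way that is not merely cosmetic. The paper asserts without argument that every $x$ in the complement satisfies $h_{(a,b)}(i(x))=i(x)$ for \emph{all} $R$ and all $(a,b)$, and implicitly keeps the ring $R=k[t]/(t^{p^n}-1)$ used in Proposition \ref{mufixed}. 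Your log-Kummer computation shows why that assertion needs care: for the natural generators one has $h_{(a,b)}^*\partial(T_0/T_2)=\partial(a)+\partial(T_0/T_2)$, where $\partial(a)$ is the class of $a$ in $R^{\times}/(R^{\times})^p$ pulled back from $\operatorname{Spec} R$ (injectively, since the structure map has sections); over $R=k[u]/(u^{p^n})$ with $a=t=1+u$ this class is nonzero because $u$ has no $p$-th root there, so the complement is \emph{not} pointwise fixed over the paper's ring. Your choice $R=k[u^{1/p^{\infty}}]/(u^{p^n})$ resolves this cleanly: Frobenius is surjective on this ring, so $\mu_{p^n}(R)\subseteq(R^{\times})^p$ and the shifts $\partial(a),\partial(b)$ vanish, while $u^{j}\neq 0$ for $j<p^n$ so the monomial computation behind Proposition \ref{mufixed} with $(a,b)=(1+u,1)$ still yields $I^{G(R)}=0$; together with the equivariant map to $H^0(X_n,R^1\epsilon_*\mu_p)$ (on which the action is trivial) this pins the invariants to exactly $\mathbb{Z}/p\mathbb{Z}\oplus\mathbb{Z}/p\mathbb{Z}$. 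The one point you should spell out is that the invariant subgroup is not a priori $I^{G(R)}\oplus C^{G(R)}$: a sum $x_1+x_2$ with $x_1\in I$ and $x_2$ in the complement could be fixed if the shift of $x_2$ cancels the motion of $x_1$. With your $R$ the shift of $x_2$ is zero, so $h_{(a,b)}(i(x))-i(x)=h_{(a,b)}(i(x_1))-i(x_1)$ and the dichotomy closes; this also explains why choosing $R$ to satisfy both demands simultaneously, rather than using two different rings, is genuinely necessary. In short, your route buys an actual proof of the fixedness of the log part, which the paper leaves as an unsupported (and, for its own ring, false) claim.
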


Now combining Proposition \ref{zpzfixedlog}, \ref{mulogfixed} and \ref{alphafixedlog} gives us Theorem \ref{thm::main2}. 

\bibliography{log_scheme}

\providecommand{\bysame}{\leavevmode\hbox to3em{\hrulefill}\thinspace}
\providecommand{\MR}{\relax\ifhmode\unskip\space\fi MR }
% \MRhref is called by the amsart/book/proc definition of \MR.
\providecommand{\MRhref}[2]{%
  \href{http://www.ams.org/mathscinet-getitem?mr=#1}{#2}
}
\providecommand{\href}[2]{#2}
\begin{thebibliography}{1}

\bibitem{borne2012parabolic}
Niels Borne and Angelo Vistoli, \emph{Parabolic sheaves on logarithmic
  schemes}, Advances in Mathematics \textbf{231} (2012), no.~3-4, 1327--1363.

\bibitem{borne2015nori}
\bysame, \emph{The nori fundamental gerbe of a fibered category}, Journal of
  Algebraic Geometry \textbf{24} (2015), no.~2, 311--353.

\bibitem{gasbarri2003heights}
Carlo Gasbarri et~al., \emph{Heights of vector bundles and the fundamental
  group scheme of a curve}, Duke Mathematical Journal \textbf{117} (2003),
  no.~2, 287--311.

\bibitem{hagihara2016structure}
Kei Hagihara, \emph{Structure theorem of {K}ummer \'etale {K}-group {II}},
  Documenta Mathematica \textbf{21} (2016), 1345--1396.

\bibitem{illusie2013log}
Luc Illusie, Chikara Nakayama, Takeshi Tsuji, et~al., \emph{On log flat
  descent}, Proceedings of the Japan Academy. Series A. Mathematical Sciences
  \textbf{89} (2013), no.~1, 1--5.

\bibitem{kato1989logarithmic}
Kazuya Kato, \emph{Logarithmic structures of {F}ontaine-{I}llusie, in algebraic
  analysis, geometry, and number theory, proceedings of the jami inaugural
  conference}, Supplement to Amer. J. Math. (1989), 191--224.

\bibitem{kato2019logarithmic}
\bysame, \emph{Logarithmic structures of fontaine-illusie. ii}, arXiv preprint
  arXiv:1905.10678 (2019).

\bibitem{nakayama1997logarithmic}
Chikara Nakayama, \emph{Logarithmic \'etale cohomology}, Mathematische Annalen
  \textbf{308} (1997), no.~3, 365--404.

\bibitem{nori1982fundamental}
Madhav~V Nori, \emph{The fundamental group-scheme}, Proceedings Mathematical
  Sciences \textbf{91} (1982), no.~2, 73--122.

\end{thebibliography}
\bibliographystyle{amsplain}

\end{document}